\def \M {\mathcal{M}}
\newcommand{\abs}[1]{\mathopen\lvert#1\mathclose\rvert}
\newcommand{\bigabs}[1]{\bigl\lvert#1\bigr\rvert}
\newcommand{\Bigabs}[1]{\Bigl\lvert#1\Bigr\rvert}
\newcommand{\biggabs}[1]{\biggl\lvert#1\biggr\rvert}
\newcommand{\norm}[1]{\mathopen\lVert#1\mathclose\rVert}
\newcommand{\bignorm}[1]{\mathopen\big\lVert#1\mathclose\big\rVert}
\newcommand{\Bignorm}[1]{\mathopen\Big\lVert#1\mathclose\Big\rVert}
\newcommand{\N}{{\mathbb N}}
\newcommand{\R}{{\mathbb R}}
\newcommand{\Z}{{\mathbb Z}}
\newcommand{\cH}{\mathcal{H}}
\newcommand{\cM}{\mathcal{M}}
\DeclareMathOperator{\curl}{curl}
\DeclareMathOperator{\supp}{supp}
\DeclareMathOperator{\diam}{diam}
\DeclareMathOperator{\Div}{div}
\DeclareMathOperator{\Curl}{curl}
\DeclareMathOperator{\Trace}{tr}
\DeclareMathOperator{\Int}{int}
\DeclareMathOperator{\Dom}{dom}
\newcommand{\Lebesgue}[1]{\widetilde{#1}}
\newcommand{\apD}[1]{{D}_{\mathrm{ap}}{#1}}
\newcommand{\apDD}[1]{{D}_{\mathrm{ap}}^{2}{#1}}
\DeclareMathOperator*{\aplim}{ap\,lim}
\newcommand{\seminorm}[1]{\mathopen[ #1 \mathclose]}
\newcommand{\bigseminorm}[1]{\bigl[ #1 \bigr]}
\newcommand{\dif}{\,\mathrm{d}}
\newcommand{\la}{_\mathrm{a}}
\newcommand{\ls}{_\mathrm{s}}
\newcommand{\lw}{_\mathrm{w}}
\newcommand{\loc}{_\mathrm{loc}}
\newcommand{\e}{\mathrm{e}}
\newcommand{\meas}[1]{\left| #1 \right|}
\theoremstyle{plain}
\newtheorem{proposition}{Proposition}[section]
\newtheorem{lemma}[proposition]{Lemma}
\newtheorem{theorem}[proposition]{Theorem}
\newtheorem{corollary}[proposition]{Corollary}
\theoremstyle{definition}
\newtheorem{remark}[proposition]{Remark}
\newtheorem{definition}{Definition}[section]
\theoremstyle{remark}
\newtheorem{example}{Example}[section]
\newtheorem*{Claim}{Claim}
\numberwithin{equation}{section}
\title[Critical weak-$L^{p}$ differentiability of singular integrals]{Critical weak-$L^{p}$ differentiability\\of singular integrals}
\author{Luigi Ambrosio}
\address{
Luigi Ambrosio\hfill\break\indent
 Scuola Normale Superiore\hfill\break\indent
 Piazza Cavalieri 7\hfill\break\indent 
 56126 Pisa, Italy}
\email{luigi.ambrosio@sns.it}
\author{Augusto C. Ponce}
\address{
Augusto C. Ponce\hfill\break\indent
 Université catholique de Louvain\hfill\break\indent
 Institut de Recherche en Mathématique et Physique\hfill\break\indent
 Chemin du cyclotron 2, L7.01.02\hfill\break\indent
1348 Louvain-la-Neuve, Belgium}
\email{Augusto.Ponce@uclouvain.be}
\author{Rémy Rodiac}
\address{
Rémy Rodiac\hfill\break\indent
 Université catholique de Louvain\hfill\break\indent
 Institut de Recherche en Mathématique et Physique\hfill\break\indent
 Chemin du cyclotron 2, L7.01.02\hfill\break\indent
1348 Louvain-la-Neuve, Belgium}
\email{Remy.Rodiac@uclouvain.be}
\begin{document}

\begin{abstract}
We establish that for every function $u \in L^1\loc(\Omega)$ whose distributional Laplacian $\Delta u$ is a signed Borel measure in an open set \(\Omega\) in \(\R^{N}\), the distributional gradient \(\nabla u\) is differentiable almost everywhere in \(\Omega\) with respect to the weak-\(L^{\frac{N}{N-1}}\) Marcinkiewicz norm.
We show in addition that the absolutely continuous part of $\Delta u$ with respect to the Lebesgue measure equals zero almost everywhere on the level sets $\{ u= \alpha\}$ and  $\{\nabla u=e \}$, for every $\alpha \in \R$ and $e \in \R^N$. 
Our proofs rely on an adaptation of Calderón and Zygmund's singular-integral estimates inspired by subsequent work by Haj\l asz.
\end{abstract}

\subjclass[2010]{26B05, 26D10, 42B20, 42B37, 46E35}

\keywords{approximate differentiability, convolution products, singular integrals, Calderón-Zygmund decomposition, level sets, Laplacian, finite measures}

\maketitle
%\tableofcontents

\section{Introduction and main results}

Let $\Omega$ be an open set in $\R^N$ with $N\geq 2$.{}
This paper was originally motivated by the following question of H.~Brezis's:
Given \(u \in L^{1}\loc(\Omega)\) whose distributional Laplacian satisfies \(\Delta u \in L^{1}\loc(\Omega)\), is it true that for any $\alpha \in \R$ one has
\begin{equation}
\label{eqQuestion}
\Delta u = 0 
\quad \text{almost everywhere on \(\{ u=\alpha\}\)~?}
\end{equation}
The answer is straightforward when \(\Delta u \in L^{p}\loc(\Omega)\) for some \(1 < p < \infty\), since in this case \(u\) belongs to the Sobolev space \(W^{2, p}\loc(\Omega)\).{}
One then has \(D^{2} u= 0\) almost everywhere on the level set \(\{\nabla u = 0\}\) and the latter contains \(\{u = \alpha\}\) except for a negligible set; see Theorem~4.4 in \cite{EvansGariepy}.
As \(\Delta u = \Trace{(D^{2} u)}\) is the trace of \(D^{2} u\), assertion~\eqref{eqQuestion} is satisfied.

When one merely has \(\Delta u \in L^{1}\loc(\Omega)\), it need not be true that \(u\) belongs to \(W^{2, 1}\loc(\Omega)\).{}
The question above has nevertheless a positive answer that includes its generalization when \(\Delta u\) is merely a measure.{}
By the latter, we mean that there exists a locally finite Borel measure \(\lambda\) in \(\Omega\), possibly signed, such that
\[{}
\int_{\Omega}{u \Delta\varphi}
= \int_{\Omega}{\varphi \dif\lambda}
\quad \text{for every \(\varphi \in C_{c}^{\infty}(\Omega)\)}
\]
and \emph{we always identify \(\Delta u\) with \(\lambda\)}.
One shows in this case that the distributional gradient \(\nabla u\) belongs to \(L^{1}\loc(\Omega; \R^{N})\) and has an approximate derivative  at almost every point \(y \in \Omega\), denoted
\[{}
\apDD{u}(y) \vcentcolon= \apD{(\nabla u)}(y).
\] 
From the definition of the approximate derivative which we recall in \Cref{2} below, \(\apDD{u}(y)\) is a linear transformation from \(\R^{N}\) to \(\R^{N}\).
The fact that \(u\) belongs to the Sobolev space \(W\loc^{1, 1}(\Omega)\) follows from standard elliptic regularity theory, see Theorem~5.1 and Proposition~6.11 in \cite{Ponce2016}, while the existence of the approximate derivative of \(\nabla u\) is a consequence of the Remark on p.~129 by Calderón and Zygmund~\cite{CalderonZygmund}.

The answer to H.~Brezis's question can be seen as a consequence of an identification of \(\Delta u\) in terms of \(\apDD{u}\):

\begin{theorem}\label{th:main1}
If $u \in L^{1}\loc(\Omega)$ is such that $\Delta u$ is a locally finite Borel measure in $\Omega$, 
then the approximate derivative \(\apDD{u}\) satisfies
\begin{equation}
	\label{eqmain1}
	(\Delta u)\la{}
= \Trace{(\apDD{u})} \dif x.
\end{equation}
Hence, for every $\alpha \in \R$ and \(e \in \R^{N}\),
\begin{equation}
	\label{eqmain2}
(\Delta u)\la = 0
\quad \text{almost everywhere on \(\{ u = \alpha\} \cup\{\nabla u = e\}\).}
\end{equation}
\end{theorem}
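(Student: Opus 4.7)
I would prove \eqref{eqmain1} pointwise almost everywhere and then derive \eqref{eqmain2} from classical level-set results. Given \eqref{eqmain1}, Theorem~4.4 of \cite{EvansGariepy} implies \eqref{eqmain2}: it gives $\nabla u = 0$ almost everywhere on $\{u = \alpha\}$, so up to a null set this level set is contained in $\{\nabla u = 0\}$; applied componentwise to the Sobolev vector field $\nabla u$, the same theorem gives $\apD(\nabla u) = 0$, hence $\Trace(\apDD u) = 0$, almost everywhere on each $\{\nabla u = e\}$. Substituting into \eqref{eqmain1} yields \eqref{eqmain2}, so the core task is \eqref{eqmain1}.

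\textbf{Setup.} Write the Lebesgue decomposition $\Delta u = f \dif x + (\Delta u)\ls$ with $f \in L^1\loc(\Omega)$. By the Lebesgue--Besicovitch differentiation theorem for Radon measures, together with the main weak-$L^{\frac{N}{N-1}}$ differentiability theorem stated in the abstract, at almost every $y \in \Omega$ one has simultaneously
\begin{equation*}
\lim_{r \to 0} \frac{\Delta u(B_r(y))}{|B_r(y)|} = f(y), \qquad \lim_{r \to 0} \frac{|\Delta u|\ls(B_r(y))}{|B_r(y)|} = 0,
\end{equation*}
and the approximate Hessian $A \vcentcolon= \apDD u(y)$ exists with the sharp quantitative bound
\begin{equation*}
\bignorm{\nabla u(\cdot) - \nabla u(y) - A(\cdot - y)}_{L^{\frac{N}{N-1},\infty}(B_r(y))} = o(r^N) \quad \text{as } r \to 0.
\end{equation*}
The plan is to show $f(y) = \Trace(A)$ at every such $y$, which gives \eqref{eqmain1}.

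\textbf{Gauss--Green on a good sphere.} Since weak-$L^{\frac{N}{N-1}}$ embeds continuously into $L^1$ on a ball with the Marcinkiewicz estimate $\norm{g}_{L^1(B_r)} \leq C r \norm{g}_{L^{\frac{N}{N-1},\infty}(B_r)}$, the bound above upgrades to the strong $L^1$ estimate
\begin{equation*}
\int_{B_r(y)} \bigabs{\nabla u(x) - \nabla u(y) - A(x-y)} \dif x = o(r^{N+1}).
\end{equation*}
A polar-coordinates/Fubini argument then produces, for each small $r$, a radius $r' \in (r/2, r)$ with $\nabla u$ integrable on $\partial B_{r'}(y)$ and
\begin{equation*}
\int_{\partial B_{r'}(y)} \bigabs{\nabla u - \nabla u(y) - A(\cdot - y)} \dif\cH^{N-1} = o(r^N).
\end{equation*}
The divergence theorem for $W^{1,1}\loc$ fields with measure divergence, valid at almost every radius, yields $\Delta u(B_{r'}(y)) = \int_{\partial B_{r'}(y)} \nabla u \cdot \nu \dif\cH^{N-1}$. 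Subtracting the constant contribution (whose surface integral vanishes) and the linear contribution (whose surface integral equals $\Trace(A)\, |B_{r'}(y)|$, by Gauss--Green applied to the smooth field $x \mapsto A(x-y)$) leaves an $o(r^N)$ remainder. Hence $\Delta u(B_{r'}(y))/|B_{r'}(y)| \to \Trace(A)$ along these radii, and comparison with the Besicovitch limit gives $f(y) = \Trace(A)$.

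\textbf{Main obstacle.} The crux is promoting the ball-average control of $\nabla u - \nabla u(y) - A(\cdot - y)$ to an integrable trace on a \emph{single}, well-chosen sphere. Bare approximate differentiability (convergence in measure on balls) is too weak for this Fubini extraction; what carries the argument through is precisely the sharp weak-$L^{\frac{N}{N-1}}$ quantitative bound --- the main technical input of the paper, built on Haj\l asz's variant of the Calder\'on--Zygmund decomposition. Justifying the Gauss--Green identity on almost every sphere for $u \in W^{1,1}\loc(\Omega)$ with measure Laplacian is an auxiliary but necessary technical point that must be handled with care.
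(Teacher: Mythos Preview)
Your argument for \eqref{eqmain1} is sound but follows a genuinely different route from the paper's. The paper does \emph{not} invoke the weak-$L^{\frac{N}{N-1}}$ differentiability of $\nabla u$ to prove \Cref{th:main1}; instead it localises with a cutoff $\varphi$, represents $u\varphi$ as a Newtonian potential $E*\mu$ modulo a harmonic correction, and then proves $\mu\la = -\Trace(\apDD(E*\mu))\dif x$ (\Cref{propositionEqualityLaplacian}) by an \emph{approximation} argument: the identity is classical for smooth densities, and the weak-$L^{1}$ estimate of $\apDD(E*\mu)$ from \Cref{theoremADSingularIntegral} is what allows one to pass to the limit, first for $L^{1}$ densities and then for singular measures via inner approximation by compact null supports. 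Your Gauss--Green computation on well-chosen spheres is more direct and pointwise, but it consumes the stronger differentiability statement (in fact only its $L^{1}$ consequence suffices, which is the Alberti--Bianchini--Crippa result); the paper deliberately avoids this, remarking that \Cref{theoremADSingularIntegralAlternative} alone is enough for \Cref{th:main1}. Your route is thus closer in spirit to the alternative proofs of Marano--Mosconi and Raita that the introduction cites.

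One correction in your deduction of \eqref{eqmain2}: you call $\nabla u$ a ``Sobolev vector field'' and apply Theorem~4.4 of \cite{EvansGariepy} to it componentwise. In general $\nabla u\notin W^{1,1}\loc(\Omega)$ when $\Delta u$ is merely a measure---this is precisely the difficulty the paper is built around---so that theorem is not available. What you need instead is the level-set property \eqref{eqDensityPoint_ter} for approximately differentiable maps: since $\nabla u$ is approximately differentiable almost everywhere, $\apD(\nabla u)=0$ almost everywhere on each $\{\nabla u=e\}$ follows directly from Lebesgue's density theorem, and this is exactly how the paper argues in \Cref{sectionProofThm}.
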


Here, \((\Delta u)\la\) is the absolutely continuous part of \(\Delta u\) with respect to the Lebesgue measure \(\mathrm{d} x\). 
In particular, when \(\Delta u \in L^{1}\loc(\Omega)\), one has 
\[{}
\Delta u = \Trace{(\apDD{u})} 
\quad \text{almost everywhere in \(\Omega\)}
\]
and \eqref{eqQuestion} holds.

Assertion~\eqref{eqmain2} follows from \eqref{eqmain1} and a standard property of the approximate derivative on level sets; see \eqref{eqDensityPoint_ter}.
Marano and Mosconi~\cite{MaranoMosconi2015} have established \eqref{eqmain2} using alternatively the $L^1$-differentiability of \(\nabla u\) by Alberti, Bianchini and Crippa~\cite{AlbertiBianchiniCrippa2014a} in the sense of \eqref{eqLpDifferentiability} below.
Identity~\eqref{eqmain1} has been proved independently by Raita~\cite{Raita2017}.
His proof also relies on~\cite{AlbertiBianchiniCrippa2014a} and includes a generalization to elliptic operators of any integer order; see also~\cite{GmeinederRaita}.

One might wonder whether \Cref{th:main1} is a consequence of a stronger locality property of the divergence of vector fields, namely
the absolutely continuous part of the divergence of a vector field being \(0\) almost everywhere on level sets of the
vector field itself.
A simple application of Alberti's Lusin-type theorem \cite{Alberti} shows that such a property is not true for general vector fields, despite the fact that it holds for distributional gradients of Sobolev functions:

\begin{example}\label{rem:alberti}
Let $\Omega$ be a bounded open set in $\R^2$ and let us consider the vector field $V(x_1,x_2)=(x_2, -x_1)$.
For every $\phi\in C^1_c(\Omega)$, the continuous vector field
\[
W = (V - \nabla\phi)^\perp,
\]
where \((a_{1}, a_{2})^{\perp} \vcentcolon= (-a_{2}, a_{1})\), satisfies
\[{}
\Div{W}
= 2\dif x
\quad \text{in the sense of distributions in \(\Omega\)}. 
\]
This is a consequence of Schwarz's theorem which implies that \(\Div{(\nabla\phi)^\perp} = 0\) in the sense of distributions in \(\Omega\) for any \(\phi\).
By Alberti's theorem~\cite{Alberti}, for any $0 < \epsilon < \abs{\Omega}$ we can find some $\phi\in C^1_c(\Omega)$ such that the Lebesgue measure of \(\{\nabla\phi\neq V\}\) is less than \(\epsilon\).{}
With such a choice of \(\phi\), the set $\{V-\nabla\phi=0\}$ where the vector field \(W\) equals \(0\) has positive Lebesgue measure.
\end{example}

Our strategy to prove identity~\eqref{eqmain1} has been inspired by the work~\cite{Hajlasz1996} of Haj\l asz's that  has some analogy with the pointwise estimates by Calderón and Zygmund~\cite{CalderonZygmund1961} and De~Vore and Sharpley~\cite{DeVoreSharpley}; see also \cite{KuusiMingione}.
It provides some reinvigorating insight concerning existence of the approximate derivative in connection with the theory of singular integrals.
To this end, we first observe that for a smooth homogeneous kernel \(K\) of order \(-(N-1)\) in \(\R^{N} \setminus \{0\}\), that is
\[
K(x)
= \frac{1}{\abs{x}^{N-1}} K\Bigl( \frac{x}{\abs{x}} \Bigr) 
\quad \text{for every \(x \in \R^{N} \setminus \{0\}\),}
\]
the convolution \(K * \mu\) is defined almost everywhere in \(\R^{N}\) for every finite Borel measure \(\mu\) in \(\R^{N}\).{}
More precisely, the complement of the set
\[{}
\Dom{(K*\mu)}
\vcentcolon= \biggl\{ x \in \R^{N} : \int_{\R^{N}}\abs{K(x - y)} \dif\abs{\mu}(y) < \infty  \biggr\}
\]
is negligible with respect to the Lebesgue measure.

Note that \(K * \mu\) belongs to \(L^{1}\loc(\R^{N})\), but need not have a distributional gradient in \(L^{1}\loc (\R^{N}; \R^{N})\). 
Haj\l asz made the observation that a singular-integral estimate of \((\nabla K) * \mu\) can be formulated in terms of a Lipschitz-type estimate of \(K * \mu\) with variable coefficient.
Existence almost everywhere of the approximate derivative \(\apD{(K * \mu)}\) can then be straightforwardly obtained using Rademacher's theorem.

This approach applies more generally to kernels \(K\) that satisfy
\begin{equation}
	\label{eqKernel}
\abs{K(x)}
\le \frac{A}{\abs{x}^{N-1}}
\quad \text{and} \quad
\abs{D^{2} K(x)}
\le \frac{B}{\abs{x}^{N+1}}
\quad \text{for every \(x \in \R^{N} \setminus \{0\}\),}
\end{equation}
where \(A, B > 0\) are constants.
We prove a quantitative version of Calderón-Zygmund's approximate-differentiability property, which can be written as

\begin{theorem}\label{theoremADSingularIntegralAlternative}
	Let \(K \in C^{2}(\R^{N} \setminus \{0\})\) be any function that satisfies \eqref{eqKernel}.{}
	If \(\mu\) is a finite Borel measure in $\R^N$, then there exists a measurable function \(I : \R^{N} \to [0, \infty]\) in the Marcinkiewicz space \(L^{1}\lw(\R^{N})\) of weak-\(L^{1}\) functions such that
	\[
	\abs{K * \mu(x) - K * \mu(y)}
	\le (I(x) + I(y)) \abs{x - y}
	\quad \text{for every \(x, y \in \Dom{(K * \mu)}\)}
	\]
	and
	\[
	\seminorm{I}_{L^{1}\lw(\R^{N})}
	\le C \norm{\mu}_{\cM(\R^{N})},
	\]
	for some constant \(C > 0\) depending on \(A\), \(B\) and \(N\).
	Hence, \(K * \mu\) is approximately differentiable almost everywhere in \(\R^{N}\) and
	\[
	\abs{\apD{(K * \mu)}}
	\le 2 I
	\quad \text{almost everywhere in \(\R^{N}\).}
	\]
\end{theorem}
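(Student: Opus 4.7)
The plan is to follow the Haj\l asz-inspired strategy indicated in the introduction: construct a single measurable function $I$ that dominates all Lipschitz-type deviations of $K*\mu$, establish a weak-$L^{1}$ estimate for $I$ via a Calderón-Zygmund decomposition of $\mu$, and then deduce approximate differentiability of $K*\mu$ by restricting to each super-level set $\{I\le\lambda\}$ and invoking Rademacher's theorem. As a preliminary step I would extract from the hypotheses on $K$ and $D^{2}K$ the intermediate bound $|\nabla K(x)|\le C(A,B,N)/|x|^{N}$, obtained by Taylor-comparing $K(x+h)$ with $K(x)$ for $|h|\le|x|/2$ and using both size bounds.

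For the candidate, set
\[
I(x):=C_{1}\,\cM|\mu|(x)+C_{2}\,T^{*}\mu(x),
\]
where $\cM|\mu|$ is the centered Hardy-Littlewood maximal function of $|\mu|$ and $T^{*}\mu(x):=\sup_{\rho>0}\Bigabs{\int_{|x-z|>\rho}\nabla K(x-z)\dif\mu(z)}$ is the maximal truncated singular integral associated with $\nabla K$. The estimate $\seminorm{\cM|\mu|}_{L^{1}\lw(\R^{N})}\le c_{N}\norm{\mu}_{\cM(\R^{N})}$ is classical, via a Vitali covering of the super-level sets. The analogous bound $\seminorm{T^{*}\mu}_{L^{1}\lw(\R^{N})}\le c\norm{\mu}_{\cM(\R^{N})}$ is the delicate step: one decomposes $|\mu|$ at height $t$ into a good part of density $\lesssim t$ and a bad part concentrated on a union of cubes $\{Q_{j}\}$ with $\sum|Q_{j}|\lesssim\norm{\mu}_{\cM(\R^{N})}/t$, and uses the hypothesis $|D^{2}K|\le B/|x|^{N+1}$ as a Hörmander-type condition to control $T^{*}$ of the bad part outside $\bigcup_{j}2Q_{j}$.

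For the pointwise inequality, fix $x,y\in\Dom(K*\mu)$, set $r:=|x-y|$, and split
\begin{align*}
K*\mu(x)-K*\mu(y)&=\int_{B(x,2r)}(K(x-z)-K(y-z))\dif\mu(z)\\
&\quad{}+\int_{\R^{N}\setminus B(x,2r)}(K(x-z)-K(y-z))\dif\mu(z).
\end{align*}
The first integral is estimated by $|K(x-z)|\le A|x-z|^{-(N-1)}$ and $|K(y-z)|\le A|y-z|^{-(N-1)}$ combined with the layer-cake identity $\int_{B(w,\rho)}|w-z|^{-(N-1)}\dif|\mu|(z)\lesssim\rho\,\cM|\mu|(w)$ applied at $w=x$ and, via $B(x,2r)\subset B(y,3r)$, at $w=y$, producing an $O\bigl(r(\cM|\mu|(x)+\cM|\mu|(y))\bigr)$ contribution. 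For the second integral, Taylor-expand
\[
K(y-z)-K(x-z)=\nabla K(x-z)\cdot(y-x)+R(z),
\]
with $|R(z)|\le Br^{2}/|x-z|^{N+1}$, valid because the segment from $x-z$ to $y-z$ stays at distance at least $|x-z|/2$ from the origin. The linear term integrates to $(y-x)\cdot\int_{|x-z|\ge 2r}\nabla K(x-z)\dif\mu(z)$, bounded in absolute value by $rT^{*}\mu(x)$; the quadratic remainder is handled by the layer-cake identity at the higher exponent $N+1$, where $\int_{|x-z|\ge 2r}\dif|\mu|(z)/|x-z|^{N+1}\lesssim\cM|\mu|(x)/r$ is an absolutely convergent dyadic sum, giving a further $O(r\,\cM|\mu|(x))$ contribution with no logarithmic loss.

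Combining the two pieces yields $|K*\mu(x)-K*\mu(y)|\le(I(x)+I(y))|x-y|$ for every $x,y\in\Dom(K*\mu)$. On each super-level set $\{I\le\lambda\}$ the restriction of $K*\mu$ is $2\lambda$-Lipschitz; extending by McShane to all of $\R^{N}$ and applying Rademacher's theorem gives classical differentiability almost everywhere on $\{I\le\lambda\}$, hence approximate differentiability of $K*\mu$ at every Lebesgue density point of that set with $|\apD(K*\mu)|\le 2I$. Letting $\lambda\uparrow\infty$ exhausts $\{I<\infty\}$, a set of full measure since $I\in L^{1}\lw(\R^{N})$. The principal obstacle is the weak-$(1,1)$ estimate for $T^{*}\mu$ in this measure-valued setting: the input is a possibly singular finite Borel measure and the kernel $\nabla K$ is assumed only to satisfy pointwise size and derivative bounds, with no cancellation or homogeneity available, so the Calderón-Zygmund decomposition must be executed directly at the level of $|\mu|$, the derivative bound on $K$ supplying the Hörmander-type smoothness needed to make the argument go through.
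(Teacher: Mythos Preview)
Your approach is correct but takes a genuinely different route from the paper's. You build the single coefficient $I=C_{1}\cM|\mu|+C_{2}T^{*}\mu$ directly and prove the pointwise Lipschitz inequality by a near/far splitting of the convolution, then appeal to the weak-$(1,1)$ bound for the maximal truncated singular integral $T^{*}$ on finite measures. The paper instead never introduces $T^{*}$: for each level $t>0$ it performs a Calder\'on--Zygmund decomposition $\mu=g\,\mathrm{d}x+\sum_{n}b_{n}$, obtains a \emph{level-dependent} coefficient $I_{t}$ (the good part handled via $L^{2}$-boundedness of $\nabla(K*\cdot)$ and \Cref{MaximalandSobolev}, the bad part via the elementary decay estimates of \Cref{propositionDifficultCase}), shows $|\{I_{t}>t\}|\le C\|\mu\|_{\cM}/t$, and then applies an abstract \emph{uniformization lemma} (\Cref{4'}) that manufactures a single $H\in L^{1}\lw$ out of the family $(I_{t})_{t>0}$ by setting $H=\sup_{n\in\Z}2^{n+2}\chi_{\{I_{2^{n}}>2^{n}\}}$.

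Your version gives an explicit, canonical $I$ and is shorter once the weak-$(1,1)$ bound for $T^{*}\mu$ is granted; the paper's version is more self-contained, since it avoids Cotlar-type control of the maximal truncation altogether and relies only on the $L^{2}$ Fourier estimate of \Cref{L2caseNew} plus direct pointwise bounds. One point to be careful about in your sketch: the weak-$(1,1)$ estimate for $T^{*}$ does not follow from the H\"ormander condition alone---you still need an $L^{2}$ input for the good part (e.g.\ via Cotlar's inequality $T^{*}g\lesssim \cM(Tg)+\cM g$), which in turn uses exactly the Fourier argument of \Cref{L2caseNew}; and for $T^{*}$ of the bad part outside $\bigcup 2Q_{j}$ one must also dispose of the cubes that straddle $\partial B(x,\rho)$, where the zero-mean cancellation is unavailable. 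Both issues are standard, but your closing remark that ``no cancellation or homogeneity'' is needed slightly understates what goes into the $T^{*}$ bound.
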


We recall that the total-variation norm of a finite Borel measure \(\mu\)  in an open set \(\Omega\) is
\[{}
\norm{\mu}_{\cM(\Omega)}
\vcentcolon= \abs{\mu}(\Omega)
= \int_{\Omega}{\dif\abs{\mu}}
\]
and, for every \(1 \le p < \infty\), the weak-\(L^{p}\) Marcinkiewicz quasinorm of a measurable function \(f\) in \(\Omega\) is  
\[{}
\seminorm{f}_{L^{p}\lw(\Omega)}
\vcentcolon= \sup{\bigl\{\, t \, \bigabs{\{\abs{f} > t\}}^{\frac{1}{p}} : t>0 \bigr\}},
\]
where \(\abs{E}\) denotes the Lebesgue measure of \(E\).

The choice of \(I\) is far from being unique or canonical.
Compared to~\cite[Lemma 9]{Hajlasz1996}, our function \(I\) satisfies a true weak-\(L^{1}\) estimate in the entire space \(\R^{N}\) that comes from a uniformization principle in \Cref{4'} below.{}
Such a global property was not stated nor needed in \cite{Hajlasz1996}, whose focus was on the existence of the approximate derivative of \(K * \mu\).{}
In our case, the identification of \(\Trace{(\apDD{u})}\) as the absolutely continuous part of \(\Delta u\) relies on an approximation argument based on the weak-\(L^{1}\) estimate of \(\apDD{u}\).{}

Under the additional assumption that \(\mu\) belongs to \(L^{p}(\R^{N})\) for some \(1 < p < \infty\), by standard singular-integral estimate of \((\nabla K) * \mu\) the distributional gradient \(\nabla(K * \mu)\) belongs to \(L^{p}(\R^{N}; \R^{N})\). 
In such a case, \(K * \mu\) satisfies a Lipschitz-type estimate as above with an explicit coefficient \(I\) in \(L^{p}(\R^{N})\) that involves the maximal function \(\cM\abs{\nabla(K * \mu)}\), see \eqref{eqChoiceILp}, with
\[{}
\norm{I}_{L^{p}(\R^{N})}
\le C' \norm{\mu}_{L^{p}(\R^{N})}.
\]

Although \Cref{theoremADSingularIntegralAlternative} is enough for proving \Cref{th:main1}, there is a notion which is stronger than approximate differentiability and is adapted to \(L^{p}\loc\) functions, namely \(L^{p}\) differentiability.
The goal is to determine whether at points \(y \in \R^{N}\) one has
\begin{equation}
	\label{eqLpDifferentiability}
\lim_{r \to 0}{\, \frac{1}{r} \, \frac{\norm{v - T_{y}^{1}v}_{L^{p}(B_{r}(y))}}{\norm{1}_{L^{p}(B_{r}(y))}}}
= \lim_{r \to 0}{\, \frac{1}{r} \biggl(\fint_{B_{r}(y)}{\abs{v - T_{y}^{1}v}^{p}} \biggr)^{\frac{1}{p}}}
=  0,
\end{equation}
where \(\fint_{B_{r}(y)}\) denotes the average integral over the ball and  \(T_{y}^{1}v\) is the first-order Taylor approximation of \(v\) at \(y\) defined by
\[{}
T_{y}^{1}v(x)
\vcentcolon= v(y) + \apD{v}(y)[x - y]
\quad \text{for every \(x \in \R^{N}\),}
\]
provided that the approximate derivative \(\apD{v}(y)\) exists.

A fundamental result by Calderón and Zygmund~\cite[Theorem~12]{CalderonZygmund1961}, see also \cite[Theorem~6.2]{EvansGariepy}, asserts that every \(v \in W^{1, 1}\loc(\R^{N})\) is \(L^{\frac{N}{N-1}}\)-differentiable at almost every point \(y \in \R^{N}\) and 
\begin{equation}
	\label{eqIdentificationApproximateDerivative}
\apD{v}(y)[h] = \nabla v(y) \cdot h
\quad \text{for every \(h \in \R^{N}\).}{}
\end{equation}
Under the assumptions of \Cref{theoremADSingularIntegralAlternative}, it may happen that \(K * \mu\) does not belong to \( W^{1, 1}\loc(\R^{N})\) and just misses the imbedding in \(L^{\frac{N}{N-1}}\loc(\R^{N})\).
Such an example is given by \(K(z) = 1/\abs{z}^{N - 1}\) and \(\mu = \delta_{a}\) with \(a \in \R^{N}\).{}
Nonetheless, Alberti, Bianchini and Crippa prove in~\cite{AlbertiBianchiniCrippa2014a} that $K * \mu$ is always $L^p$-differentiable almost everywhere in $\R^{N}$ in the range $1 \leq p < \frac{N}{N-1}$.

Concerning the critical exponent \(p = \frac{N}{N-1}\), a variant of Young's inequality easily implies that \(K * \mu\) belongs to the Marcinkiewicz space \(L^{\frac{N}{N-1}}\lw(\R^{N})\), which is locally contained in all \(L^{p}\)~spaces for \(1 \le p < \frac{N}{N-1}\).{}
One may thus wonder whether there is some notion of differentiability that could handle such a critical imbedding.{}
The answer is affirmative and is our next

\begin{theorem}
	\label{theoremDifferentiability}
	If \(K \in C^{2}(\R^{N} \setminus \{0\})\) satisfies \eqref{eqKernel} and \(\mu\) is a finite Borel measure in \(\R^{N}\), then \(K * \mu\) is weak-\(L^{\frac{N}{N-1}}\) differentiable at almost every point \(y \in \R^{N}\) in the sense that
	\[{}
	\lim_{r \to 0}{\, \frac{1}{r} \, \frac{\seminorm{K * \mu - T_{y}^{1}(K * \mu)}_{L^{\frac{N}{N-1}}\lw(B_{r}(y))}}{\seminorm{1}_{L^{\frac{N}{N-1}}\lw(B_{r}(y))}}} = 0.
	\]
	\end{theorem}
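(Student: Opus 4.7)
The plan combines the Lipschitz-type estimate of \Cref{theoremADSingularIntegralAlternative} with a scale-dependent decomposition of \(\mu\) and the endpoint weak-\(L^{\frac{N}{N-1}}\) Young inequality for the Riesz-type kernel \(K\). Let \(u := K * \mu\) and fix \(y\) in the full-measure set where \(\apD{u}(y)\) exists, \(I(y) < \infty\), and \(y\) is a Lebesgue point of the Radon-Nikodym derivative of \(\abs{\mu}\). Since \(\seminorm{1}_{L^{\frac{N}{N-1}}\lw(B_{r}(y))} = \abs{B_{r}(y)}^{\frac{N-1}{N}}\) is comparable to \(r^{N-1}\), the goal is to show \(\seminorm{u - T_{y}^{1}u}_{L^{\frac{N}{N-1}}\lw(B_{r}(y))} = o(r^{N})\) as \(r \to 0\).

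For each \(r > 0\) I split \(\mu = \mu_{r} + \mu^{r}\) with \(\mu_{r} := \mu|_{B_{2r}(y)}\), and accordingly \(u = K*\mu_{r} + K*\mu^{r}\). The far piece \(K*\mu^{r}\) is \(C^{2}\) on \(B_{r}(y)\), and the hypothesis \(\abs{D^{2}K(x)} \leq B/\abs{x}^{N+1}\) together with a dyadic estimate of \(\int_{\abs{z-y}>2r}\dif\abs{\mu}(z)/\abs{z-y}^{N+1}\) at the density point \(y\) yields \(\norm{K*\mu^{r} - T_{y}^{1}(K*\mu^{r})}_{L^{\infty}(B_{r}(y))} \leq C r\), hence a contribution of order \(r^{N}\) to the weak seminorm on \(B_{r}(y)\). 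For the near piece \(K*\mu_{r}\), the weak-\(L^{\frac{N}{N-1}}\) Young inequality for Riesz-type kernels gives \(\seminorm{K*\mu_{r}}_{L^{\frac{N}{N-1}}\lw(\R^{N})} \leq C\norm{\mu_{r}}_{\cM} = O(r^{N})\), while \Cref{theoremADSingularIntegralAlternative} applied directly to \(\mu_{r}\) provides an \(L^{1}\lw\) Lipschitz coefficient \(I_{r}\) with \(\seminorm{I_{r}}_{L^{1}\lw} \leq C \norm{\mu_{r}}_{\cM}\).

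Both contributions are only \(O(r^{N})\), so the missing \(o(1)\) factor has to come from cancellation between the two first-order Taylor polynomials \(T_{y}^{1}(K*\mu_{r})\) and \(T_{y}^{1}(K*\mu^{r})\). This is made precise by the principal-value identity (valid at Lebesgue points of \(\nabla u\))
\[
	u(x) - T_{y}^{1}u(x) = \int \bigl[ K(x-z) - K(y-z) - \nabla K(y-z) \cdot (x-y) \bigr]\,\dif\mu(z),
\]
combined with the sharper Taylor-remainder bound \(\abs{K(x-z) - K(y-z) - \nabla K(y-z) \cdot (x-y)} \leq C\abs{x-y}^{2}/\abs{y-z}^{N+1}\) on the \emph{variable-scale} region \(\{\abs{y-z} > 2\abs{x-y}\}\) rather than on the fixed-scale \(\{\abs{y-z} > 2r\}\). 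This recovers the additional decay from the annular region \(\{2\abs{x-y} \leq \abs{y-z} \leq 2r\}\) that was lost in the crude near/far split and, combined with the vanishing of \(\abs{\mu}(B_{s}(y))/s^{N}\) at density points, converts the two \(O(r^{N})\) estimates into the required \(o(r^{N})\).

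The main obstacle will be implementing this cancellation directly in the weak-\(L^{\frac{N}{N-1}}\) seminorm: the level sets \(\{\abs{u - T_{y}^{1}u} > t\} \cap B_{r}(y)\) must be controlled uniformly in \(t > 0\), with the \(L^{\infty}\) bound from the sharpened far part, the global weak-\(L^{\frac{N}{N-1}}\) bound on \(K*\mu_{r}\), and the \(L^{1}\lw\) Lipschitz bound from \Cref{theoremADSingularIntegralAlternative} each contributing in distinct regimes of \(t\). Once these pieces are assembled, taking \(r \to 0\) along the density conditions on \(\mu\) and on \(I\) gives the weak-\(L^{\frac{N}{N-1}}\) differentiability at \(y\).
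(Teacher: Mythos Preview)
Your proposal has a genuine gap at the decisive step. You assert that ``the vanishing of $|\mu|(B_s(y))/s^N$ at density points'' will upgrade the two $O(r^N)$ bounds to $o(r^N)$, but this is false: by Besicovitch differentiation, $|\mu|(B_s(y))/|B_s(y)|$ converges at almost every $y$ to the density of the absolutely continuous part of $|\mu|$, and that limit is generically nonzero. Consequently $\|\mu_r\|_{\cM} = |\mu|(B_{2r}(y))$ is only $O(r^N)$, the weak-Young bound on $K*\mu_r$ remains $O(r^N)$ for all small $r$, and the ``cancellation between the two first-order Taylor polynomials'' you invoke is never actually implemented; you yourself flag it as ``the main obstacle''. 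There is a related difficulty you do not address: your split $\mu = \mu_r + \mu^r$ depends on $r$, so the objects $T_y^1(K*\mu_r)$ and $T_y^1(K*\mu^r)$ are not fixed affine functions but $r$-dependent, and the principal-value identity you write down presupposes that $\apD(K*\mu)(y)$ equals $\int \nabla K(y-z)\,d\mu(z)$, an identification that is itself part of what has to be proved.

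The paper's proof avoids all of this by a different decomposition. It fixes a level $t>0$ and performs the Calder\'on--Zygmund decomposition $\mu = g\,dx + \sum_n b_n$, which is \emph{independent of $r$}. The good part $g$ lies in $L^1\cap L^\infty$, so $K*g\in W^{1,p}\loc$ for every $p<\infty$ and is $L^{N/(N-1)}$-differentiable almost everywhere by the classical Calder\'on--Zygmund theorem; this is where the absolutely continuous part of $\mu$ is handled, and it requires genuine Sobolev regularity, not just a weak-$L^{N/(N-1)}$ Young bound. Each $K*b_n$ is smooth on the closed set $F=\{\cM\mu\le t\}$ and hence differentiable there. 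The actual work is a uniform estimate of the tail $\sum_{n\ge J}\|K*b_n - T_y^1(K*b_n)\|_{L^{N/(N-1)}\lw(B_r(y))}$, obtained by splitting the cubes into ``far'' and ``close'' classes and invoking the Marcinkiewicz integral together with the fact that $|\mu|(B_{\gamma r}(y)\setminus F)/r^N\to 0$; this quantity \emph{is} $o(1)$ because $F$ has full Lebesgue density at almost every $y\in F$, in contrast with your incorrect claim about $|\mu|(B_s(y))/s^N$. The Calder\'on--Zygmund decomposition is precisely what isolates the absolutely continuous contribution into a piece with enough regularity for classical differentiability; your scale-$r$ near/far split of $\mu$ does not achieve that separation.
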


The normalization factor in the denominator satisfies
\[{}
\seminorm{1}_{L^{\frac{N}{N-1}}\lw(B_{r}(y))} 
= |B_{r}(y)|^{\frac{N-1}{N}}
= d_{N} r^{N - 1},
\] 
for some constant \(d_{N} > 0\).{}
Our proof of this weak-\(L^{\frac{N}{N-1}}\) differentiability property of \(K * \mu\) relies on the Calderón-Zygmund decomposition of \(\mu\) and the \(L^{\frac{N}{N-1}}\)~differentiability of Sobolev functions.
After completing this paper we have been informed by J.~Verdera that the methods used in his joint work with Cufí~\cite{CufiVerdera}, where they investigate new fine properties of functions \(u\) such that \(\Delta u\) is a measure, can be adapted to yield an alternative proof of \Cref{theoremDifferentiability}; see their comments on p.\@~1087 in~\cite{CufiVerdera} and also \cite{Verdera}.{}
While there is no hope of having \(L^{\frac{N}{N-1}}\) differentiability in full generality,
under additional ellipticity assumptions on the differential operator associated to \(K\), Gmeineder and Raita prove in~\cite{GmeinederRaita} that \(K * \mu\) belongs to \(L^{\frac{N}{N-1}}(\R^{N})\) and is \(L^{\frac{N}{N-1}}\)-differentiable in the usual sense.

We aim at a self-contained presentation, reproducing for the benefit of the reader also some intermediate results
already present in the literature.
The paper is then organized as follows.
We explain in \Cref{2} the connection between approximate differentiability and the Lipschitz-type condition used by Haj\l asz in~\cite{Hajlasz1996}. 
In \Cref{3} we recall the singular estimates for \(K * \mu\) when \(\mu \in L^{2}(\R^{N})\), based on the Fourier transform. 
In \Cref{4,sectionProofProposition} we obtain a weak-$L^1$ estimate for the approximate derivative of \(K * \mu\) when \(\mu\) is a measure, following the approach of Calderón and Zygmund's. 
We prove \Cref{theoremADSingularIntegralAlternative,theoremDifferentiability} in \Cref{4',sectionDifferentiability}, respectively.

In \Cref{5}, we apply the singular-integral estimates to identify the second-order approximate derivative of the Newtonian potential and the solution of the Dirichlet problem with measure data. 
We then prove identity~\eqref{eqmain1} in \Cref{sectionProofThm}.
In \Cref{7} we give two applications of \Cref{th:main1}. 
The first one is a new proof of a property of level sets of subharmonic functions by Frank and Lieb~\cite{FrankLieb}. 
The second one concerns the description of limiting vorticities of the Ginzburg-Landau model with magnetic field in bounded open subsets \(\Omega\) of \(\R^{2}\) that extends previous result by Sandier and Serfaty~\cite{SandierSerfaty, SandierSerfaty2003} in the \(L^{p}\) setting.
Based on regularity results by Caffarelli and Salazar~\cite{CaffarelliSalazar:2002}, we deduce that a limiting vorticity \(\mu \in L^{1}\loc(\Omega)\) can be written as
\[{}
\mu = \sum_{j \in J}{m_{j} \chi_{U_{j}}}
\quad \text{almost everywhere in \(\Omega\)},
\]
for some disjoint family of open subsets \(U_{j} \subset \Omega\), where \(m_{j}\) is the constant value of the limiting induced magnetic field on \(U_{j}\).{}

\section{Approximate differentiability via Lipschitz-type estimates}\label{2}

We recall that the approximate limit \(c \vcentcolon=\aplim\limits_{x\to y}{v}\) of a measurable function  \(v : \R^{N} \to \R^{m}\) at \(y\) is defined by the property
\begin{equation*}
\lim_{r \to 0}{\frac{\meas{A_{\epsilon}(v, c) \cap B_{r}(y)}}{\meas{B_{r}(y)}}}
= 0
\quad \text{for every \(\epsilon > 0\),}
\end{equation*}
with
\[{}
A_{\epsilon}(v, c)
\vcentcolon= \bigl\{ x \in \R^{N} : |v(x)- c|>\epsilon \bigr\}.
\]
In the terminology of Measure theory, \(y\) must be a density point of the set \(\R^N\setminus A_{\epsilon}(v, c)\) for every \(\epsilon > 0\). 
By Lebesgue's density theorem, one has $\aplim\limits_{x\to y}{v} = v(y)$ almost everywhere in $\R^N$ and in particular, for all $\alpha \in\R^{m}$,
\begin{equation*}
\aplim_{x\to y}{v} = \alpha \quad\text{almost everywhere on \(\{v = \alpha\}\).}
\end{equation*}

Accordingly, the approximate derivative of \(v\) at \(y\) is a linear transformation \(\apD{v}(y) : \R^{N} \to \R^{m}\) such that
\[{}
\aplim_{x \to y}{\frac{v(x) - v(y) - \apD{v}(y)[x - y]}{\abs{x - y}}}
= 0.
\]
Notice that the existence of the approximate derivative at $y$ implies the existence of the approximate limit of \(v\) at \(y\), with
\[{}
\aplim_{x \to y}{v}
= v(y).
\]
Lebesgue's density theorem can be invoked again, see for instance 
\cite{EvansGariepy}*{Theorem~6.3}, to get for all $\alpha\in\R^{m}$,
\begin{equation}\label{eqDensityPoint_ter}
\apD{v}=0\quad\text{almost everywhere on \(\{v=\alpha\}\).}
\end{equation}
These concepts extend to functions defined on measurable subsets of \(\R^{N}\)
and both \(\aplim\limits_{x\to y}{v}\) and \(\apD{v}(y)\) are uniquely defined at each density  point of the domain.

Haj\l asz's strategy to prove approximate differentiability from a Lipschitz-type estimate with variable coefficient is based on the following

\begin{proposition}\label{appdiff}
	Let \(E \subset \R^{N}\) and \(v : E \to \R\) be a measurable function.
	If there exists a measurable function \(I : E \to {[}0, \infty{)}\) such that
	\[{}
	\abs{v(x) - v(y)}
	\le (I(x) + I(y)) \abs{x - y}
	\quad \text{for every \(x, y \in E\),}
	\]
	then \(v\) is approximately differentiable almost everywhere in \(E\) and its approximate derivative satisfies
	\[{}
	\abs{\apD{v}}
	\le 2 I
	\quad \text{almost everywhere in \(E\).}
	\]
\end{proposition}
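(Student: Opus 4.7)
The plan is to reduce the assertion to Rademacher's theorem via a truncation/extension argument, combined with density-point reasoning on level sets of $I$.

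First I would stratify $E$ by the size of $I$: set
\[
E_n \vcentcolon= \{x \in E : I(x) \le n\}, \qquad n \in \N.
\]
Since $I$ takes values in $[0,\infty)$, we have $E = \bigcup_n E_n$. The hypothesis on $v$ immediately gives, for all $x,y \in E_n$,
\[
\abs{v(x)-v(y)} \le 2n\,\abs{x-y},
\]
so $v\restriction_{E_n}$ is $2n$-Lipschitz. By the McShane extension theorem there exists $\tilde v_n \in \Lip(\R^N)$ with $\tilde v_n = v$ on $E_n$ and $\Lip(\tilde v_n) \le 2n$. Rademacher's theorem then yields a null set $N_n \subset \R^N$ such that $\tilde v_n$ is differentiable at every point of $\R^N \setminus N_n$.

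Next I would identify, for each $n$, the "good" points of $E_n$: the set $G_n \subset E_n$ of $y \in E_n \setminus N_n$ which are simultaneously density points of $E_n$ and approximate continuity points of $I$. Since both $E_n$ and $I$ are measurable, standard density/Lusin-type results give $\abs{E_n \setminus G_n} = 0$. Fix $y \in G_n$ and set $L \vcentcolon= \nabla \tilde v_n(y)$. Because $E_n$ has density $1$ at $y$ and $v = \tilde v_n$ on $E_n$, the remainder
\[
\frac{v(x) - v(y) - L[x-y]}{\abs{x-y}}
\]
tends to $0$ as $x \to y$ outside a set of density $0$ at $y$; this is precisely the statement that $v$ is approximately differentiable at $y$ with $\apD v(y) = L$.

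The step I expect to require the most care is the pointwise bound $\abs{\apD v(y)} \le 2 I(y)$. Here I would argue as follows. Fix a unit vector $h \in \R^N$ and $\epsilon > 0$. The set
\[
F_\epsilon \vcentcolon= E_n \cap \bigl\{\,x : \abs{v(x) - v(y) - L[x-y]} \le \epsilon \abs{x-y}\,\bigr\} \cap \bigl\{\,I \le I(y) + \epsilon\,\bigr\}
\]
has density $1$ at $y$, since each of the three sets does (using respectively $y \in G_n$, approximate differentiability of $v$ at $y$, and approximate continuity of $I$ at $y$). In particular $F_\epsilon$ meets every open cone with apex $y$ and axis $h$ on a set of positive density, so one can select a sequence $x_k \in F_\epsilon$ with $x_k \to y$ and $(x_k-y)/\abs{x_k-y} \to h$. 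For each such $x_k$, the hypothesis gives
\[
\abs{L[x_k - y]} \le \abs{v(x_k) - v(y)} + \epsilon \abs{x_k - y} \le (I(x_k) + I(y) + \epsilon)\abs{x_k - y} \le (2I(y) + 2\epsilon)\abs{x_k - y}.
\]
Dividing by $\abs{x_k-y}$ and letting $k \to \infty$ yields $\abs{L[h]} \le 2 I(y) + 2\epsilon$, and then $\epsilon \downarrow 0$ and the arbitrariness of $h$ give $\abs{\apD v(y)} \le 2 I(y)$. Taking the union over $n$ concludes the proof, since $\abs{E \setminus \bigcup_n G_n} = 0$.
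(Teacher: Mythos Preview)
Your argument is correct and shares the same skeleton as the paper's proof: stratify by the size of $I$, extend the resulting Lipschitz restriction to $\R^N$, apply Rademacher, and identify $\apD v$ with the classical gradient of the extension at density points.

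The one substantive difference is in how the bound $\abs{\apD v}\le 2I$ is obtained. The paper slices more finely, working on the thin bands $M=\{\alpha\le I\le\alpha+\epsilon\}$; on such a band the Lipschitz constant of the extension is $2(\alpha+\epsilon)\le 2(I(y)+\epsilon)$, so the bound on $\abs{\apD v(y)}$ is immediate from $\abs{Dh(y)}\le\Lip(h)$, and one finishes by letting $\epsilon\to 0$. Your coarser slices $E_n=\{I\le n\}$ give only $\abs{\apD v}\le 2n$ from the extension, which forces you to run a separate directional argument using approximate continuity of $I$ and the original hypothesis to recover the sharp constant. Both routes are valid; the paper's is shorter and avoids the cone/sequence selection, while yours has the minor advantage that the stratification itself is simpler (a single countable family rather than a two-parameter one).
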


\begin{proof}
	Given \(\epsilon > 0\) and \(\alpha > 0\), the set 
	\[{}
	M = \{\alpha \le I \le \alpha + \epsilon\}.
	\]
	is a measurable subset of $E$. 
	Since \(v\) is Lipschitz-continuous on \(M\) with Lipschitz constant \(2 (\alpha + \epsilon)\), there exists a Lipschitz-continuous function \(h : \R^{N} \to \R\) with the same Lipschitz constant and such that \(h = v\) on \(M\).{}
	Observe that if \(y \in M\) is a density point of \(M\) and \(h\) is differentiable at \(y\), then \(v\) has an approximate derivative at \(y\) and \(\apD{v}(y) = Dh(y)\).{}
	Thus,
	\[{}
	\abs{\apD{v}(y)}
	= \abs{Dh(y)}
	\le 2(\alpha + \epsilon){}
	\le 2(I(y) + \epsilon).
	\]
	Since  by Lebesgue's density theorem almost every point of \(M\) is a density point of \(M\) and by Rademacher's theorem \(h\) is differentiable almost everywhere in \(\R^{N}\),{}
	we deduce from the observation above that \(v\) is approximately differentiable at almost every point of \(M\) and
	\[{}
	\abs{\apD{v}}
	\le 2(I + \epsilon){}
	\quad \text{almost everywhere in \(M\).}
	\]
	Since \(E\) can be covered by a countable union of such sets \(M\), we thus have
	\[{}
	\abs{\apD{v}}
	\le 2(I + \epsilon){}
	\quad \text{almost everywhere in \(E\),}
	\]
	and the estimate follows since \(\epsilon > 0\) is arbitrary.
\end{proof}

The Lipschitz-type estimate of \Cref{appdiff} is satisfied for example by Sobolev functions:

\begin{proposition}\label{MaximalandSobolev}
If \(v : \R^{N} \to \R\) is a measurable function such that \(v \in W^{1, 1}\loc(\R^{N})\), then
\[{}
\abs{\Lebesgue{v}(x) - \Lebesgue{v}(y)}
\le 2^{N} \bigl(\cM{\abs{\nabla v}}(x) + \cM{\abs{\nabla v}}(y)\bigr) \abs{x - y}
\]
for every Lebesgue points \(x\) and \(y\) of \(v\), where \(\Lebesgue{v}\) denotes the precise representative of \(v\).
\end{proposition}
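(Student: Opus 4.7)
The plan is the classical dyadic telescoping argument of Haj\l asz, which reduces the pointwise estimate to the \(L^{1}\) Poincaré inequality on balls and the Lebesgue differentiation theorem. The key auxiliary estimate is that for every Lebesgue point \(z\) of \(v\) and every ball \(B \ni z\) of radius \(R\),
\[
\abs{\Lebesgue{v}(z) - v_{B}}
\le c_{N} R\, \cM\abs{\nabla v}(z),
\]
where \(v_{B} = \fint_{B} v\). To prove this, I would set \(B_{k} = B_{R/2^{k}}(z) \subset B\); by the Lebesgue property, \(v_{B_{k}} \to \Lebesgue{v}(z)\). A telescoping sum together with \(B_{k+1} \subset B_{k}\) and the ratio of measures \(\abs{B_{k}}/\abs{B_{k+1}} = 2^{N}\) gives
\[
\abs{\Lebesgue{v}(z) - v_{B}}
\le \sum_{k=0}^{\infty} \abs{v_{B_{k+1}} - v_{B_{k}}}
\le 2^{N} \sum_{k=0}^{\infty} \fint_{B_{k}} \abs{v - v_{B_{k}}}.
\]
The \(L^{1}\) Poincaré inequality on each \(B_{k}\) bounds the average by a constant times \((R/2^{k})\, \cM\abs{\nabla v}(z)\), and the geometric series sums to the desired estimate.

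To conclude, for two Lebesgue points \(x, y\) of \(v\) with \(r = \abs{x - y}\), I would choose a common ball \(B\) of radius of order \(r\) containing both points, for instance \(B = B_{2r}(x)\), which also lies inside \(B_{3r}(y)\), and split
\[
\abs{\Lebesgue{v}(x) - \Lebesgue{v}(y)}
\le \abs{\Lebesgue{v}(x) - v_{B}} + \abs{\Lebesgue{v}(y) - v_{B}}.
\]
Applying the auxiliary estimate to both terms yields a bound of the form \(C_{N} r \bigl(\cM\abs{\nabla v}(x) + \cM\abs{\nabla v}(y)\bigr)\); the slight enlargement \(B \subset B_{3r}(y)\) is absorbed into the dimensional constant through \(\fint_{B} \abs{\nabla v} \le (3/2)^{N} \cM\abs{\nabla v}(y)\).

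The essential difficulty is not the overall strategy, which is classical, but rather the bookkeeping needed to arrive exactly at the prefactor \(2^{N}\) of the statement, rather than at some larger dimensional constant. This requires an optimal choice of the common ball \(B\), a tight accounting in the dyadic sum, and an appropriate normalization of the maximal function (centered versus uncentered). For the slightly weaker statement with an unspecified dimensional constant \(c_{N}\), the strategy outlined above runs through with no serious obstacle.
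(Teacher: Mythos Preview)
Your strategy is correct and yields the estimate with some dimensional constant \(c_{N}\), but it is a genuinely different route from the paper's. The paper argues first for smooth \(v\): by the Fundamental theorem of Calculus along segments from \(x\) to points of the ball \(B_{r}\bigl(\tfrac{x+y}{2}\bigr)\) with \(r=\abs{x-y}/2\), one bounds \(\bigl|v(x)-\fint_{B_{r}((x+y)/2)}v\bigr|\); after the change of variables \(\xi=x+t(z-x)\), the inclusion \(B_{r}\bigl(\tfrac{x+y}{2}\bigr)\subset B_{2r}(x)\) produces exactly the factor \(2^{N}\) and the maximal function \(\cM\abs{\nabla v}(x)\). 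A symmetric estimate at \(y\) and the triangle inequality give the smooth case; the general case follows by mollification, using \(\cM\abs{\nabla(\rho_{n}\ast v)}\le \cM\abs{\nabla v}\) pointwise and \(\rho_{n}\ast v\to\Lebesgue{v}\) at Lebesgue points. Your dyadic telescoping via the \(L^{1}\) Poincaré inequality has the advantage of working directly with the precise representative, without any smoothing, but it inevitably picks up the Poincaré constant and the enlargement factors from inclusions such as \(B\subset B_{3r}(y)\), so it cannot reach the stated prefactor \(2^{N}\); you rightly flag this. If the exact constant matters, the paper's line-integral argument with the midpoint ball is the cleaner path.
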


The Hardy-Littlewood maximal function $\cM{f} : \R^{N} \to [0, \infty]$ of a locally summable function $f$ is defined by
\begin{equation*}
\cM{f}(x)
\vcentcolon= \sup{\biggl\{\fint_{B_{r}(x)}{|f|} : r > 0 \biggr\}}.
\end{equation*}
We also recall that \(x \in \R^{N}\) is a Lebesgue point of \(f\) and \(\Lebesgue{f}(x)\) 
is the value of the precise representative of \(f\) at \(x\) whenever
\[{}
\lim_{r \to 0}{\fint_{B_{r}(x)}{\abs{f - \Lebesgue{f}(x)}}}
= 0.
\]
The classical Lebesgue differentiation theorem implies that almost every point in \(\R^{N}\) is a Lebesgue point of \(f\) and 
\begin{equation}
\label{eqLDT}
\Lebesgue{f} = f
\quad \text{almost everywhere in \(\R^{N}\).}
\end{equation}
Notice also that, as a simple consequence of the Markov-Chebyshev inequality, for all \(x \in \R^{N}\) one has the implication
$$
\lim_{r \to 0}{\fint_{B_{r}(x)}{\abs{f - \Lebesgue{f}(x)}}}=0
\quad\Longrightarrow\quad
\aplim_{y\to x}f(y)=\Lebesgue{f}(x),
$$
but the converse implication does not hold in general.

A natural problem is to find sufficient conditions that identify Lebesgue points of a given function.
For example, when \(v \in W\loc^{1, 1}(\R^{N})\) as above, it follows from Remark~3.82 of \cite{AFP} that every \(x \in \R^{N}\) such that 
\[{}
\cM\abs{\nabla v}(x) < \infty{}
\]
is a Lebesgue point of \(v\).{}
Hence, the inequality in \Cref{MaximalandSobolev} is valid for every \(x, y \in \{\cM\abs{\nabla v} < \infty\}\).
Observe however that it does not provide the identification~\eqref{eqIdentificationApproximateDerivative} between \(\apD{v}\) and \(\nabla v\), which goes back to the work by Calderón and Zygmund~\cite{CalderonZygmund1962} that includes the case of functions with bounded variation (BV); see also \cite{AFP}*{Theorem~3.83}.

We sketch the proof of \Cref{MaximalandSobolev} for the convenience of the reader; see also
\cite{AFP}*{Theorem~5.34} for a different argument.
A variant of \Cref{MaximalandSobolev} based on the sharp maximal function can be found for example in \cite{DeVoreSharpley}*{Theorem~2.5}.
We also refer the reader to \cite{KuusiMingione} for applications of pointwise inequalities of this type in the setting on nonlinear Potential theory.

\resetconstant
\begin{proof}[Proof of \Cref{MaximalandSobolev}]
	Assume temporarily that \(v\) is smooth.
	By the Fundamental theorem of Calculus and Fubini's theorem,
	\[{}
	\biggabs{v(x) - \fint_{B_{r}(\frac{x+y}{2})}{v}}
	\le \int_{0}^{1}{\fint_{B_{r}(\frac{x+y}{2})}{\abs{\nabla v(x + t(z - x))} \abs{x - z} \dif z} \dif t},
	\]
	for any \(r > 0\).
	Taking \(r = \abs{x - y}/2\), one has \(B_{r}(\frac{x+y}{2}) \subset B_{2r}(x)\).{}
	Moreover, for every \(z\in B_{r}(\frac{x+y}{2})\), \(\abs{x - z} \le \abs{x - y}\).
	Hence,
	\[{}
	\biggabs{v(x) - \fint_{B_{r}(\frac{x+y}{2})}{v}}
	\le \frac{1}{\abs{B_{1}} r^{N}} \int_{0}^{1}{\int_{B_{2r}(x)}{\abs{\nabla v(x + t(z - x))} \dif z} \dif t} \; \abs{x - y},
	\]
	where \(B_{1} \vcentcolon= B_{1}(0)\) is the unit ball centered at \(0\).{}
	Making the change of variables \(\xi = x + t(z - x)\) between \(z\) and \(\xi\) in the right-hand side and using the definition of \(\cM\abs{\nabla v}(x)\), one gets
	\[{}
	\begin{split}
	\biggabs{{v}(x) - \fint_{B_{r}(\frac{x+y}{2})}{v}}
	& \le \frac{1}{\abs{B_{1}} r^{N}} \int_{0}^{1}{\int_{B_{2tr}(x)}{\abs{\nabla v(\xi)} \frac{\dif \xi}{t^{N}}} \dif t} \; \abs{x - y}\\
	& \le 2^{N} \int_{0}^{1}{\cM\abs{\nabla v}(x)} \dif t \; \abs{x - y}
	= 2^{N} \cM\abs{\nabla v}(x) \, \abs{x - y}.
	\end{split}
	\] 
	A similar estimate holds for the point \(y\) and one concludes using the triangle inequality.
	
	For a measurable function \(v\) as in the statement, one can apply the conclusion to the smooth function \(\rho_{n} * v\), where 
	\((\rho_{n})_{n \in \N}\) is a sequence of mollifiers of the form \(\rho_{n}(z) \vcentcolon= \frac{1}{\epsilon_{n}^{N}} \rho({z}/{\epsilon_{n}})\) for some fixed nonnegative function \(\rho \in C_{c}^{\infty}(B_{1})\) such that \(\int_{B_{1}}{\rho} = 1\) and \((\epsilon_{n})_{n \in \N}\) is a sequence of positive numbers that converges to zero. 
	As \(n \to \infty\), one has
	\[{}
	\rho_{n} * v(x) \to \Lebesgue{v}(x)
	\quad\text{for every Lebesgue point \(x \in \R^{N}\).}
	\]
	Moreover, the pointwise estimate \(\abs{\nabla(\rho_{n} * v)} \le \rho_{n} * \abs{\nabla v}\) implies that
	\[{}
	\cM\abs{\nabla(\rho_{n} * v)}(x) 
	\le \cM(\rho_{n} * \abs{\nabla v})(x)
	\le \cM\abs{\nabla v}(x)
	\quad \text{for every \(x \in \R^{N}\).}
	\qedhere
	\]
\end{proof}

\section{Weak differentiability of $K\ast \mu$ in the $L^{2}$ case}\label{3}

By the first estimate in \eqref{eqKernel}, one can write \(K\) as a sum of \(L^{1}\) and \(L^{\infty}\) functions, for instance:
\begin{equation}
	\label{eqKDecomposition}
	K = K \chi_{B_{1}} + K \chi_{\R^{N} \setminus B_{1}}.{}
\end{equation}
In particular, the convolution \(K * \mu\) is defined almost everywhere in \(\R^{N}\){}
for every \(\mu \in L^{1}(\R^{N})\) and, more generally, for finite Borel measures, and belongs to \((L^{1} + L^{\infty})(\R^{N})\).{}
The fact that the distributional derivative $\nabla (K\ast \mu)$ belongs to $L^2(\R^{N}; \R^{N})$ when $\mu \in L^2(\R^N)$ is a known property in Harmonic analysis that can be proved using the Fourier transform. 

\begin{proposition}
	\label{L2caseNew}
	If \(K \in C^{2}(\R^{N} \setminus \{0\})\) satisfies \eqref{eqKernel} and \(\mu \in (L^{1} \cap L^{2})(\R^{N})\), then the distributional gradient of $K \ast \mu$ belongs to \(L^{2}(\R^{N}; \R^{N})\) and we have
	\[{}
	\norm{\nabla(K * \mu)}_{L^{2}(\R^{N})}
	\le C \norm{\mu}_{L^{2}(\R^{N})},
	\]
	for some constant \(C > 0\) depending on \(A\), \(B\) and \(N\).
\end{proposition}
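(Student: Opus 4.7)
The plan is to reduce the estimate to a pointwise Fourier-multiplier bound on $\hat K$ and then conclude by Plancherel. Formally, $\widehat{\nabla(K*\mu)}(\xi)=2\pi i\,\xi\,\hat K(\xi)\,\hat\mu(\xi)$, so Plancherel yields
\[
\norm{\nabla(K*\mu)}_{L^{2}(\R^{N})}^{2}=(2\pi)^{2}\int_{\R^{N}}\abs{\xi}^{2}\abs{\hat K(\xi)}^{2}\abs{\hat\mu(\xi)}^{2}\dif\xi,
\]
and since $\norm{\hat\mu}_{L^{2}}=\norm{\mu}_{L^{2}}$, the proposition reduces to the pointwise estimate
\[
\abs{\hat K(\xi)}\le\frac{C}{\abs{\xi}}\quad\text{for every }\xi\in\R^{N}\setminus\{0\},
\]
for some constant $C$ depending only on $A$, $B$ and $N$.

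The main step is then this multiplier estimate. I would fix $\xi\neq 0$, set $R\vcentcolon=1/\abs{\xi}$, and split $K=K\chi_{B_{R}}+K\chi_{\R^{N}\setminus B_{R}}$. For the near piece, the first bound in \eqref{eqKernel} provides
\[
\norm{K\chi_{B_{R}}}_{L^{1}(\R^{N})}\le A\int_{B_{R}}\frac{\dif x}{\abs{x}^{N-1}}=A\abs{S^{N-1}}R,
\]
which immediately dominates its Fourier transform and produces a contribution of order $1/\abs{\xi}$. For the far piece, I would exploit $\Delta_{x}e^{-2\pi i x\cdot\xi}=-(2\pi\abs{\xi})^{2}e^{-2\pi i x\cdot\xi}$ to introduce a factor $1/\abs{\xi}^{2}$ and then integrate by parts twice on $\{\abs{x}>R\}$. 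The bulk term is controlled by $\abs{D^{2}K}\le B/\abs{x}^{N+1}$, whose tail integral over $\{\abs{x}>R\}$ is of order $1/R$; after division by $(2\pi\abs{\xi})^{2}$ this yields the desired $1/\abs{\xi}$ bound. The two boundary contributions on $\partial B_{R}$ are handled using the pointwise bound on $K$ itself and an auxiliary estimate $\abs{\nabla K(x)}\le C'(A,B,N)/\abs{x}^{N}$, which follows from the standard $C^{2}$ interpolation inequality applied on a spherical shell of thickness comparable to $R$ around $\partial B_{R}$, where \eqref{eqKernel} provides uniform bounds on $K$ and $D^{2}K$.

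To make the manipulations rigorous, I would first regularise $\mu$ by convolving with a standard mollifier $\rho_{n}$. The approximations $\rho_{n}*\mu$ are smooth, compactly supported, and satisfy $\norm{\rho_{n}*\mu}_{L^{2}}\le\norm{\mu}_{L^{2}}$, so the functions $K*(\rho_{n}*\mu)$ are smooth and the Plancherel reduction above is fully justified, giving $\norm{\nabla[K*(\rho_{n}*\mu)]}_{L^{2}}\le C\norm{\mu}_{L^{2}}$ uniformly in $n$. Thanks to the decomposition \eqref{eqKDecomposition}, one has $K*(\rho_{n}*\mu)\to K*\mu$ in $\mathcal{D}'(\R^{N})$, and weak compactness in $L^{2}$ identifies the weak-$L^{2}$ limit of the regularised gradients with the distributional gradient of $K*\mu$, yielding the claimed inequality.

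The main obstacle is the careful treatment of the two boundary contributions arising from the double integration by parts on $\partial B_{R}$: one of them requires a bound on $\nabla K$ which is not contained in \eqref{eqKernel} and must be produced from the $K$ and $D^{2}K$ bounds via the interpolation argument indicated above. Once that auxiliary estimate is available, all the remaining bookkeeping is routine, and the scales conspire precisely so that each piece contributes a term of the correct order $1/\abs{\xi}$.
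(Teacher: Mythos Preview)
Your strategy is exactly the paper's: reduce to the multiplier bound \(\abs{\hat K(\xi)}\le C/\abs{\xi}\) via Plancherel, split \(K\) at scale \(R=1/\abs{\xi}\), estimate the near piece by its \(L^{1}\) norm, and trade two derivatives onto \(K\) on the far piece. The interpolation step giving \(\abs{\nabla K(x)}\le C'/\abs{x}^{N}\) is also the same; the paper records it as \eqref{eqKernel1}.

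The one real difference is that the paper uses a \emph{smooth} cutoff \(\psi_{r}\) rather than your sharp \(\chi_{B_{R}}\). This is not cosmetic. With the smooth cutoff, the far piece \(K_{\infty,r}=K(1-\psi_{r})\) is globally \(C^{2}\), so its distributional Laplacian is the honest pointwise \(\Delta K_{\infty,r}\in L^{1}(\R^{N})\), and the identity \(\mathcal{F}(\Delta K_{\infty,r})=-4\pi^{2}\abs{\xi}^{2}\mathcal{F}(K_{\infty,r})\) holds in \(\mathcal{S}'\) with no boundary terms whatsoever. With your sharp cutoff, the far piece \(K\chi_{\{\abs{x}>R\}}\) is not in \(L^{1}\), so the integral \(\int_{\abs{x}>R}K(x)e^{-2\pi i x\cdot\xi}\dif x\) is not absolutely convergent and your ``integrate by parts twice on \(\{\abs{x}>R\}\)'' needs justification: if you truncate at radius \(M\) and let \(M\to\infty\), one of the boundary terms on \(\partial B_{M}\) has size \(O(\abs{\xi})\) from the crude pointwise bounds alone and does \emph{not} obviously vanish. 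It does vanish, but you have to invoke either oscillation (a stationary-phase estimate on \(\partial B_{M}\)) or work entirely in \(\mathcal{S}'\) and compute \(\Delta(K\chi_{\{\abs{x}>R\}})\) distributionally, picking up the two surface terms on \(\partial B_{R}\) that you already budgeted for. Either route works, but you have glossed over this point by mentioning only the inner boundary \(\partial B_{R}\). Replacing \(\chi_{B_{R}}\) by a smooth \(\psi_{R}\) eliminates the issue at no cost; the ``boundary'' contributions then reappear as the cross terms \(2\nabla K\cdot\nabla\psi_{R}+K\Delta\psi_{R}\) inside \(\Delta K_{\infty,R}\), supported in the annulus \(B_{2R}\setminus B_{R}\), and are estimated by exactly the same bounds on \(K\) and \(\nabla K\) that you planned to use.
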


We recall that the Fourier transform of a function \(f \in L^{1}(\R^{N})\) is defined for every \(\xi \in \R^{N}\) by
\[{}
\mathcal{F}f(\xi){}
= \int_{\R^{N}}{\e^{- 2\pi \imath x \cdot \xi} f(x) \dif x},
\]
and has a unique continuous extension to every function in \(L^{2}(\R^{N})\).
We give a proof of the proposition above for the sake of completeness.
It relies on the following property of the Fourier transform of \(K\); see Theorem~2.4 in Chapter~3 of \cite{SteinShakarchi4}:

\begin{lemma}
	\label{lemmaKHomogeneous}
	If \(K \in C^{2}(\R^{N} \setminus \{0\})\) satisfies \eqref{eqKernel}, then its Fourier transform \(\mathcal{F}K\) is continuous in \(\R^{N} \setminus \{0\}\) and there exists \(C' > 0\) such that
	\[{}
	 \abs{\mathcal{F}K(\xi)}
	 \le \frac{C'}{\abs{\xi}}
	 \quad \text{for every \(\xi \in \R^{N} \setminus \{0\}\).}
	\]
\end{lemma}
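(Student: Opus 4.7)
The plan is a Calder\'on--Zygmund-type splitting of \(K\) at the scale \(1/\abs{\xi}\) dictated by the oscillation. I would fix \(\xi \in \R^{N} \setminus \{0\}\) and a smooth radial cutoff \(\phi \in C_{c}^{\infty}(\R^{N})\) with \(\phi = 1\) on \(B_{1}\) and \(\supp{\phi} \subset B_{2}\), setting \(\phi_{\xi}(x) \vcentcolon= \phi(\abs{\xi} x)\). Decompose
\[
\mathcal{F}K(\xi) = I_{1}(\xi) + I_{2}(\xi),
\]
where
\[
I_{1}(\xi) \vcentcolon= \int_{\R^{N}}{K(x)\phi_{\xi}(x)\, \e^{-2\pi\imath x\cdot\xi}\dif x}
\]
is absolutely convergent, while
\[
I_{2}(\xi) \vcentcolon= \lim_{L\to\infty}\int_{\R^{N}}{K(x)(1-\phi_{\xi}(x))\psi_{L}(x)\, \e^{-2\pi\imath x\cdot\xi}\dif x}
\]
is defined via an additional compactly supported smooth cutoff \(\psi_{L}(x) = \psi(x/L)\). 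This regularization is needed because \(K\) only lies in \((L^{1}+L^{\infty})(\R^{N})\), so \emph{a priori} \(\mathcal{F}K\) is just a tempered distribution; the identity \(\mathcal{F}K = I_{1} + I_{2}\) is then justified by testing against Schwartz functions.

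For \(I_{1}(\xi)\), the inclusion \(\supp{\phi_{\xi}} \subset B_{2/\abs{\xi}}\) combined with \(\abs{K(x)} \le A\abs{x}^{-(N-1)}\) yields \(\abs{I_{1}(\xi)} \le CA/\abs{\xi}\) by direct integration in polar coordinates. For the truncated form of \(I_{2}(\xi)\), I would integrate by parts twice using
\[
\e^{-2\pi\imath x\cdot\xi}
= \frac{-1}{4\pi^{2}\abs{\xi}^{2}} \Delta_{x} \e^{-2\pi\imath x\cdot\xi},
\]
which is legitimate because the truncated integrand is \(C^{2}\) and compactly supported: the factor \(1-\phi_{\xi}\) kills the singularity of \(K\) at the origin and \(\psi_{L}\) provides compact support. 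Sending \(L \to \infty\), every contribution containing derivatives of \(\psi_{L}\) vanishes, since these are supported on \(B_{2L} \setminus B_{L}\) of volume of order \(L^{N}\), with \(\abs{K} \lesssim L^{-(N-1)}\), \(\abs{\nabla K} \lesssim L^{-N}\) there, and \(\abs{\nabla \psi_{L}} \lesssim 1/L\), \(\abs{\Delta \psi_{L}} \lesssim 1/L^{2}\). The pointwise bound \(\abs{\nabla K(x)} \le C\abs{x}^{-N}\) used here follows from \eqref{eqKernel} and a Landau--Kolmogorov-type interpolation on the ball \(B_{\abs{x}/2}(x)\). In the limit I would obtain
\[
I_{2}(\xi) = \frac{-1}{4\pi^{2}\abs{\xi}^{2}} \int_{\R^{N}}{\bigl((\Delta K)(1-\phi_{\xi}) - 2\nabla K\cdot \nabla\phi_{\xi} - K\Delta\phi_{\xi}\bigr)(x)\, \e^{-2\pi\imath x\cdot\xi}\dif x}.
\]

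The main task is then to bound the \(L^{1}\)-norm of the bracket by a multiple of \(\abs{\xi}\). The first term uses the second estimate of \eqref{eqKernel}: \(\abs{\Delta K} \le B\abs{x}^{-(N+1)}\) integrated on \(\{\abs{x} \ge 1/\abs{\xi}\}\) gives \(CB\abs{\xi}\). The cross and Laplacian terms are concentrated on the annulus \(\{1/\abs{\xi} \le \abs{x} \le 2/\abs{\xi}\}\) of volume of order \(\abs{\xi}^{-N}\); combining \(\abs{\nabla\phi_{\xi}} \lesssim \abs{\xi}\), \(\abs{\Delta\phi_{\xi}} \lesssim \abs{\xi}^{2}\) with the estimates \(\abs{\nabla K} \lesssim \abs{\xi}^{N}\) and \(\abs{K} \lesssim \abs{\xi}^{N-1}\) on this annulus, each integrand is of size \(\abs{\xi}^{N+1}\) on a set of measure \(\abs{\xi}^{-N}\), hence contributes at most \(C\abs{\xi}\) to the \(L^{1}\)-norm. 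Altogether \(\abs{I_{2}(\xi)} \le C/\abs{\xi}\), and adding the bound for \(I_{1}\) yields \(\abs{\mathcal{F}K(\xi)} \le C'/\abs{\xi}\). Continuity of \(\mathcal{F}K\) on \(\R^{N} \setminus \{0\}\) follows by dominated convergence applied to the same representation, since the dominating bounds are locally uniform on compact subsets of \(\R^{N} \setminus \{0\}\). The trickiest step is the rigorous justification of the double integration by parts in spite of \(K\) having decay too slow to belong to \(L^{1}(\R^N)\); the large cutoff \(\psi_{L}\) together with the pointwise bound on \(\abs{\nabla K}\) are what make the boundary-type contributions vanish in the limit.
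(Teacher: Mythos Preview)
Your argument is correct and follows essentially the same route as the paper: split \(K\) via a smooth cutoff at scale \(r\), bound the inner part by its \(L^{1}\) norm, and for the outer part \(K_{\infty,r} = K(1-\phi_{\xi})\) use the identity \(\mathcal{F}(\Delta K_{\infty,r}) = -4\pi^{2}\abs{\xi}^{2}\mathcal{F}(K_{\infty,r})\) together with the \(L^{1}\) bound on \(\Delta K_{\infty,r}\), then choose \(r = 1/\abs{\xi}\). The paper keeps \(r\) free and optimizes at the end, and it invokes the distributional Fourier identity directly rather than through your auxiliary large cutoff \(\psi_{L}\); your version makes that justification more explicit but is otherwise the same computation.
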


\resetconstant
\begin{proof}[Proof of \Cref{lemmaKHomogeneous}]
	To prove the continuity of \(\mathcal{F}K\) in \(\R^{N} \setminus \{0\}\), one relies on a smooth counterpart of the decomposition \eqref{eqKDecomposition}.
	For this purpose, take \(\psi \in C_{c}^{\infty}(B_{2})\) such that \(0 \le \psi \le 1\) in \(\R^{N}\) and \(\psi = 1\) in \(B_{1}\), and write
	\[{}
	K = K \psi_{r} + K (1-\psi_{r}),
	\]
	where \(\psi_{r}(x) = \psi(x/r)\) for \(r > 0\).
	Since \(K\psi_{r} \in L^{1}(\R^{N})\), the Fourier transform \(\mathcal{F}(K\psi_{r})\) is a bounded continuous function in \(\R^{N}\).{}
	
	Interpolation in \eqref{eqKernel} gives one the first-order counterpart
\begin{equation}
	\label{eqKernel1}
\abs{D K(x)}
\le \frac{\C}{\abs{x}^{N}}
\quad \text{for every \(x \in \R^{N} \setminus \{0\}\).}
\end{equation}
	Since the function \(K_{\infty, r} \vcentcolon= K (1-\psi_{r})\) vanishes in \(B_{r}\), we thus have 
	\[{}
	\abs{\Delta K_{\infty, r}} \le \frac{\Cl{cte-479}}{\abs{x}^{N+1}} \, \chi_{\R^{N} \setminus B_{r}}(x)
	\quad \text{for every \(x \in \R^{N}\),}
	\]
	for some constant \(\Cr{cte-479} > 0\) independent of \(r\).
	In particular, \(\Delta K_{\infty, r}\) belongs to \(L^{1}(\R^{N})\), hence its Fourier transform is also a bounded continuous function in \(\R^{N}\).
	From the identity
	\[{}
	\mathcal{F}(\Delta K_{\infty, r})
	= - 4 \pi^{2} \abs{\xi}^{2} \mathcal{F}(K_{\infty, r}),
	\]
	we deduce that \(\mathcal{F}(K_{\infty, r})\) is continuous in \(\R^{N} \setminus \{0\}\), hence so is \(\mathcal{F}(K)\).{}
	
	To obtain the pointwise estimate of \(\mathcal{F}(K)\), observe that for every \(\xi \ne 0\) and \(r > 0\) we have
	\[{}
	\abs{\mathcal{F}K(\xi)}
	\le \norm{\mathcal{F}(K\psi_{r})}_{L^{\infty}(\R^{N})}
	+ \frac{1}{4 \pi^{2} \abs{\xi}^{2}} \norm{\mathcal{F}(\Delta K_{\infty, r})}_{L^{\infty}(\R^{N})}.
	\]
	Since
	\[{}
	\norm{\mathcal{F}(K\psi_{r})}_{L^{\infty}(\R^{N})}
	\le \norm{K\psi_{r}}_{L^{1}(\R^{N})}
	\le A \int_{B_{2r}}{\frac{\dif x}{\abs{x}^{N - 1}}}
	= \Cl{cte-502} r
	\]
	and
	\[{}
	\norm{\mathcal{F}(\Delta K_{\infty, r})}_{L^{\infty}(\R^{N})}
	\le \norm{\Delta K_{\infty, r}}_{L^{1}(\R^{N})}
	\le \Cr{cte-479} \int_{\R^{N} \setminus B_{r}}{\frac{\dif x}{\abs{x}^{N + 1}}}
	= \frac{\Cl{cte-509}}{r},
	\]
	we get
	\[{}
	\abs{\mathcal{F}K(\xi)}
	\le \Cr{cte-502} r + \frac{\Cr{cte-509}}{4 \pi^{2} \abs{\xi}^{2} \, r }.
	\]
	To conclude it thus suffices to take \(r = 1/\abs{\xi}\).
\end{proof}

\resetconstant
\begin{proof}[Proof of \Cref{L2caseNew}]
By a standard property of the Fourier transform of the convolution,
\[{}
\mathcal{F}(K * \mu)
= (\mathcal{F}K) (\mathcal{F}\mu).
\]
For \(j \in \{1, \dots, N\}\) and \(\varphi \in C_{c}^{\infty}(\R^{N})\), by the  Plancherel theorem we have
\[{}
\int_{\R^{N}}{K * \mu \, \frac{\partial\varphi}{\partial x_{j}}}
= \int_{\R^{N}}{\mathcal{F}(K * \mu) \, \overline{\mathcal{F}\Bigl(\frac{\partial\varphi}{\partial x_{j}} \Bigr)} }
= - \int_{\R^{N}}{(\mathcal{F}K) (\mathcal{F}\mu) \, 2 \pi \imath \xi_{j} \overline{\mathcal{F}\varphi} }.
\]
By \Cref{lemmaKHomogeneous}, the function \(\xi \mapsto \xi_{j}\mathcal{F}K(\xi)\) is bounded in \(\R^{N}\).
It thus follows from the Cauchy-Schwarz inequality and another application of the Plancherel theorem that
\[{}
\biggabs{\int_{\R^{N}}{K * \mu \, \frac{\partial\varphi}{\partial x_{j}}}}
\le 2 \pi C' \norm{\mathcal{F}\mu}_{L^{2}(\R^{N})}\norm{\mathcal{F}\varphi}_{L^{2}(\R^{N})}
= 2 \pi C' \norm{\mu}_{L^{2}(\R^{N})}\norm{\varphi}_{L^{2}(\R^{N})}.
\]
It now suffices to apply the Riesz representation theorem in \(L^{2}(\R^{N})\) to conclude.
\end{proof}

\section{Weak-$L^{1}$ estimate of the approximate derivative}\label{4}

The fundamental tool to prove \Cref{th:main1} is the weak-\(L^{1}\) estimate from the Calder\'on-Zygmund theory of singular integrals.
We revisit their approach to reformulate and make more transparent the role of the approximate differentiability as follows:

\begin{theorem}\label{theoremADSingularIntegral}
	If \(K \in C^{2}(\R^{N} \setminus \{0\})\) satisfies \eqref{eqKernel} and \(\mu\) is a finite Borel measure in $\R^N$, then \(K * \mu\) is approximately differentiable almost everywhere in \(\R^{N}\) and we have
	\begin{equation*}
	\seminorm{\apD(K * \mu)}_{L^{1}\lw(\R^{N})}
	\le C \norm{\mu}_{\cM(\R^{N})},
	\end{equation*}
	where \(C > 0\) depends on \(A\), \(B\) and \(N\).
\end{theorem}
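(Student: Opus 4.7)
My plan is to adapt the classical Calder\'on-Zygmund argument so as to extract, following \cite{Hajlasz1996}, a pointwise Lipschitz-type estimate with variable coefficient for $K * \mu$ outside a small exceptional set. The approximate differentiability and the pointwise bound on $\apD{(K * \mu)}$ then come directly from \Cref{appdiff}, and the weak-$L^{1}$ bound is obtained by optimizing the Calder\'on-Zygmund threshold in terms of $t$.

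I would fix $t > 0$ and a parameter $\alpha > 0$ to be chosen later, and apply the Calder\'on-Zygmund decomposition of $\mu$ at level $\alpha$ to write $\mu = g + b$, where $\norm{g}_{L^{\infty}(\R^{N})} \le C \alpha$, $\norm{g}_{L^{1}(\R^{N})} \le C \norm{\mu}_{\cM(\R^{N})}$, and $b = \sum_{j} b_{j}$ with each $b_{j}$ supported on a cube $Q_{j}$ of a disjoint family, $\int_{Q_{j}} b_{j} = 0$, $\norm{b_{j}}_{L^{1}(\R^{N})} \le C \alpha \abs{Q_{j}}$ and $\sum_{j} \abs{Q_{j}} \le C \norm{\mu}_{\cM(\R^{N})}/\alpha$. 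For the good part one has $\norm{g}_{L^{2}(\R^{N})}^{2} \le \norm{g}_{L^{\infty}(\R^{N})} \norm{g}_{L^{1}(\R^{N})} \le C \alpha \norm{\mu}_{\cM(\R^{N})}$, so \Cref{L2caseNew} yields $\nabla(K * g) \in L^{2}(\R^{N}; \R^{N})$ with the same $L^{2}$ bound, and \Cref{MaximalandSobolev} provides the Lipschitz-type estimate
\[
\abs{K * g(x) - K * g(y)} \le \bigl( I_{g}(x) + I_{g}(y) \bigr) \abs{x - y}, \quad I_{g} \vcentcolon= 2^{N} \cM{\abs{\nabla(K * g)}}.
\]
The $L^{2}$-boundedness of the maximal operator combined with Chebyshev's inequality then yields $\abs{\{I_{g} > t\}} \le C \alpha \norm{\mu}_{\cM(\R^{N})}/t^{2}$.

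For the bad part I would introduce a fixed dilate $\Omega^{*} \vcentcolon= \bigcup_{j} Q_{j}^{*}$ of the bad cubes, so that $\abs{\Omega^{*}} \le C \norm{\mu}_{\cM(\R^{N})}/\alpha$, together with a further dilation $\Omega^{**}$ of comparable measure. For $x, y \in \R^{N} \setminus \Omega^{**}$ and each $j$, the cancellation $\int b_{j} = 0$ lets me rewrite $\int_{Q_{j}} (K(x - z) - K(y - z)) b_{j}(z) \dif z$ after subtracting $K(x - c_{j}) - K(y - c_{j})$, and a Taylor expansion of $z \mapsto K(x - z) - K(y - z)$ at $c_{j}$, using \eqref{eqKernel} together with the first-order consequence $\abs{DK} \le C/\abs{\cdot}^{N}$ obtained by interpolation, yields
\[
\abs{K * b(x) - K * b(y)} \le \bigl( I_{b}(x) + I_{b}(y) \bigr) \abs{x - y},
\quad
I_{b}(x) \vcentcolon= C \sum_{j} \frac{\diam{Q_{j}}}{\abs{x - c_{j}}^{N+1}} \norm{b_{j}}_{L^{1}(\R^{N})}.
\]
A direct computation of $\int_{\R^{N} \setminus \Omega^{**}} I_{b}$ gives $\norm{I_{b}}_{L^{1}(\R^{N})} \le C \norm{\mu}_{\cM(\R^{N})}$, hence $\abs{\{I_{b} > t\}} \le C \norm{\mu}_{\cM(\R^{N})}/t$.

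Combining both parts, $K * \mu$ satisfies a Lipschitz-type estimate with coefficient $I \vcentcolon= C(I_{g} + I_{b})$ on $\R^{N} \setminus \Omega^{**}$; \Cref{appdiff} then yields approximate differentiability of $K * \mu$ almost everywhere in $\R^{N} \setminus \Omega^{**}$ with $\abs{\apD{(K * \mu)}} \le 2 I$, and letting $\alpha \to \infty$ (since $\abs{\Omega^{**}_{\alpha}} \to 0$) covers $\R^{N}$ up to a set of measure zero. Finally, choosing $\alpha = t$ in the three measure estimates yields
\[
\bigabs{\bigl\{ \abs{\apD{(K * \mu)}} > t \bigr\}}
\le \abs{\Omega^{**}} + \abs{\{I_{g} > t/(4C)\}} + \abs{\{I_{b} > t/(4C)\}}
\le \frac{C \norm{\mu}_{\cM(\R^{N})}}{t}.
\]
The main technical obstacle I anticipate lies precisely in the bad part: upgrading a natural pointwise gradient bound to a genuine Lipschitz-type inequality for arbitrary pairs $x, y \in \R^{N} \setminus \Omega^{**}$, whose joining segment may cross $\Omega^{*}$, is where the full $C^{2}$ smoothness in \eqref{eqKernel} and a careful choice of the dilation factors for $Q_{j}^{*}$ and $Q_{j}^{**}$ become essential.
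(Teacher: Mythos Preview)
Your proposal is correct and follows essentially the same approach as the paper: a Calder\'on--Zygmund decomposition of $\mu$, the $L^{2}$ estimate of \Cref{L2caseNew} together with \Cref{MaximalandSobolev} for the good part, a Lipschitz-type estimate with coefficient $I_{b}(x) = C\sum_{j}\diam(Q_{j})\,\abs{x-c_{j}}^{-(N+1)}\norm{b_{j}}_{\cM}$ for the bad part outside the dilated cubes, and then \Cref{appdiff} plus optimization in the threshold. The technical obstacle you flag---producing a genuine two-point Lipschitz estimate for $K*b_{j}$ when the segment $[x,y]$ may cross the bad region---is exactly what the paper isolates as \Cref{propositionDifficultCase} and proves by splitting into the ``close'' case $\abs{x-y}\le\epsilon\abs{x-\bar z}$ (handled via the second-order bound in \eqref{eqKernel}) and the ``far'' case (handled by the decay of $K*b_{j}$ itself), so your anticipated use of the full $C^{2}$ hypothesis is on target.
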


\Cref{theoremADSingularIntegral} relies on the following estimate, whose proof we postpone to the next section: 

\begin{proposition}
	\label{propositionDifficultCase}
Let \(K \in C^{2}(\R^{N} \setminus \{0\})\) be a function that satisfies \eqref{eqKernel} and let \(Q \subset \R^{N}\) be a cube.
If $\nu$ is a finite Borel measure in $\R^N$ with 
\[{}
\nu(Q)=0{}
\quad \text{and} \quad{}
\abs{\nu}(\R^{N} \setminus Q) = 0,
\]
then for every $\theta>1$ there exists a bounded continuous function $I: \R^N \setminus \theta Q \rightarrow {[}0, \infty{)}$ such that
\[{}
|K \ast \nu(x)-K\ast \nu(y)| \leq \left( I(x)+I(y) \right) |x-y|
\quad \text{for every $x,y \in \R^N \setminus \theta Q$,}
\]
and
\[{}
\norm{I}_{L^{1}(\R^N\setminus \theta Q)} 
\leq C' 
\|\nu\|_{\M(\R^N)},
\]
where \(C' > 0\) depends on \(A\), \(B\), \(N\) and \(\theta\), but not on \(Q\).
\end{proposition}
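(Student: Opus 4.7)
The plan is to exploit the cancellation $\nu(Q) = 0$ together with the second-derivative decay $|D^{2}K(\xi)| \le B/|\xi|^{N+1}$ to obtain a sharp pointwise bound on $\nabla(K*\nu)$ outside $\overline{Q}$, and then to convert it into the desired Lipschitz-type estimate by a case analysis based on the relative sizes of $|x-y|$, $\dist(x,Q)$ and $\dist(y,Q)$. Since $\nu$ is supported in $\overline{Q}$ and $K \in C^{2}(\R^{N} \setminus \{0\})$, the function $K * \nu$ is $C^{1}$ on $\R^{N} \setminus \overline{Q}$ and differentiation under the integral sign gives $\nabla(K*\nu)(x) = \int_{Q} \nabla K(x-z)\dif\nu(z)$. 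Writing $x_{Q}$ for the center of $Q$ and using $\nu(Q) = 0$ to subtract the constant $\nabla K(x - x_{Q})$, the mean value theorem applied to $\nabla K$ along the segment joining $x - z$ to $x - x_{Q}$ (which stays at distance $\ge \dist(x,Q)$ from the origin) combined with the bound on $D^{2}K$ yields
\[
|\nabla(K*\nu)(x)| \le \frac{C \diam(Q)\, \|\nu\|_{\M(\R^{N})}}{\dist(x,Q)^{N+1}}, \qquad x \in \R^{N} \setminus \overline{Q},
\]
for some $C > 0$ depending only on $B$ and $N$.

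I then define
\[
I(x) \vcentcolon= \frac{C_{0}\, \diam(Q)\, \|\nu\|_{\M(\R^{N})}}{\dist(x,Q)^{N+1}}, \qquad x \in \R^{N} \setminus \theta Q,
\]
with $C_{0} > 0$ to be fixed at the end. The bound $\dist(x,Q) \ge (\theta-1)\diam(Q)/(2\sqrt{N})$ on $\R^{N}\setminus\theta Q$ makes $I$ continuous and bounded there, while a scaling computation reduces $\int_{\R^{N}\setminus\theta Q} \dif x /\dist(x,Q)^{N+1}$ to a dimensional constant times $1/\diam(Q)$, so $\|I\|_{L^{1}(\R^{N}\setminus\theta Q)} \le C'(\theta,A,B,N) \|\nu\|_{\M(\R^{N})}$. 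For the Lipschitz-type inequality on $x,y \in \R^{N} \setminus \theta Q$, I distinguish three cases. (a) If $|x-y| \le \dist(x,Q)/2$, the segment $[x,y]$ lies in $\{\dist(\cdot, Q) \ge \dist(x,Q)/2\}$, so along it the bound of the first step is controlled by $I(x)$ up to a dimensional factor, and the mean value theorem applied to $K*\nu$ gives $|K*\nu(x)-K*\nu(y)| \le C_{1} I(x)|x-y|$. (b) Symmetrically, the same holds with $I(y)$ whenever $|x-y| \le \dist(y,Q)/2$. (c) Otherwise $|x-y|$ exceeds both $\dist(x,Q)/2$ and $\dist(y,Q)/2$, and I bypass the segment by controlling each of $|K*\nu(x)|$ and $|K*\nu(y)|$ separately: writing $K*\nu(x) = \int_{Q} [K(x-z) - K(x-x_{Q})] \dif\nu(z)$ and invoking the interpolated estimate $|\nabla K(\xi)| \le C''/|\xi|^{N}$ from \eqref{eqKernel1}, I get $|K*\nu(x)| \le C''' \diam(Q) \|\nu\|_{\M(\R^{N})}/\dist(x,Q)^{N}$; combining with $\dist(x,Q) < 2|x-y|$ yields $|K*\nu(x)| \le C_{1}' I(x) |x-y|$, and analogously at $y$.

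The main obstacle is case (c): because the segment $[x,y]$ can pass arbitrarily close to or even cross through $Q$, integrating $\nabla(K*\nu)$ along this segment is not feasible, and no circumvention using only the gradient bound seems available. The classical Calderón–Zygmund trick is precisely to renounce the gradient bound here and instead use the weaker pointwise estimate on $K*\nu$ itself, derived from the interpolated first-derivative bound on $K$; in this regime, $|x-y|$ is automatically comparable to $\dist(x,Q)$, so the factor $|x-y|$ emerges for free when comparing $|K*\nu(x)|$ to $I(x)|x-y|$. A final choice of $C_{0}$ absorbs all multiplicative dimensional constants from (a)--(c) into the definition of $I$, delivering the claimed inequality $|K*\nu(x) - K*\nu(y)| \le (I(x) + I(y))|x-y|$ on $\R^{N} \setminus \theta Q$.
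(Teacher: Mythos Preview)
Your proposal is correct and follows essentially the same approach as the paper: the paper's two lemmas correspond exactly to your pointwise bound on $|K*\nu|$ (their \Cref{lemmaDecayEstimateFar}) and your gradient bound on $|\nabla(K*\nu)|$ (their \Cref{lemmaDecayEstimateClose}), and their two-case split based on $|x-y| \lessgtr \epsilon\max\{|x-\bar z|,|y-\bar z|\}$ matches your cases (a)/(b) versus (c). The only cosmetic differences are that the paper measures everything with $|x-\bar z|$ rather than $\dist(x,Q)$ (these are comparable on $\R^N\setminus\theta Q$) and derives the Lipschitz estimate in the ``close'' case by writing $K*\nu(x)-K*\nu(y)$ as a double integral and inserting the cancellation directly, rather than first isolating the gradient bound and then applying the mean value theorem along $[x,y]$ as you do.
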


We denote by \(\theta Q\) the rescaled cube with the same center as \(Q\) and side-length \(\theta\) times the side-length of \(Q\).{}
\Cref{propositionDifficultCase} has been proved by Haj\l asz without an explicit \(L^{1}\)-estimate of \(I\); see Lemma~9 in \cite{Hajlasz1996}. 
We rely on a variant of his proof that keeps track of the quantity \(\norm{I}_{L^{1}(\R^{N} \setminus \theta Q)}\). 

\resetconstant
\begin{proof}[Proof of \Cref{theoremADSingularIntegral}]
	Given \(t > 0\), we explain below that, thanks to \Cref{appdiff}, it is sufficient to prove the existence of
	a measurable function \(I : \R^{N} \to [0, \infty]\), 
	\emph{possibly depending on \(t\)}, such that
	\begin{equation}
	\label{eqSingularIntegralLipschitz}
	\abs{K * \mu(x) - K * \mu(y)}
	\le (I(x) + I(y)) \abs{x - y}
	\quad \text{for every \(x, y \in \Dom{(K * \mu)}\)}
	\end{equation}
	and
	\begin{equation}
	\label{eqSingularIntegralMeasure}
	\abs{\{I > t\}}
	\le \frac{\Cl{cte-647}}{t} \norm{\mu}_{\cM(\R^{N})},
	\end{equation}
	where \(\Cr{cte-647} > 0\) is independent of \(t\).{}
	In contrast with \Cref{theoremADSingularIntegralAlternative}, the set \(\{I = \infty\}\) need not be Lebesgue-negligible.
	We later show in \Cref{4'} that \(I\) can be chosen independently of \(t\) and then in this case \(\{I = \infty\}\) is negligible.
	For the sake of the proof of \Cref{theoremADSingularIntegral} such an independence of \(t\) is not needed.

	By analogy with the definition of the maximal function in the \(L^{1}\) setting, we first define the maximal function \(\cM\mu(x)\) of the Borel measure \(\mu\) in \(\R^{N}\) by computing the supremum of \(\abs{\mu}(B_{r}(x))/|B_{r}(x)|\) over \(r > 0\).{}
	The set
	\[{}
	F \vcentcolon= \{\mathcal{M}\mu \le t\}
	\]
	is closed and its complement verifies the weak maximal inequality, see p.~19 in \cite{Stein1970}:
	\begin{equation}
		\label{eqWeakMaximalInequality}
	\abs{\R^{N} \setminus F}
	= \abs{\{\mathcal{M}\mu > t\}}
	\le \frac{\Cl{cte-635}}{t} \norm{\mu}_{\cM(\R^{N})}.
	\end{equation}
		
	To construct a function \(I\) that satisfies \eqref{eqSingularIntegralLipschitz} and \eqref{eqSingularIntegralMeasure}, we take a Whitney covering of the open set \(\R^{N} \setminus F\) in terms of cubes \((Q_{n})_{n \in \N}\).{}
	We assume that each cube \(Q_{n}\) is \emph{half-closed}, by this we mean that \(Q_{n}\) is a Cartesian product of intervals of the form \([a_{i}, b_{i})\) with \(a_{i} < b_{i}\).{}
	Such a choice does not change Whitney's construction and yields a \emph{disjoint} family \((Q_{n})_{n \in \N}\).
	Each cube \(Q_{n}\) satisfies 
	\[
	2Q_{n} \subset \R^{N} \setminus F
	\quad \text{and} \quad
	\alpha Q_{n} \cap F \neq \emptyset,{}
	\]
	for some fixed \(\alpha > 2\), depending on \(N\). 
	Hence, 
	\begin{equation}
	\label{eqInclusionF}
	F \subset {\bigcap_{n = 0}^{\infty}}{\bigl(\R^{N} \setminus (2Q_{n}) \bigr)}
	\end{equation}
	and	there exist $0 < c_1 <c_2$ such that 
	\[{}
	c_1\diam(Q_n) \leq d(Q_n,F) \leq c_2\diam(Q_n).
	\]
	By the choice of \(F\), we also have
	\begin{equation}
	\label{eqMeasureCubes}
	\frac{\abs{\mu}(Q_{n})}{\abs{Q_{n}}} 
	\le \Cl{cte-655} t , 
	\end{equation}
	for some constant \(\Cr{cte-655} > 0\) depending on \(N\).{}
	Indeed, taking \(z \in \alpha Q_{n} \cap F\), we have \(\alpha Q_{n} \subset B_{r_{n}}(z)\), where \(r_{n} = \sqrt{N}\alpha\ell_{n}\)  and \(\ell_{n}\) is the side-length of \(Q_{n}\).{}
	Since \(\cM\mu(z) \le t\) and the volumes of \(\alpha Q_{n}\) and \(B_{r_{n}}(z)\) are comparable, we thus have	by monotonicity of \(\abs{\mu}\),
	\[{}
	\frac{\abs{\mu}(Q_{n})}{c\abs{Q_{n}}}
	\le \frac{\abs{\mu}(B_{r_{n}}(z))}{\abs{B_{r_{n}}(z)}}
	\le \cM\mu(z){}
	\le t,
	\]
	for a constant \(c > 0\) depending on \(\alpha\) and \(N\).{}
	
	Using the Whitney covering of \(\R^{N} \setminus F\), we now decompose the measure \(\mu\) as
	\[{}
	\mu 
	= \mu\lfloor_{F}{} + \mu\lfloor_{\R^{N} \setminus F}{}
	= \mu\lfloor_{F}{} + \sum_{n = 0}^{\infty}{\mu\lfloor_{Q_{n}}},
	\]
	which we further write as
	\begin{equation}
	\label{eqDecompositionCZThm}
	\mu = \underbrace{\mu\lfloor_{F}{} + \sum_{n = 0}^{\infty}{\frac{\mu(Q_{n})}{\abs{Q_{n}}} \chi_{Q_{n}} \dif x}}_{g \dif x}  + \underbrace{\sum_{n = 0}^{\infty}{b_{n}}}_{b}, 
	\end{equation}
	where 
	\[{}
	b_{n} \vcentcolon= \mu\lfloor_{Q_{n}}{} - \frac{\mu(Q_{n})}{\abs{Q_{n}}} \, \chi_{Q_{n}} \dif x,
	\]
	so that \(b_{n}(Q_{n}) = 0\) and \(|b_{n}|(\R^{N} \setminus Q_{n}) = 0\) for every \(n \in \N\).
	We refer the reader to the excellent introductions~\cites{Stein1970,Grafakos} for a detailed explanation of the Whitney covering of a set and the subsequent Calderón-Zygmund decomposition of a function or a measure.
	
	The term denoted by \(g \dif x\) in \eqref{eqDecompositionCZThm} is the good part of the measure and is absolutely continuous with respect to the Lebesgue measure.
	The absolute continuity of \(\mu\lfloor_{F}\) follows from the definition of \(F\), which implies that \(\abs{\mu}\lfloor_{F}(A) \le t \abs{A}\) 
	for every Borel subset \(A \subset \R^{N}\).
	By the Radon-Nikodym theorem, \(\mu\lfloor_{F}\) can be written as \(\mu\lfloor_{F}{} = f \dif x\) with \(f \in L^{1}(\R^{N})\) such that \(\abs{f} \le t\).{}
	
	We now observe that the density \(g\) belongs to \((L^{1} \cap L^{\infty})(\R^{N})\) and satisfies
	\begin{equation}
		\label{eqGoodPart}
	\norm{g}_{L^{1}(\R^{N})}
	\le \norm{\mu}_{\cM(\R^{N})}
	\quad{}
	\text{and}
	\quad{}
	\norm{g}_{L^{\infty}(\R^{N})}
	\le \Cl{cte-774} t.
	\end{equation}
	Indeed, we have \(\abs{g} = \abs{f} \le t\) on \(F\) and, by \eqref{eqMeasureCubes}, \(\abs{g} \le \Cr{cte-655} t \) on each \(Q_{n}\).
	Thus, the \(L^{\infty}\) bound of \(g\) holds with \(\Cr{cte-774} \vcentcolon= \max{\{1, \Cr{cte-655}\}}\).
	Since the cubes \(Q_{n}\) are disjoint, by additivity of \(\abs{\mu}\) we also have
	\[{}
	\norm{g}_{L^{1}(\R^{N})}
	= \abs{\mu}(F) + \sum_{n = 0}^{\infty}{\abs{\mu(Q_{n})}}
	\le \abs{\mu}(\R^{N})
	= \norm{\mu}_{\cM(\R^{N})}. 
	\]
	Hence, \eqref{eqGoodPart} is satisfied. 
	Since \(K \in (L^{1} + L^{\infty})(\R^{N})\) and \(g \in (L^{1} \cap L^{\infty})(\R^{N})\), we have
	\[{}
	\Dom{(K * g)} = \R^{N}
	\]
	and	the convolution \(K * g\) is continuous in \(\R^{N}\).
	
	As \(g \in (L^{1} \cap L^{\infty})(\R^{N})\), by interpolation between \(L^{1}\) and \(L^{\infty}\), we have \(g \in L^{2}(\R^{N})\). 
	We deduce from \Cref{L2caseNew} that \(\nabla(K * g) \in L^{2}(\R^{N})\).
	By  \Cref{MaximalandSobolev} and continuity of \(K * g\), we then have
	\begin{equation}
	\label{eqGoodLipschitz}
	\abs{K * g(x) - K * g(y)}
	\le (J(x) + J(y)) \abs{x - y}
	\quad
	\text{for every \(x, y \in \R^{N}\),}
	\end{equation}
	with \(J = 2^{N} \cM{\abs{\nabla(K * g)}}\).{}
	
	We now focus on \(b\), which is the bad part of the measure \(\mu\):
	\begin{Claim}
		There exists a Lebesgue-negligible set \(S \subset \R^{N}\) such that \(\Dom{(K * b)} \supset \R^{N} \setminus S\) and 
	\begin{equation}
		\label{eqDecompositionCZPointwiseBad}
		K * b(x){}
		= \sum_{n = 0}^{\infty}{K * b_{n}(x)}
		\quad \text{for  every \(x \in \R^{N} \setminus S\).} 
	\end{equation}
	\end{Claim}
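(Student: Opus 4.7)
The plan is to check that $|b|$ is a finite Borel measure and that $|K|\ast|b|$ is locally integrable; the claim then follows from Fubini--Tonelli combined with a dominated-convergence argument.

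First I would verify that $|b|(\R^N) < \infty$. Since the cubes $(Q_n)$ are pairwise disjoint and each $b_n$ is supported in $Q_n$, we have $|b| = \sum_n |b_n|$ as Borel measures. The triangle inequality applied to the definition of $b_n$ gives $|b_n|(\R^N) \leq 2|\mu|(Q_n)$, hence
\[
|b|(\R^N) \le 2\sum_n |\mu|(Q_n) = 2|\mu|(\R^N\setminus F) \le 2\norm{\mu}_{\cM(\R^N)}.
\]

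Next, for an arbitrary bounded open set $E \subset \R^N$, Tonelli's theorem yields
\[
\int_E (|K|\ast|b|)(x)\dif x = \int_{\R^N} \biggl(\int_E |K(x-y)|\dif x\biggr) \dif|b|(y).
\]
The pointwise bound $|K(z)|\le A|z|^{-(N-1)}$ implies that the inner integral is bounded above uniformly in $y \in \R^N$ by a constant $c(E)$, by treating separately the cases of $y$ in a bounded neighbourhood of $E$ (where the integrable singularity of $|z|^{-(N-1)}$ is used) and of $y$ far from $E$ (where the integrand is uniformly small). The right-hand side is therefore at most $c(E)\,|b|(\R^N) < \infty$, so $(|K|\ast|b|)(x) < \infty$ for a.e.\ $x \in E$. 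Exhausting $\R^N$ by bounded open sets produces the desired Lebesgue-negligible set $S$ such that $\R^N \setminus S \subset \Dom{(K\ast b)}$.

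Finally, at every $x \in \R^N \setminus S$, the function $y \mapsto K(x-y)$ is $|b|$-integrable. The decomposition $b(A) = \sum_n b_n(A)$ valid for every Borel set $A$, together with the absolute convergence
\[
\sum_n \int_{\R^N} |K(x-y)|\dif|b_n|(y) = (|K|\ast|b|)(x) < \infty,
\]
justifies the interchange of sum and integral and gives
\[
K\ast b(x) = \int_{\R^N} K(x-y)\dif b(y) = \sum_{n=0}^\infty \int_{\R^N} K(x-y)\dif b_n(y) = \sum_{n=0}^\infty K\ast b_n(x),
\]
which is the claimed identity. No serious obstacle is expected; the only mildly delicate step is the uniform bound on $\int_E |K(x-y)|\dif x$ used above, which is routine given the integrable singularity of $K$ on bounded sets.
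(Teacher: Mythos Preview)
Your proof is correct and follows essentially the same approach as the paper: both show that \(\abs{K} * \abs{b}\) (equivalently \(\sum_n \abs{K} * \abs{b_n}\), since the \(b_n\) have disjoint supports) is locally integrable via Tonelli and the decay bound on \(K\), and then invoke dominated convergence for the termwise identity. The only cosmetic difference is that the paper obtains the uniform bound on \(\int_{B_r(0)}\abs{K(x-y)}\dif x\) in one stroke via the inequality \(\int_{B_r(0)}\abs{x-y}^{-(N-1)}\dif x \le \int_{B_r(y)}\abs{x-y}^{-(N-1)}\dif x\), avoiding your case split between near and far \(y\).
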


	\begin{proof}[Proof of the Claim]
	Let
	\begin{equation}
		\label{eqDecompositionSetS}
		S \vcentcolon= \biggl\{ z \in \R^{N} : \sum_{n = 0}^{\infty}{\abs{K} * \abs{b_{n}}(z)} = \infty \biggr\}.
	\end{equation}
	For \(x \in \R^{N} \setminus S\), we have \(\sum\limits_{n = 0}^{\infty}{\abs{K} * \abs{b_{n}}(x)} < \infty\).{}
	Then, by Fatou's lemma, \(\abs{K}*\abs{b}(x) < \infty\) and so
	\[{}
	\Dom{(K * b)} \supset \R^{N} \setminus S.
	\] 
	The Dominated convergence theorem implies \eqref{eqDecompositionCZPointwiseBad}.
	To prove that \(S\) is negligible, we proceed as follows.
	By Fubini's theorem and assumption \eqref{eqKernel}, for every \(r > 0\) and \(n \in \N\) we have
	\[{}
	\begin{split}
	\int_{B_{r}(0)}{\abs{K} * \abs{b_{n}}}
	& = \int_{\R^{N}}{\biggl( \int_{B_{r}(0)}{\abs{K(x - y)} \dif x} \biggr)} \dif\abs{b_{n}}(y)\\
	& \le A \int_{\R^{N}}{\biggl( \int_{B_{r}(0)}{\frac{\dif x}{\abs{x - y}^{N-1}}} \biggr)} \dif\abs{b_{n}}(y).
	\end{split}
	\]
	For every \(y \in \R^{N}\),{}
	\[{}
	\int_{B_{r}(0)}{\frac{\dif x}{\abs{x - y}^{N-1}}}
	\le \int_{B_{r}(y)}{\frac{\dif x}{\abs{x - y}^{N-1}}}
	= \int_{B_{r}(0)}{\frac{\dif z}{\abs{z}^{N-1}}}
	= \Cl{cte-996} r.
	\]
	Thus,
	\[
	\int_{B_{r}(0)}{\abs{K} * \abs{b_{n}}}
	\le A \Cr{cte-996}r \int_{\R^{N}}{\dif\abs{b_{n}}}
	= \Cl{cte-1001} \abs{b_{n}}(\R^{N})
	\le 2\Cr{cte-1001} \abs{\mu}(Q_{n}),
	\]
	for some constant \(\Cr{cte-1001} > 0\) depending on \(r\).
	Since the cubes \(Q_{n}\) are disjoint, by Fatou's lemma and by additivity of the measure \(\abs{\mu}\) we then get
	\[{}
	\begin{split}
	\int_{B_{r}(0)}{\biggl(\,\sum_{n = 0}^{\infty}{\abs{K} * \abs{b_{n}}} \biggr)}
	&\le {\sum_{n = 0}^{\infty}{\int_{B_{r}(0)} \abs{K} * \abs{b_{n}}}}\\
	&\le 2\Cr{cte-1001} \sum_{n = 0}^{\infty}{\abs{\mu}(Q_{n})}
	= 2\Cr{cte-1001} \abs{\mu}(\R^{N} \setminus F)
	< \infty.{}
	\end{split}
	\]
	Hence, \(\sum\limits_{n = 0}^{\infty}{\abs{K} * \abs{b_{n}}} < \infty\) almost everywhere in \(B_{r}(0)\), for every \(r > 0\). 
	We conclude that \(S\) is Lebesgue-negligible.{}
	\end{proof}
	
	As a consequence of the Claim,
	\[{}
	\Dom{(K * \mu)} \supset \Dom{(K * g)} \cap \Dom{(K * b)} \supset \R^{N} \setminus S
	\]
	and, from linearity of the convolution,
	\begin{equation}
		\label{eqDecompositionCZPointwise}
		K*\mu(x){}
		= K*g(x) + \sum_{n = 0}^{\infty}{K*b_{n}(x)}
		\quad \text{for every \(x \in \R^{N} \setminus S\).} 
	\end{equation}

	By \Cref{propositionDifficultCase} with \(\theta = 2\), each measure \(b_{n}\) satisfies
	\[{}
	\abs{K * b_{n}(x) - K * b_{n}(y)}
	\le (I_{n}(x) + I_{n}(y)) \abs{x - y}
	\quad \text{for every \(x, y \in \R^{N} \setminus 2Q_{n}\),}
	\]
	where \(I_{n} : \R^{N} \setminus 2Q_{n} \to {}[0, \infty)\) is a bounded continuous function such that
	\begin{equation}
		\label{eqEstimateIn}
	\norm{I_{n}}_{L^{1}(\R^{N} \setminus 2Q_{n})}
	\le C' \abs{b_{n}}(Q_{n})
	\le 2 C' \abs{\mu}(Q_{n}).
	\end{equation}
	By \eqref{eqInclusionF} and \eqref{eqDecompositionCZPointwiseBad}, we thus have
	\begin{equation}
		\label{eqBadLipschitz}
	\abs{K * b(x) - K * b(y)}
	\le \biggl(\,\sum_{n=0}^{\infty}{I_{n}(x)} + \sum_{n=0}^{\infty}{I_{n}(y)}\biggr) \abs{x - y}
	\quad \text{for every \(x, y \in F \setminus S\).}
	\end{equation}
	
	Combining \eqref{eqGoodLipschitz} and \eqref{eqBadLipschitz}, we get \eqref{eqSingularIntegralLipschitz} with
	\[{}
	I \vcentcolon= 
	\begin{cases}
		J + \sum\limits_{n = 0}^{\infty}{I_{n}}
		& \text{in \(F \setminus S\),}\\
		\infty{}
		& \text{in \((\R^{N} \setminus F) \cup S\).}{}
	\end{cases}
	\]
	To prove \eqref{eqSingularIntegralMeasure}, we first observe that by subadditivity of the Lebesgue measure,
	\begin{equation}
		\label{eqEstimateLevelSets}
	\abs{\{I > t\}}
	\le \abs{\{J > t/2\}} + \Bigabs{\Bigl\{\sum_{n=0}^{\infty}{I_{n}} > t/2 \Bigr\} \cap F} + \abs{\R^{N} \setminus F} + \abs{S}.
	\end{equation}
	As \(S\) is Lebesgue-negligible, \(\abs{S} = 0\).
	Since \(J = 2^{N} \cM{\abs{\nabla(K * g)}}\), by the \(L^{2}\)-maximal inequality and \Cref{L2caseNew} we have
	\[
	\norm{J}_{L^{2}(\R^{N})}
	\le \C \norm{\nabla(K * g)}_{L^{2}(\R^{N})}
	\le \Cl{cte-701} \norm{g}_{L^{2}(\R^{N})}.
	\]
	We also have by \eqref{eqGoodPart} and interpolation between Lebesgue spaces,
	\[{}
	\norm{g}_{L^{2}(\R^{N})}^{2}
	\le \norm{g}_{L^{1}(\R^{N})} \norm{g}_{L^{\infty}(\R^{N})}  
	\le \Cr{cte-774} \, t \norm{\mu}_{\cM(\R^{N})}.
	\]
	By the Markov-Chebyshev inequality at height \(t/2\), we thus get
	\begin{equation}
		\label{eqSingularIntegral1}
	\abs{\{J > t/2\}} 
	\le \Bigl(\frac{2}{t}\Bigr)^{2} \norm{J}_{L^{2}(\R^{N})}^{2}
	\le \frac{\C}{t} \norm{\mu}_{\cM(\R^{N})},
	\end{equation}
	which gives the estimate of the first term in the right-hand side of \eqref{eqEstimateLevelSets}.

	By \eqref{eqEstimateIn} and the fact that the cubes \(Q_{n}\) are disjoint and contained in \(\R^{N} \setminus F\),
	\begin{equation}
	\label{eqEstimateMarcinkiewicz}
	\Bignorm{\sum_{n=0}^{\infty}{I_{n}}}_{L^{1}(F)}
	 \le \sum_{n=0}^{\infty}{\norm{I_{n}}_{L^{1}(\R^{N} \setminus 2Q_{n})}}
	 \le 2 C' \sum_{n=0}^{\infty}{\abs{\mu}(Q_{n})}
	 \le 2 C' \abs{\mu}(\R^{N} \setminus F),
	\end{equation}
	which can also be deduced from a classical inequality in Harmonic analysis; see \Cref{remarkMarcinkiewicz} below.	
	By the Markov-Chebyshev inequality at height \(t/2\), we then have
	\begin{equation}
		\label{eqSingularIntegral3}
	\Bigabs{\Bigl\{\sum_{n=0}^{\infty}{I_{n}} > t/2 \Bigr\} \cap F} 
	\le \frac{4 C'}{t} \abs{\mu}(\R^{N} \setminus F) 
	\le \frac{4 C'}{t} \norm{\mu}_{\cM(\R^{N})}.
	\end{equation}
	Inequality \eqref{eqSingularIntegralMeasure} then follows from \eqref{eqEstimateLevelSets}, \eqref{eqSingularIntegral1}, \eqref{eqSingularIntegral3} and  \eqref{eqWeakMaximalInequality}.

	Observe that \(K * \mu\) is defined in 
	\[{}
	\{I_{t} < \infty\} \subset \R^{N} \setminus S,
	\]
	where for the rest of proof we make explicit the dependence of \(I = I_{t}\) with respect to the parameter \(t\).{}
	We conclude from \eqref{eqSingularIntegralLipschitz} and \Cref{appdiff} that \(K * \mu\) is approximately differentiable almost everywhere in \(\{I_{t} < \infty\}\).
	Taking a sequence \((t_{j})_{j \in \N}\) of positive numbers with \(t_{j} \to \infty\), we have
	\[{}
	\R^{N} \setminus \bigcup\limits_{j = 0}^{\infty}{\{I_{t_{j}} < \infty\}}
	\subset \bigcap_{j = 0}^{\infty}{\{I_{t_{j}} > t_{j}\}}
	\] 
	As a consequence of \eqref{eqSingularIntegralMeasure}, the set in the right-hand side is Lebesgue-negligible and we deduce that \(K * \mu\) is approximately differentiable almost everywhere in \(\R^{N}\).{}
	Since the approximate derivative \(\apD(K * \mu)\) satisfies
	\[{}
	\abs{\apD{(K * \mu)}} 
	\le 2 t
	\quad \text{almost everywhere in \(\{I_{t} \le t\}\),}
	\]
	we have
	\[{}
	\bigabs{\{\abs{\apD{(K * \mu)}} > 2t\}}
	 \le \abs{\{I_{t} > t\}} \le \frac{\Cr{cte-647}}{t} \norm{\mu}_{\cM(\R^{N})}
	 \quad \text{for every \(t > 0\)}.
	\]
	This implies the weak-\(L^{1}\) estimate of \(\apD{(K * \mu)}\).
\end{proof}

\resetconstant
\begin{remark}
	\label{remarkMarcinkiewicz}	
	Estimate~\eqref{eqEstimateMarcinkiewicz} also follows from the classical inequality satisfied by the Marcinkiewicz integral, see p.~15 of \cite{Stein1970}:
	\begin{equation}
	\label{eqMarcinkiewiczRemark}
	\int_{F}{\int_{\R^{N} \setminus F}{\frac{d(z, F)}{\abs{y - z}^{N + 1}} \dif\abs{\mu}(z)} \dif y}
	\le C'' \abs{\mu}(\R^{N} \setminus F).
	\end{equation}
	The reason is that in the proof of \Cref{propositionDifficultCase}, see \eqref{eqChoiceI}, we choose
	\[{}
	I_{n}(y) = \frac{\Cl{cte-899} \ell_{n}}{\abs{y - \bar{z}_{n}}^{N+1}} \norm{b_{n}}_{\cM(\R^{N})}
	\quad \text{for every \(y \in \R^{N} \setminus 2Q_{n}\),}
	\]
	where \(\bar{z}_{n}\) is the center of the cube \(Q_{n}\), and this function \(I_{n}\) is controlled by the Marcinkiewicz integral of the measure \(\abs{\mu}\lfloor_{Q_{n}}\), namely 
	\[{}
	\int_{Q_n} \frac{d(z, F)}{|y-z|^{N+1}}d|\mu|(z).{}
	\]
	Indeed, by construction of the Whitney covering, for every $z\in Q_n$ we have $\ell_n/2 \leq d(z, F)$ and, for every \(y \in \R^{N} \setminus 2Q_{n}\),
	\[{}
	|y - z|
	\le |y - \bar{z}_n| + |z - \bar{z}_n|
	\le |y - \bar{z}_n| + \sqrt{N} \, \frac{\ell_n}{2} 
	\le \biggl( 1 + \frac{\sqrt{N}}{2} \biggr) |y - \bar{z}_n|.
	\]  
	Thus,
	\begin{equation}
	\label{eqComparisonMarcinkiewicz}
	I_{n}(y)
	\le \frac{\Cr{cte-899} \ell_{n}}{\abs{y - \bar{z}_{n}}^{N+1}} \, 2 \int_{Q_{n}}{\dif\abs{\mu}(z)}
	\le \Cl{cte-1105} \int_{Q_n} \frac{d(z, F)}{|y-z|^{N+1}}d|\mu|(z).
	\end{equation}
	Therefore, for every \(y \in F\),
	\[{}
	\sum_{n = 0}^{\infty}{I_{n}(y)}
	\le \Cr{cte-1105} {\int_{\bigcup\limits_{n=0}^{\infty}{Q_n}} \frac{d(z, F)}{|y-z|^{N+1}}d|\mu|(z)}
	= \Cr{cte-1105} {\int_{\R^{N} \setminus F} \frac{d(z, F)}{|y-z|^{N+1}}d|\mu|(z)}
	\]
	and then \eqref{eqEstimateMarcinkiewicz} is implied by \eqref{eqMarcinkiewiczRemark}.
	We shall return to this observation in the proof of \Cref{theoremDifferentiability}.
\end{remark}

\section{Proof of \Cref{propositionDifficultCase}}
\label{sectionProofProposition}

In the next two lemmas we rely on the notation of \Cref{propositionDifficultCase} 
and, in particular, \(\nu\) is a finite Borel measure in \(\R^{N}\) with
\[{}
\nu(Q) = 0
\quad \text{and} \quad{}
\abs{\nu}(\R^{N} \setminus Q) = 0.
\]
We also write \(\bar{z}\) for the center of the cube \(Q\) and \(\ell\) for its side-length.
The first lemma gives an estimate of the decay of \(K * \nu(x)\) as \(\abs{x} \to \infty\) and is used in the proof of \Cref{propositionDifficultCase} when \(x\) and \(y\) are far from each other relatively to their distances to \(Q\).

\begin{lemma}\label{lemmaDecayEstimateFar}
For every $x \in \R^N\setminus \theta Q$,
\begin{equation*}
\abs{K\ast \nu (x)}\leq \frac{C''\ell}{|x-\bar{z}|^{N}}\|\nu\|_{\M(\R^N)}.
\end{equation*}
\end{lemma}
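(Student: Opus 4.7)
The plan is to exploit the cancellation hypothesis $\nu(Q)=0$ to rewrite $K\ast\nu(x)$ as a convolution of differences of $K$, then control these differences by a gradient estimate.

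Since $\nu$ is supported in $Q$ and $\nu(Q)=0$, we have $\int_{Q}\,d\nu=0$. Thus for any $x\in\R^{N}\setminus\theta Q$ and any constant we may subtract, in particular the value $K(x-\bar z)$ (which is well defined since $\bar z\ne x$), giving
\[
K\ast\nu(x)=\int_{Q}K(x-y)\,d\nu(y)=\int_{Q}\bigl[K(x-y)-K(x-\bar z)\bigr]\,d\nu(y).
\]
So the problem reduces to a pointwise estimate of $|K(x-y)-K(x-\bar z)|$ for $y\in Q$.

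For that, I would invoke the first-order kernel estimate \eqref{eqKernel1}, namely $|DK(z)|\le \Cr{cte-479}/|z|^{N}$, obtained from interpolation of \eqref{eqKernel}, and combine it with the mean-value theorem on the segment joining $x-y$ and $x-\bar z$. The key geometric observation is that because $\theta>1$, every point $x\in\R^{N}\setminus\theta Q$ satisfies $|x-\bar z|\ge c_{\theta,N}\ell$, while $|y-\bar z|\le \tfrac{\sqrt{N}}{2}\ell$ for $y\in Q$; hence every point $\xi$ on the segment from $x-y$ to $x-\bar z$ satisfies $|\xi|\ge c'_{\theta,N}|x-\bar z|$ for some constant $c'_{\theta,N}>0$ depending only on $\theta$ and $N$. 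Using $|y-\bar z|\le \tfrac{\sqrt{N}}{2}\ell$ once more to bound the length of the segment, one obtains
\[
|K(x-y)-K(x-\bar z)|\le |y-\bar z|\sup_{\xi}|DK(\xi)|\le \frac{C''\ell}{|x-\bar z|^{N}}.
\]

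Plugging this into the integral representation above and integrating against $d|\nu|$ yields
\[
|K\ast\nu(x)|\le \frac{C''\ell}{|x-\bar z|^{N}}\int_{Q}d|\nu|=\frac{C''\ell}{|x-\bar z|^{N}}\|\nu\|_{\M(\R^{N})},
\]
which is the asserted estimate. There is no real obstacle here: the only point worth double-checking is the explicit dependence of the constants $c_{\theta,N}$ and $c'_{\theta,N}$ on $\theta$, which is what forces the rescaling by $\theta>1$ in the statement and ensures that $x-y$ stays uniformly away from the origin for all $y\in Q$ so that \eqref{eqKernel1} may be applied on the whole segment.
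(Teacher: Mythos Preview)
Your proposal is correct and follows essentially the same approach as the paper: both use the cancellation $\nu(Q)=0$ to subtract $K(x-\bar z)$, apply the mean-value theorem together with the interpolated gradient bound \eqref{eqKernel1}, and verify the geometric fact that the segment stays uniformly away from the singularity. The paper carries out the last step explicitly via the max-norm to obtain $(1-1/\theta)|x-\bar z|_\infty\le|x-\zeta|_\infty$, which is precisely the content of your constant $c'_{\theta,N}$.
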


\resetconstant
\begin{proof}
Since $\nu(Q)=0$, we can write 
\begin{equation}
	\label{eqFarLipschitz}
K \ast \nu(x) 
=\int_Q K(x-z) \dif \nu(z)
= \int_Q [K(x-z)-K(x-\bar{z})] \dif \nu(z). 
\end{equation}
The estimate of the integrand that we require is given by 

\begin{Claim}
	For every \(z \in Q\) and \(x \in \R^N \setminus \theta Q\),{}
	\[{}
	\abs{K(x-z)-K(x-\bar{z})}
	\le \Cl{cte-775} \frac{\abs{z - \bar{z}}}{\abs{x - \bar{z}}^{N}},
	\]
	where \(\Cr{cte-775} > 0\) depends on \(\theta\), \(A\), \(B\) and \(N\).
\end{Claim}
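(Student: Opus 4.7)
The plan is to combine the first-order bound \(\abs{DK(w)} \le \Cr{cte-496}/\abs{w}^{N}\) from~\eqref{eqKernel1}, obtained earlier in the excerpt by interpolation from~\eqref{eqKernel}, with the fundamental theorem of calculus along the segment from \(x - \bar{z}\) to \(x - z\). The whole difficulty will be concentrated in a geometric lemma comparing \(\abs{x - \xi}\) to \(\abs{x - \bar{z}}\) uniformly for \(\xi \in Q\), and this is where I expect the main obstacle, since the hypothesis only gives \(\theta > 1\) and the cube \(Q\) may be much larger than \(\abs{x - \bar{z}}\).

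I would set \(\xi_{t} \vcentcolon= (1 - t)\bar{z} + tz\) and \(w_{t} \vcentcolon= x - \xi_{t}\) for \(t \in [0, 1]\). By convexity of \(Q\), each \(\xi_{t}\) lies in \(Q\). Once a uniform lower bound \(\abs{w_{t}} \ge c_{\theta, N}\, \abs{x - \bar{z}}\) is established, the segment avoids the origin, \(DK(w_{t})\) makes sense, and
\[
K(x - z) - K(x - \bar{z}) = \int_{0}^{1}{DK(w_{t}) \cdot (\bar{z} - z) \dif t}.
\]
Plugging the first-order bound then yields
\[
\abs{K(x - z) - K(x - \bar{z})}
\le \abs{z - \bar{z}} \int_{0}^{1}{\frac{\Cr{cte-496}}{\abs{w_{t}}^{N}} \dif t}
\le \frac{\Cr{cte-496}/c_{\theta, N}^{N}}{\abs{x - \bar{z}}^{N}}\, \abs{z - \bar{z}},
\]
which is exactly the announced inequality with a constant depending only on \(\theta, A, B, N\).

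The core of the argument is therefore the uniform lower bound on \(\abs{w_{t}}\). I would split into two regimes according to the size of \(\abs{x - \bar{z}}\) relative to the side-length \(\ell\) of \(Q\). If \(\abs{x - \bar{z}} \ge \sqrt{N}\,\ell\), then \(\abs{\xi_{t} - \bar{z}} \le \abs{z - \bar{z}} \le \sqrt{N}\,\ell/2\) gives at once, by the triangle inequality,
\[
\abs{w_{t}} \ge \abs{x - \bar{z}} - \abs{\xi_{t} - \bar{z}} \ge \tfrac{1}{2}\, \abs{x - \bar{z}}.
\]
If instead \(\abs{x - \bar{z}} < \sqrt{N}\,\ell\), the triangle inequality alone is too weak, and I would exploit the hypothesis \(x \notin \theta Q\) coordinate-wise: there exists an index \(i\) such that \(\abs{x_{i} - \bar{z}_{i}} > \theta\ell/2\), while \(\abs{(\xi_{t})_{i} - \bar{z}_{i}} \le \ell/2\), so that
\[
\abs{w_{t}} \ge \abs{x_{i} - (\xi_{t})_{i}} \ge \tfrac{\theta - 1}{2}\, \ell > \tfrac{\theta - 1}{2\sqrt{N}}\, \abs{x - \bar{z}}.
\]
In both cases one obtains \(\abs{w_{t}} \ge c_{\theta, N}\, \abs{x - \bar{z}}\) with \(c_{\theta, N} \vcentcolon= \min{\{1/2,\, (\theta - 1)/(2\sqrt{N})\}} > 0\). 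Combining with the integral representation above produces the estimate of the claim, the final constant \(\Cr{cte-775}\) absorbing the dependence on \(A\) and \(B\) hidden in~\eqref{eqKernel1} together with \(c_{\theta, N}^{-N}\).
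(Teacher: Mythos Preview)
Your proposal is correct and follows the same approach as the paper: apply the mean value theorem (equivalently, the fundamental theorem of calculus) along the segment $[\bar z, z]$, invoke the gradient bound \eqref{eqKernel1}, and establish a geometric lower bound $|x - \xi| \ge c_{\theta,N}\,|x - \bar z|$ for all $\xi \in Q$. The paper's version of the geometric step is slightly cleaner, working directly in the max-norm to get $(1 - 1/\theta)\,|x - \bar z|_\infty \le |x - \zeta|_\infty$ in one line, which avoids your two-case split; incidentally, your concern that $Q$ might be ``much larger than $|x - \bar z|$'' is unfounded, since $x \notin \theta Q$ already forces $|x - \bar z|_\infty \ge \theta\ell/2$.
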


\begin{proof}[Proof of the Claim]
	By the Mean value theorem, there exists a point \(\zeta\) in the line segment $[z, \bar{z}]$ that joins \(z\) and \(\bar{z}\) such that
	\[{}
	K(x-z)-K(x-\bar{z})
	= \nabla K(x - \zeta) \cdot (z - \bar{z}).
	\]
	By \eqref{eqKernel1}, we thus have
	\begin{equation}
		\label{eqFarMeanValue}
	\abs{K(x-z)-K(x-\bar{z})}
	\le \C \frac{\abs{z - \bar{z}}}{\abs{x - \zeta}^{N}}.
	\end{equation}
	Since \(\zeta \in Q\), we have \(\abs{\zeta - \bar{z}}_{\infty} \le \ell/2\), where \(\abs{y}_{\infty}\) denotes the max-norm of a vector \(y = (y_{1}, \dots, y_{N})\) in \(\R^{N}\), i.e.
\[{}
\abs{y}_{\infty}
\vcentcolon= \max{\{\abs{y_{1}}, \dots, \abs{y_{N}}\}}.
\]
	For $x \in \R^N \setminus \theta Q$, we also have \(\abs{x - \bar{z}}_{\infty} \ge {\theta\ell}/{2}\).{}
	Thus, by the triangle inequality,
	\[
	|x-\bar{z}|_{\infty} 
	\leq | x- \zeta|_{\infty} +|\zeta - \bar{z}|_{\infty}
	\leq |x-\zeta|_{\infty} + \frac{\ell}{2}
	\leq |x-\zeta|_{\infty} + \frac{1}{\theta} |x-\bar{z}|_{\infty}.
	\]
	This yields 
	\begin{equation*}
	\Bigl(1 - \frac{1}{\theta} \Bigr) |x-\bar{z}|_{\infty} 
	\leq |x-\zeta|_{\infty}
	\end{equation*}
	and then a similar estimate is satisfied by the Euclidean norm.
	The Claim thus follows from such an estimate and \eqref{eqFarMeanValue}.
\end{proof}

From \eqref{eqFarLipschitz} and the Claim, we then get
\begin{equation*}
\abs{ K \ast \nu (x) } 
\leq  \frac{\Cl{cte-832}\ell}{|x-\bar{z}|^N} \int_{Q} \dif|\nu|(z)
= \frac{\Cr{cte-832} \ell}{|x-\bar{z}|^N} \, \norm{\nu}_{\cM(\R^{N})}.
\qedhere
\end{equation*}
\end{proof}

The next lemma deals with the case where \(x\) and \(y\) are close to each other relatively to their distances to \(Q\):

\begin{lemma}\label{lemmaDecayEstimateClose}
There exists \(\epsilon > 0\), depending on \(\theta\) and \(N\), such that for every \(x, y \in \R^N\setminus \theta Q\) with \(\abs{x - y} \le \epsilon\abs{x - \bar{z}}\), we have
\[
\abs{K\ast \nu (x) - K\ast \nu (y)}
\leq \frac{C''' \ell}{|x - \bar{z}|^{N+1}}  \|\nu\|_{\M(\R^N)} \, \abs{x - y}.
\]
In particular,  for every \(x \in \R^N\setminus \theta Q\),
\[{}
\abs{\nabla(K * \nu)(x)}
\le \frac{C''' \ell}{|x - \bar{z}|^{N+1}}  \|\nu\|_{\M(\R^N)}.
\]
\end{lemma}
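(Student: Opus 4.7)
The plan is to exploit the cancellation $\nu(Q)=0$ together with a second-order Taylor estimate on $K$, mimicking the first-order argument used in \Cref{lemmaDecayEstimateFar} but now applied to the difference $K(x-\cdot)-K(y-\cdot)$. Since $|\nu|$ is concentrated on $Q$, we may write, for any $x,y \in \R^N \setminus \theta Q$,
\[
K * \nu(x) - K * \nu(y) = \int_Q \bigl[ \bigl(K(x-z) - K(y-z)\bigr) - \bigl(K(x-\bar z) - K(y-\bar z)\bigr) \bigr] \dif\nu(z),
\]
the subtraction of the bracket being legitimate because $\nu(Q)=0$.

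Setting $F(z) \vcentcolon= K(x-z) - K(y-z)$, two applications of the Mean value theorem give a point $\zeta$ on the segment $[z,\bar z]$ and a point $\tilde\eta$ on the segment $[x-\zeta,y-\zeta]$ such that
\[
F(z)-F(\bar z) = -D^2 K(\tilde\eta)\,(z-\bar z,\,y-x).
\]
By \eqref{eqKernel}, this is bounded by $B\abs{z-\bar z}\abs{x-y}/\abs{\tilde\eta}^{N+1}$, so everything boils down to a lower bound of the form $\abs{\tilde\eta} \geq c(\theta,N)\,\abs{x-\bar z}$. The argument in the Claim of \Cref{lemmaDecayEstimateFar} already provides a constant $c_0 = c_0(\theta,N) > 0$ such that $\abs{x-\zeta} \geq c_0 \abs{x-\bar z}$ for every $\zeta \in Q$ and $x \in \R^N \setminus \theta Q$ (with Euclidean norm, after comparing with the max-norm estimate there). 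Since $\tilde\eta = x-\zeta + t(y-x)$ for some $t \in [0,1]$, the triangle inequality gives $\abs{\tilde\eta} \geq \abs{x-\zeta} - \abs{y-x} \geq (c_0-\epsilon)\abs{x-\bar z}$. Choosing $\epsilon \vcentcolon= c_0/2$ then yields $\abs{\tilde\eta} \geq (c_0/2)\abs{x-\bar z}$.

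Combining these estimates and using $\abs{z-\bar z} \leq \tfrac{\sqrt{N}}{2}\ell$ for $z\in Q$, we get
\[
\abs{F(z)-F(\bar z)} \leq \frac{C_1\,\ell}{\abs{x-\bar z}^{N+1}}\,\abs{x-y}
\quad \text{for every \(z \in Q\),}
\]
with $C_1 = C_1(A,B,N,\theta)$. Integrating with respect to $\abs{\nu}$ over $Q$ and recalling $\abs{\nu}(Q) = \|\nu\|_{\cM(\R^N)}$ produces the asserted estimate for $\abs{K*\nu(x)-K*\nu(y)}$.

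For the gradient bound, observe that $\abs{\nu}$ is supported on $Q$ and $K\in C^2(\R^N\setminus\{0\})$, so differentiation under the integral sign shows that $K*\nu \in C^1(\R^N\setminus Q)$ and its classical gradient coincides with the distributional one on $\R^N \setminus \theta Q$. Fixing $x\in\R^N\setminus\theta Q$ and letting $y\to x$ along an arbitrary direction in the first inequality (which applies once $\abs{x-y}\le\epsilon\abs{x-\bar z}$) yields
\[
\abs{\nabla(K*\nu)(x)} \leq \frac{C'''\ell}{\abs{x-\bar z}^{N+1}}\,\|\nu\|_{\cM(\R^N)},
\]
as claimed. The only genuine obstacle is the geometric lemma asserting $\abs{\tilde\eta} \gtrsim \abs{x-\bar z}$; everything else is a routine application of \eqref{eqKernel} and the hypothesis $\nu(Q)=0$.
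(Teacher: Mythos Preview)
Your proposal is correct and follows essentially the same route as the paper: subtract the center value $K(\cdot-\bar z)$ using $\nu(Q)=0$, then control the resulting second difference by the second-derivative bound in \eqref{eqKernel} together with the geometric inequality $\abs{\tilde\eta}\gtrsim\abs{x-\bar z}$. The only cosmetic difference is the order of operations: the paper first applies the Fundamental Theorem of Calculus along the segment $[x,y]$ and then the Mean Value Theorem along $[z,\bar z]$ to $\nabla K$, whereas you apply the Mean Value Theorem twice (first in $z$, then in the $x$--$y$ direction); both yield the same bilinear bound $B\abs{z-\bar z}\abs{x-y}/\abs{\tilde\eta}^{N+1}$, and your choice $\epsilon=c_0/2$ with $c_0=(1-1/\theta)/\sqrt{N}$ matches the paper's choice $\epsilon\sqrt{N}=\tfrac{\theta-1}{2\theta}$.
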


\resetconstant
\begin{proof}
Applying the Fundamental theorem of Calculus and Fubini's theorem, we have
\[{}
\begin{split}
K \ast \nu(x)-K \ast \nu(y){}
& = \int_0^1\nabla (K\ast\nu)(\xi_t) \cdot (x-y)\dif t\\
& = \int_0^1{\int_Q \nabla K(\xi_{t} - z) \cdot (x-y) \dif\nu(z)}\dif t,
\end{split}
\]
where \(\xi_{t} \vcentcolon= tx+(1-t)y\) belongs to the line segment \([x, y]\) for every \(t \in [0, 1]\).{}
We do not explicit its dependence on \(x\) and \(y\), but one should keep in mind that \emph{\(\xi_{t}\) is independent of \(z\)}.
Since \(\nu(Q) = 0\), we can thus write
\begin{equation}
\label{eqBadMeanValue}
K \ast \nu(x)-K \ast \nu(y){} 
=\int_0^1{ \int_Q{ \bigl[\nabla K(\xi_{t} - z) - \nabla K(\xi_{t} - \bar{z})\bigr] \cdot (x-y) \dif \nu(z)} \dif t}.
\end{equation}

\begin{Claim}
	Let \(0 < \beta < \frac{\theta -1}{\theta}\) and  \(x \in \R^N\setminus \theta Q\).{}
	For every \(\xi \in \R^N\) such that \(\abs{x - \xi}_{\infty} \le \beta \abs{x - \bar{z}}_{\infty}\) and every \(z \in Q\), we have
	\[{}
	\bigabs{\nabla K(\xi - z) - \nabla K(\xi - \bar{z})}
	\le \Cl{cte-860} \frac{\abs{z - \bar{z}}}{\abs{x - \bar{z}}^{N + 1}},
	\]
	where \(\Cr{cte-860} > 0\) depends on \(\beta\), \(\theta\), \(A\), \(B\) and \(N\).
\end{Claim}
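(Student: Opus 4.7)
The plan is to mimic the Claim inside the proof of \Cref{lemmaDecayEstimateFar}, but now one order of differentiation higher. There, the Mean Value theorem applied to $K$ along the segment $[\bar{z}, z]$ together with the first-order bound \eqref{eqKernel1} on $\nabla K$ gave the decay $|z - \bar{z}|/|x - \bar{z}|^{N}$. Here, applying the Mean Value theorem to $\nabla K$ along the same segment, we obtain a point $\zeta \in [\bar{z}, z] \subset Q$ such that
\[
\nabla K(\xi - z) - \nabla K(\xi - \bar{z}) = -D^{2}K(\xi - \zeta)[z - \bar{z}],
\]
and the second-order estimate in \eqref{eqKernel} yields
\[
\bigabs{\nabla K(\xi - z) - \nabla K(\xi - \bar{z})} \le \frac{B \, \abs{z - \bar{z}}}{\abs{\xi - \zeta}^{N+1}}.
\]
All that remains is to bound $\abs{\xi - \zeta}$ from below by a constant multiple of $\abs{x - \bar{z}}$.

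This is the only geometric point. Working with the max-norm $\abs{\cdot}_{\infty}$, we have $\abs{\zeta - \bar{z}}_{\infty} \le \ell/2$ since $\zeta \in Q$, and $\abs{x - \bar{z}}_{\infty} \ge \theta \ell / 2$ since $x \notin \theta Q$, so that $\abs{\zeta - \bar{z}}_{\infty} \le \frac{1}{\theta} \abs{x - \bar{z}}_{\infty}$. Combining this with the assumption $\abs{x - \xi}_{\infty} \le \beta \abs{x - \bar{z}}_{\infty}$ and the triangle inequality,
\[
\abs{x - \bar{z}}_{\infty} \le \abs{x - \xi}_{\infty} + \abs{\xi - \zeta}_{\infty} + \abs{\zeta - \bar{z}}_{\infty} \le \Bigl(\beta + \tfrac{1}{\theta}\Bigr)\abs{x - \bar{z}}_{\infty} + \abs{\xi - \zeta}_{\infty}.
\]
Since $\beta < \frac{\theta - 1}{\theta}$, the coefficient $1 - \beta - \frac{1}{\theta}$ is strictly positive, so $\abs{\xi - \zeta}_{\infty} \ge (1 - \beta - 1/\theta) \abs{x - \bar{z}}_{\infty}$, and then by equivalence of norms $\abs{\xi - \zeta} \ge c\abs{x - \bar{z}}$ for a constant $c > 0$ depending only on $\beta$, $\theta$ and $N$. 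Inserting this into the $D^{2}K$~estimate above yields the Claim with $C'' = B/c^{N+1}$.

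No genuine obstacle is expected: the proof is a one-step Mean Value argument followed by a triangle-inequality lower bound, in direct analogy with the Claim of \Cref{lemmaDecayEstimateFar}. The only subtlety is choosing $\beta$ strictly below $\frac{\theta - 1}{\theta}$, which is precisely the hypothesis; this buffer is what later allows the lemma itself to select $\epsilon > 0$ small enough so that any $\xi_{t}$ on the segment $[x, y]$ with $\abs{x - y} \le \epsilon \abs{x - \bar{z}}$ satisfies the hypothesis $\abs{x - \xi_{t}}_{\infty} \le \beta \abs{x - \bar{z}}_{\infty}$, via the norm-equivalence constant between $\abs{\cdot}$ and $\abs{\cdot}_{\infty}$.
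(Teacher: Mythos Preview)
Your proof is correct and essentially identical to the paper's: Mean Value theorem applied to $\nabla K$ along $[\bar z, z]$ together with the second bound in \eqref{eqKernel}, followed by the same triangle-inequality computation in the max-norm yielding $\abs{\xi-\zeta}_\infty \ge (1-\beta-1/\theta)\,\abs{x-\bar z}_\infty$ and then norm equivalence. One minor caveat: the vector-valued Mean Value theorem does not in general give the exact \emph{equality} $\nabla K(\xi-z)-\nabla K(\xi-\bar z) = -D^2K(\xi-\zeta)[z-\bar z]$ at a single $\zeta$, but the inequality you derive from it (which is all that is needed, and is what the paper writes) is correct.
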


\begin{proof}[Proof of the Claim]
	Applying the Mean value theorem, we deduce from the second estimate in \eqref{eqKernel} that there exists \(\zeta \in [z, \bar{z}]\) such that
	\begin{equation}
	\label{claim-Close}
	\bigabs{\nabla K(\xi - z) - \nabla K(\xi - \bar{z})}
	\le \C \frac{\abs{z - \bar{z}}}{\abs{\xi - \zeta}^{N + 1}}.
	\end{equation}
	We now show that for any \(\beta\) as above, one has
	\begin{equation}
	\label{claim-531}
	\abs{x - \bar{z}}_{\infty} 
	\le  \Cl{cte-496} \abs{\xi - \zeta}_{\infty}.
	\end{equation}
	To this end,
	observe that for \(\zeta \in Q\),{}
	\[{}
	\abs{\zeta - \bar{z}}_{\infty}
	\le \frac{\ell}{2}
	\le \frac{1}{\theta} \abs{x - \bar{z}}_{\infty}.
	\]
	By the triangle inequality and the assumption on \(\xi\), we thus have
	\[
	\abs{x - \bar{z}}_{\infty}
	\le \abs{x - \xi}_{\infty}
	+ \abs{\xi - \zeta}_{\infty}
	+ \abs{\zeta - \bar{z}}_{\infty}
	\le \Bigl(\beta + \frac{1}{\theta}\Bigr) \abs{x - \bar{z}}_{\infty} + \abs{\xi - \zeta}_{\infty}
	\]
	and we conclude that
	\[
	\Bigl( 1 - \beta - \frac{1}{\theta} \Bigr) \abs{x - \bar{z}}_{\infty}
	\le  \abs{\xi - \zeta}_{\infty}.
	\]
	Since the quantity in parenthesis is positive by the choice of \(\beta\), this inequality is equivalent to \eqref{claim-531}.
	The claim thus follows from \eqref{claim-Close} and the counterpart of \eqref{claim-531} for the Euclidean norm.
\end{proof}

To conclude the proof of the lemma, observe that if \(\abs{x - y} \le \epsilon \abs{x - z}\) for some  \(\epsilon > 0\), then 
\(\abs{x - y}_{\infty} 
\le \epsilon \sqrt{N} \abs{x - z}_{\infty}\).
Thus, taking \(\epsilon \sqrt{N} = \frac{\theta - 1}{2 \theta}\), every \(\xi \in [x, y]\) satisfies the assumption of the Claim.
By \eqref{eqBadMeanValue} and the Claim, we thus get
\[{}
\begin{split}
|K\ast \nu(x)-K \ast \nu(y)|{}
& \leq \frac{\Cl{cte-907} \ell}{\abs{x - \bar{z}}^{N+1}} \int_{Q}{\dif |\nu|(z)} \, |x-y|\\
& =  \frac{\Cr{cte-907} \ell}{\abs{x - \bar{z}}^{N+1}} \norm{\nu}_{\cM(\R^{N})} \, |x-y|,
\end{split}
\]
which gives the main estimate of the lemma.
From there, one estimates \(\nabla{(K * \nu)}(x)\) by letting \(y \to x\) in the direction of the gradient.
\end{proof}

\resetconstant
\begin{proof}[Proof of \Cref{propositionDifficultCase}]
	Let \(\epsilon > 0\) be as in \Cref{lemmaDecayEstimateClose}.
	Assuming that \(x, y \in \R^{N} \setminus \theta Q\) satisfy
	\[{}
	\abs{x - y}
	\le \epsilon \max{\{\abs{x - \bar{z}}, \abs{y - \bar{z}}\}},
	\]
	then after relabeling \(x\) and \(y\) if necessary we have
	\(
	\abs{x - y}
	\le \epsilon \abs{x - \bar{z}}
	\).
	By \Cref{lemmaDecayEstimateClose}, we thus have
	\[
	\abs{K\ast \nu (x) - K\ast \nu (y)}
	\leq C''' \ell \biggl( \frac{1}{|x - \bar{z}|^{N+1}} + \frac{1}{|y - \bar{z}|^{N+1}} \biggr) \|\nu\|_{\M(\R^N)} \, \abs{x - y}.
	\]
	
	We now assume instead that \(x, y \in \R^{N} \setminus \theta Q\) satisfy
	\[{}
	\epsilon \max{\{\abs{x - \bar{z}}, \abs{y - \bar{z}}\}} 
	< \abs{x - y}.
	\]
	The triangle inequality and \Cref{lemmaDecayEstimateFar} imply that
	\[{}
	\abs{K\ast \nu (x) - K\ast \nu (y)}
	\le C''\ell \biggl( \frac{1}{|x-\bar{z}|^{N}} + \frac{1}{|y-\bar{z}|^{N}} \biggr) \|\nu\|_{\M(\R^N)}.
	\]
	In view of the assumption on \(x\) and \(y\) in this case,
	\[{}
	\abs{K\ast \nu (x) - K\ast \nu (y)}
	\le C'' \ell \biggl( \frac{1}{|x-\bar{z}|^{N+1}} + \frac{1}{|y-\bar{z}|^{N + 1}} \biggr) \|\nu\|_{\M(\R^N)} \, \frac{\abs{x - y}}{\epsilon}.
	\]
	We thus have the conclusion with \(I : \R^{N} \setminus \theta Q \to {}[0, \infty)\) defined by
	\begin{equation}
	\label{eqChoiceI}
	I(y) = \frac{C' \ell}{|y - \bar{z}|^{N+1}} \|\nu\|_{\M(\R^N)},
	\end{equation}
	where \(C' \vcentcolon= \max{\{C''', C''/\epsilon\}}\).
\end{proof}

\section{A uniformization principle}\label{4'}

We now establish \Cref{theoremADSingularIntegralAlternative}, whose main ingredient is already contained in the proof of \Cref{theoremADSingularIntegral}.
There we show that, for every \(t > 0\), there exists a measurable function \(I = I_{t} : \R^{N} \to [0, \infty]\) which satisfies  \eqref{eqSingularIntegralLipschitz} and \eqref{eqSingularIntegralMeasure}.
	The distribution function of \(I_{t}\) only verifies the estimate we seek at height \(t\).
	The next lemma is a uniformization property that encodes this family of functions \((I_{t})_{t > 0}\) into a single weak-\(L^{1}\) function and immediately implies \Cref{theoremADSingularIntegralAlternative} by choosing 
	\[{}
	E = \Dom{(K * \mu)}
	\quad \text{and}\quad{}
	A' = \Cr{cte-647} \norm{\mu}_{\cM(\R^{N})},
	\]
	where \(\Cr{cte-647}\) is the constant in \eqref{eqSingularIntegralMeasure}.

\begin{lemma}
	Let \(E \subset \R^{N}\) and \(v : E \to \R\) be such that, for every \(t > 0\), there exists a measurable function \(I_{t} : \R^{N} \to [0, \infty]\) with
	\[{}
	\abs{v(x) - v(y)}
	\le (I_{t}(x) + I_{t}(y)) \abs{x - y}
	\quad \text{for every \(x, y \in E\)}
	\]
	and 
	\[{}
	\abs{\{I_{t} > t\}}
	\le \frac{A'}{t},
	\] 
	for some constant \(A' > 0\) independent of \(t\).{}
	Then, there exists a measurable function \(H : \R^{N} \to [0, \infty]\) with
	\begin{equation}\label{good_estimate}
	\abs{v(x) - v(y)}
	\le (H(x) + H(y)) \abs{x - y}
	\quad \text{for every \(x, y \in E\)}
	\end{equation}
	and  
	\[
	\seminorm{H}_{L^{1}\lw(\R^{N})}
	\le 8A'.
	\]
\end{lemma}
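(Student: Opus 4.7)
The main difficulty is that no monotonicity in $t$ of the family $(I_t)_{t>0}$ is assumed, so one cannot simply take a pointwise supremum or infimum. The idea is instead to discretize $t$ along a dyadic scale and use a Borel--Cantelli--type argument to build nested \emph{good sets} on which infinitely many of the $I_{t_k}$ are already controlled. More precisely, I would set $t_k \vcentcolon= 2^k$ for $k \in \Z$ and introduce
\[
G_k \vcentcolon= \bigcap_{j \ge k}{\{ I_{t_j} \le t_j \}}.
\]
Each $G_k$ is measurable, and the family $(G_k)_{k \in \Z}$ is increasing in $k$. A geometric-series estimate combined with the hypothesis $\abs{\{I_{t_j} > t_j\}} \le A'/t_j$ gives
\[
\abs{\R^{N} \setminus G_k}
\le \sum_{j \ge k}{\frac{A'}{t_j}}
= \frac{2 A'}{t_k}.
\]
In particular, $\R^{N} \setminus \bigcup_{k \in \Z} G_k$ is Lebesgue-negligible.

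I would then define $H : \R^{N} \to [0,\infty]$ by
\[
H(x) \vcentcolon= 2 \inf{\bigl\{\, t_k : x \in G_k \,\bigr\}},
\]
with the convention $\inf \emptyset = \infty$; measurability is clear since $H$ is an infimum of a countable family of measurable functions $2 t_k \chi_{G_k} + (+\infty) \chi_{\R^{N} \setminus G_k}$. To verify the Lipschitz-type estimate \eqref{good_estimate}, pick $x, y \in E$ with $H(x), H(y) < \infty$ and let $k_{1}, k_{2} \in \Z$ realize the infima for $x$ and $y$, respectively. Since $(G_k)_{k}$ is increasing, with $k^{*} \vcentcolon= \max{\{k_{1}, k_{2}\}}$ one has $x, y \in G_{k^{*}}$, hence $I_{t_{k^{*}}}(x) \le t_{k^{*}}$ and $I_{t_{k^{*}}}(y) \le t_{k^{*}}$. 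Applying the hypothesis at $t = t_{k^{*}}$ gives
\[
\abs{v(x) - v(y)}
\le (I_{t_{k^{*}}}(x) + I_{t_{k^{*}}}(y)) \abs{x - y}
\le 2 t_{k^{*}} \abs{x - y}
\le (H(x) + H(y)) \abs{x - y},
\]
where the last inequality uses $2 t_{k^{*}} = \max{\{H(x), H(y)\}}$. The cases where $H(x)$ or $H(y)$ is infinite are trivial.

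For the weak-$L^{1}$ bound, observe that $H(x) > t$ forces $x \notin G_k$ for every $k$ with $2 t_k \le t$. Taking the largest such dyadic index $k_0$, so that $t_{k_0} \in (t/4, t/2]$, one gets $\{H > t\} \subset \R^{N} \setminus G_{k_0}$ and therefore
\[
\abs{\{H > t\}}
\le \frac{2 A'}{t_{k_0}}
< \frac{8 A'}{t},
\]
which yields $\seminorm{H}_{L^{1}\lw(\R^{N})} \le 8 A'$. The point where some care is needed is ensuring that $H$ is well-defined and finite almost everywhere: this is exactly where the geometric decay of $\abs{\R^{N} \setminus G_k}$ enters, guaranteeing that $\{H = \infty\} \subset \bigcap_{k} (\R^{N} \setminus G_k)$ has measure zero.
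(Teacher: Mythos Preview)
Your proof is correct and in substance identical to the paper's: writing $S(x)=\{n\in\Z:I_{2^n}(x)>2^n\}$, your function $H(x)=2\inf\{2^k:x\in G_k\}$ and the paper's $H(x)=\sup_{n}2^{n+2}\chi_{\{I_{2^n}>2^n\}}(x)$ both equal $2^{n_0+2}$ with $n_0=\max S(x)$ (and $0$, resp.\ $\infty$, when $S(x)$ is empty or unbounded above), and the weak-$L^1$ bound is the same geometric-series computation. Your nested ``good sets'' $G_k$ are simply the dual viewpoint to the paper's supremum over ``bad'' levels; one cosmetic slip is that when $H(x)=0$ the infimum is not realized by any $k_1$, but then $x\in G_k$ for every $k$ and the argument goes through by choosing $k^*$ to be the index for $y$ (or any $k\to-\infty$ if also $H(y)=0$, forcing $v(x)=v(y)$).
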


\begin{proof}
	Given \(x, y \in E\) such that \(v(x) \ne v(y)\), take \(n \in \Z\) such that
	\[{}
	2^{n+1} \abs{x - y}
	< \abs{v(x) - v(y)}
	\le 2^{n+2} \abs{x - y}.
	\]
	By the first inequality, for each \(t > 0\) we have that \(x\) or \(y\) belongs to \(\{I_{t} > 2^{n}\}\).{}
	By the second inequality, we thus have
	\begin{equation}
		\label{eqUniformatizationLevelN}
	\abs{v(x) - v(y)}
	\le 2^{n+2} \bigl(\chi_{\{I_{t} > 2^{n}\}}(x) + \chi_{\{I_{t} > 2^{n}\}}(y) \bigr) \abs{x - y}.
	\end{equation}

	Define \(H : \R^{N} \to [0, \infty]\) by
	\[{}
	H
	= \sup{\bigl\{2^{n+2} \, \chi_{\{I_{2^{n}} > 2^{n}\}} : n \in \Z\bigr\}}.
	\]
	Then, \eqref{good_estimate} holds.		
	To prove that \(H\)  satisfies the desired weak-\(L^{1}\) estimate, we first observe that, from the definition of \(H\),
	\begin{equation}
		\label{eqUniformizationInclusion}
		\{H > 2^{k}\} \subset \bigcup_{l = k-1}^{\infty}{\{I_{2^{l}} > 2^{l}\}}
		\quad \text{for every \(k \in \Z\).}
	\end{equation}
	Given \(t > 0\), let \(k \in \Z\) be such that 
	\(
	2^{k} \le t < 2^{k+1}
	\).
	By monotonicity and subadditivity of the Lebesgue measure, it follows from \eqref{eqUniformizationInclusion} that
	\[{}
	\abs{\{H > t\}}
	\le \abs{\{H > 2^{k}\}}
	\le \sum_{l = k-1}^{\infty}{\abs{\{I_{2^{l}} > 2^{l}\}}}.
	\]
	From the assumption on the measure of the superlevel sets of \(I_{t}\),
	\[{}
	\abs{\{H > t\}}
	\le \sum_{l = k-1}^{\infty}{\frac{A'}{2^{l}}}
	= \frac{A'}{2^{k-2}}
	\le \frac{8A'}{t}.
	\]
	Since \(t > 0\) is arbitrary, we thus have \(\seminorm{H}_{L^{1}\lw(\R^{N})}\le 8A' \).
\end{proof}

The classical \(L^{p}\) singular-integral estimates can be also formulated using a Lipschitz-type formalism, whose proof relies on a standard interpolation argument that we sketch for the convenience of the reader:

\begin{proposition}
	\label{remarkLp}
	Let \(K \in C^{2}(\R^{N} \setminus \{0\})\) be any function that satisfies \eqref{eqKernel} and let \(1 < p < \infty\).
	If \(\mu \in (L^{1} \cap L^{p})(\R^{N})\), 
	then \(K * \mu\) has a distributional gradient \(\nabla(K * \mu)\) in \(L^{p}(\R^{N}; \R^{N})\) and there exists a measurable function \(I : \R^{N} \to [0, \infty]\) in \(L^{p}(\R^{N})\) such that
	\begin{equation}
	\label{eqEstimateLipschitz}
	\abs{K * \mu(x) - K * \mu(y)}
	\le (I(x) + I(y)) \abs{x - y}
	\quad \text{for every \(x, y \in \Dom{(K * \mu)}\)}
	\end{equation}	
	and
	\begin{equation*}
	\norm{I}_{L^{p}(\R^{N})}
	\le C \norm{\mu}_{L^{p}(\R^{N})},
	\end{equation*}
	where the constant $C > 0$ depends on \(A\), \(B\), $p$ and $N$, 
\end{proposition}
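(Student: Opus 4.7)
The plan is to pair a classical $L^p$ bound on $\nabla(K*\mu)$ with the pointwise Sobolev/maximal-function estimate of Proposition \ref{MaximalandSobolev}, choosing $I$ to be a multiple of $\cM|\nabla(K*\mu)|$.

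First I would establish that $\|\nabla(K*\mu)\|_{L^p(\R^N)} \le C \|\mu\|_{L^p(\R^N)}$ for $1 < p < \infty$. The operator $T\mu := \nabla(K*\mu) = (\nabla K) * \mu$ has kernel $\nabla K$, which satisfies the pointwise bound $|\nabla K(x)| \le C/|x|^N$ from the interpolation estimate \eqref{eqKernel1} and a Hörmander-type smoothness condition inherited directly from $|D^2 K(x)| \le B/|x|^{N+1}$. Since $T$ is bounded on $L^2$ by Proposition \ref{L2caseNew}, classical Calderón-Zygmund theory yields the $L^p$ bound for $1 < p \le 2$. Alternatively, one may rerun the Calderón-Zygmund decomposition argument of Theorem \ref{theoremADSingularIntegral} with $\apD(K*\mu)$ replaced by $\nabla(K*\mu)$: when $\mu \in L^1 \cap L^2$ is a function, the good part $g$ gives $K*g \in W^{1,2}_{\mathrm{loc}}(\R^N)$, so $\apD(K*g) = \nabla(K*g)$ almost everywhere and the weak-$L^1$ estimate transfers verbatim; Marcinkiewicz interpolation then covers $1 < p \le 2$. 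For $2 < p < \infty$ one uses duality, noting that the adjoint of $T$ is a convolution operator of the same form with kernel built from $\tilde K(x) := K(-x)$, which itself satisfies \eqref{eqKernel}.

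Second, since $\mu \in L^1(\R^N)$ we have $K*\mu \in L^1_{\mathrm{loc}}(\R^N)$, and together with $\nabla(K*\mu) \in L^p(\R^N;\R^N)$ from the previous step this gives $K*\mu \in W^{1,p}_{\mathrm{loc}}(\R^N) \subset W^{1,1}_{\mathrm{loc}}(\R^N)$. I would then define
\begin{equation*}
I(x) := 2^N \cM|\nabla(K*\mu)|(x) \quad \text{if finite, and } I(x):=\infty \text{ otherwise,}
\end{equation*}
the exceptional set being negligible because $\nabla(K*\mu) \in L^p$ with $p>1$. By Proposition \ref{MaximalandSobolev} applied to the precise representative of $K*\mu$, the inequality \eqref{eqEstimateLipschitz} holds for every $x,y$ lying in the full-measure subset of $\Dom(K*\mu)$ consisting of Lebesgue points at which $I$ is finite. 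The $L^p$-boundedness of the Hardy-Littlewood maximal operator (valid precisely because $p>1$) combined with Step 1 yields
\begin{equation*}
\|I\|_{L^p(\R^N)} \le 2^N C_p\, \|\nabla(K*\mu)\|_{L^p(\R^N)} \le C\, \|\mu\|_{L^p(\R^N)}.
\end{equation*}

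The main obstacle is Step 1, the $L^p$-boundedness of the singular integral $\nabla(K*\mu)$; once it is in hand, Step 2 is essentially bookkeeping built on the already-proved Proposition \ref{MaximalandSobolev}. A secondary technicality in Step 2 is reconciling the domain $\Dom(K*\mu)$ with the Lebesgue-point set of the precise representative, but declaring $I=\infty$ on the negligible exceptional set resolves this without affecting $\|I\|_{L^p}$.
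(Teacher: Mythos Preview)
Your approach is correct and closely mirrors the paper's proof: Step~1 (the $L^p$ bound for $\nabla(K*\mu)$ via the $L^2$ estimate, the weak-$L^1$ estimate, Marcinkiewicz interpolation for $1<p\le 2$, and duality for $p>2$) is exactly what the paper does, and Step~2 (invoking \Cref{MaximalandSobolev} and the $L^p$ maximal inequality) is the same idea.

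The one genuine difference is in the choice of $I$. You take $I = 2^N \cM|\nabla(K*\mu)|$ and then patch the null-set discrepancy between $\Dom(K*\mu)$ and the Lebesgue-point set of the precise representative by redefining $I=\infty$ there. This works, but note that \Cref{MaximalandSobolev} gives an inequality for $\widetilde{K*\mu}$, not for the pointwise convolution value $K*\mu(x)$, so your ``exceptional set'' must also include the (negligible) set of Lebesgue points where these two disagree. The paper instead takes
\[
I = 2^N \cM|\nabla(K*\mu)| + R*|\mu|, \qquad R(\xi)=\chi_{B_1}(\xi)/|\xi|^{N-1},
\]
and shows directly that $R*|\mu|(x)<\infty$ forces $x$ to be a Lebesgue point with $K*\mu(x)$ equal to the precise representative. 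This avoids any ad hoc redefinition on null sets and, as the paper notes, yields an $I$ that is sublinear in $\mu$; your choice is simpler but loses that explicit structure.
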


\resetconstant
\begin{proof}
	A combination of \Cref{L2caseNew}, \Cref{theoremADSingularIntegral} and the Marcinkiewicz interpolation theorem implies that
the linear functional
\begin{equation}
	\label{eqFunctional}
\mu \in C_{c}^{\infty}(\R^{N})
\longmapsto \nabla(K * \mu) \in L^{p}(\R^{N})
\end{equation}
is continuous with respect to the strong \(L^{p}\)~topology on both sides for \(1 < p < 2\)
and then, by duality, the same conclusion holds for \(2 < p < \infty\).{}
By unique continuous extension of \eqref{eqFunctional} one deduces that the distributional derivative \(\nabla(K * \mu)\) belongs to \(L^{p}(\R^{N})\) for every \(\mu \in (L^{1} \cap L^{p})(\R^{N})\).{}

We claim that \eqref{eqEstimateLipschitz} holds with 
\begin{equation}
	\label{eqChoiceILp}
I \vcentcolon= 2^{N} \cM\abs{\nabla (K * \mu)} + R * \abs{\mu},
\end{equation}
where \(R(\xi) \vcentcolon= \chi_{B_{1}}(\xi)/\abs{\xi}^{N-1}\).{}
Observe that such an explicit choice of coefficient \(I\)\/ behaves sublinearly with respect to \(\mu\).

In view of \Cref{MaximalandSobolev}, to check the claim it suffices to verify that every \(x \in \R^{N}\) with \(I(x) < \infty\) is a Lebesgue point of \(K * \mu\) and \(K * \mu(x)\) is the precise representative.
That \(x\) is a Lebesgue point of \(K * \mu\) is a consequence of \(\cM\abs{\nabla (K * \mu)}(x) < \infty\), but without identification of the precise representative.{}
The full property can be obtained instead using \(R * \abs{\mu}(x) < \infty\), as it implies that
\begin{equation}
	\label{eqPreciseRepresentativeQ}
\lim_{r \to 0}{\fint_{B_{r}(x)}{\abs{K * \mu - K * \mu(x)}}} = 0.
\end{equation}
Indeed, by Fubini's theorem, for every \(r > 0\) we have
\[{}
\fint_{B_{r}(x)}{\abs{K * \mu - K * \mu(x)}}
\le \int_{\R^{N}}{\biggl( \fint_{B_{r}(x)}{\abs{K(y - z)- K(x - z)}\dif y} \biggr) \dif\abs{\mu}(z)}.
\]
By continuity of \(K\) in \(\R^{N} \setminus \{0\}\),{}
\[{}
\lim_{r \to 0}{\fint_{B_{r}(x)}{\abs{K(y - z)- K(x - z)}\dif y}} = 0
\quad \text{for every \(z \in \R^{N} \setminus \{x\}\).}
\]
Since \(\abs{K(\xi)} \le A/\abs{\xi}^{N - 1}\), we also have
\[{}
\begin{split}
\fint_{B_{r}(x)}{\abs{K(y - z)- K(x - z)} \dif y}
& \le \fint_{B_{r}(x)}{\abs{K(y - z)}} \dif y + \abs{K(x - z)}\\
& \le \fint_{B_{r}(x)}{\frac{A}{\abs{y - z}^{N - 1}}} \dif y + \frac{A}{\abs{x - z}^{N - 1}}\\
&\le \frac{\Cl{cte-1485}}{\abs{x - z}^{N - 1}}
 \le \Cr{cte-1485} \bigl(R(x - z) + 1\bigr),
\end{split}
\]
for every \(x, z \in \R^{N}\) and \(r > 0\), where the constant \(\Cr{cte-1485} > 0\) depends on \(A\) and \(N\).{}
As \(R * \abs{\mu}(x) < \infty\) and \(\mu \in L^{1}(\R^{N})\), we can apply the Dominated convergence theorem to deduce \eqref{eqPreciseRepresentativeQ}.
Hence, \(K*\mu(x)\) is the precise representative of \(K * \mu\) at \(x\) and then \eqref{eqEstimateLipschitz} is satisfied thanks to \Cref{MaximalandSobolev}.

To verify the \(L^{p}\)~estimate of \(I\), we apply the maximal inequality in \(L^{p}(\R^{N})\) and the interpolation argument in the beginning of the proof to get
\[{}
\bignorm{\cM\abs{\nabla (K * \mu)}}_{L^{p}(\R^{N})}
\le \C \norm{\nabla(K * \mu)}_{L^{p}(\R^{N})}
\le \C \norm{\mu}_{L^{p}(\R^{N})}.
\]
The estimate for \(I\) thus follows since \(R \in L^{1}(\R^{N})\) and then, by Young's inequality,
\[{}
\norm{R * \abs{\mu}}_{L^{p}(\R^{N})}
\le \norm{R}_{L^{1}(\R^{N})}  \norm{\mu}_{L^{p}(\R^{N})}.
\qedhere
\]
\end{proof}

\section{Proof of \Cref{theoremDifferentiability}}
\label{sectionDifferentiability}

\resetconstant
When \(p > 1\), it is convenient to equip the space \(L^{p}\lw(\R^{N})\) of weak-\(L^{p}\) functions with the norm
\[{}
\norm{f}_{L^{p}\lw(\R^{N})}
\vcentcolon= \sup{\biggl\{ \frac{1}{\abs{A}^{\frac{p-1}{p}}} \int_{A}{\abs{f}} : A \subset \R^{N}\ \text{has finite measure} \biggr\}},
\]
which is equivalent to the quasinorm \(\seminorm{f}_{L^{p}\lw(\R^{N})}\); see e.g.~the proof of Proposition~5.6 in \cite{Ponce2016}.
A straightforward application of Fubini's theorem gives the following counterpart of Young's inequality in weak-\(L^{p}\) spaces for \(p > 1\):
\begin{equation}
\label{eqYoung}
\norm{K * \mu}_{L^{p}\lw(\R^{N})}
\le \norm{K}_{L^{p}\lw(\R^{N})} \norm{\mu}_{\cM(\R^{N})},
\end{equation}
where \(\mu\) is any finite Borel measure in \(\R^{N}\); see \cite{BenilanBrezisCrandall}*{Lemma~A.4}.

\begin{proof}[Proof of \Cref{theoremDifferentiability}]
Given \(t > 0\), we rely on the Calderón-Zygmund decomposition~\eqref{eqDecompositionCZThm}, namely
\begin{equation*}
\mu{}
= g \dif x + \sum_{n = 0}^{\infty}{b_{n}},
\end{equation*}
where \(g \in (L^{1} \cap L^{\infty})(\R^{N})\) and each measure \(b_{n}\) satisfies \(b_{n}(Q_{n}) = 0\), \(\abs{b_{n}}(\R^{N} \setminus Q_{n}) = 0\) and
\begin{equation}
	\label{eqEstimateBadCubes}
	\norm{b_{n}}_{\cM(\R^{N})}
	\le 2 \abs{\mu}(Q_{n}).
\end{equation}
The half-closed cubes \(Q_{n}\) are disjoint and given by the Whitney covering of the open set 
\[{}
\R^{N} \setminus F
= \{\cM\mu > t\}.
\] 
Moreover,
\[{}
	K*\mu(x){}
	= K*g(x) + \sum_{n = 0}^{\infty}{K*b_{n}(x)}
	\quad \text{for every \(x \in \R^{N} \setminus S\),}
\]
where \(S \subset \R^{N}\) is the Lebesgue-negligible set given by \eqref{eqDecompositionSetS}.
In particular, this identity holds almost everywhere in \(\R^{N}\).

At a point \(y \in \R^{N} \setminus S\) where \(K * g\) and all \(K * b_{n}\) are approximately differentiable, we have
\begin{equation}
\label{eqTaylorSeries}
\abs{K * \mu - P_{y}}
\le \abs{K * g - T_{y}^{1}(K * g)}
+ \sum_{n = 0}^{\infty}{\abs{K * b_{n} - T_{y}^{1}(K * b_{n})}}
\end{equation}
almost everywhere in \(\R^{N}\), where \(P_{y}\) is the affine function
\[{}
P_{y} \vcentcolon= T_{y}^{1}(K * g) + \sum_{n = 0}^{\infty}{T_{y}^{1}(K * b_{n})},
\]
provided that the series in the right-hand side converges.

From \Cref{remarkLp}, since \(g \in (L^{1} \cap L^{\infty})(\R^{N})\) the function \(K * g\) has a distributional gradient in \(L^{p}(\R^{N}; \R^{N})\) and then is \(L^{p}\)-differentiable almost everywhere in \(\R^{N}\) for every \(1 < p < \infty\).{}
In particular, for \(p = \frac{N}{N-1}\),
\begin{equation}
\label{eqDiffLpGood}
\lim_{r \to 0}{\frac{\norm{K * g - T_{y}^{1}(K * g)}_{L^{\frac{N}{N-1}}(B_{r}(y))}}{r^{N}} } = 0
\quad \text{for almost every \(y \in \R^{N}\).}
\end{equation}
Observe that, by smoothness of the functions \(K * b_{n}\) in a neighborhood of \(y \in F\), each term in the series in the right-hand side of \eqref{eqTaylorSeries} evaluated at a point \(x\) behaves like \(o(\abs{x - y})\) as \(x \to y\).{}
Thus, for every \(n \in \N\) we also have
\begin{equation}
\label{eqDiffLpBad}
\lim_{r \to 0}{\frac{\norm{K * b_{n} - T_{y}^{1}(K * b_{n})}_{L^{\frac{N}{N-1}}(B_{r}(y))}}{r^{N}} } = 0
\quad \text{for every \(y \in F\).}
\end{equation}

To handle the fact that we are dealing with infinitely many terms in \eqref{eqTaylorSeries}, we need a uniform estimate of the tail of the series.
To this end, we take \(y \in F \).{}
For every \(n \in \N\), we have \(y \in \R^{N} \setminus 2Q_{n}\) and then, by \Cref{lemmaDecayEstimateClose} applied at \(y\),
\[
\abs{\nabla(K * b_{n})(y)}
\le \frac{C''' \ell_{n}}{\abs{y - \bar{z}_{n}}^{N + 1}} \norm{b_{n}}_{\cM(\R^{N})},
\]
where \(\ell_{n}\) is the side-length and \(\bar{z}_{n}\) is the center of \(Q_{n}\).
By \eqref{eqComparisonMarcinkiewicz} in \Cref{remarkMarcinkiewicz}, we also have
\begin{equation}
	\label{eqMarcinkiewicz}
	\frac{\ell_{n}}{\abs{y - \bar{z}_{n}}^{N + 1}} \norm{b_{n}}_{\cM(\R^{N})}
	\le \C \int_{Q_{n}}{\frac{d(z, F)}{\abs{y - z}^{N + 1}} \dif\abs{\mu}(z)}.
\end{equation}
Hence,
\begin{equation}
\label{eqGradientAnyCube}
\abs{\nabla(K * b_{n})(y)}
\le \C \int_{Q_{n}}{\frac{d(z, F)}{\abs{y - z}^{N + 1}} \dif\abs{\mu}(z)}.
\end{equation}

Next, let \(J \in \N\) and \(r > 0\), and let \(\epsilon > 0\) be given by \Cref{lemmaDecayEstimateClose}.{}
Observe that \(\epsilon\) only depends on the dimension \(N\).
We divide the cubes \(Q_{n}\) with indices \(n \ge J\) in two disjoint classes, according to their distances from the point \(y\) as follows:
 \(\mathcal{F}_{J, r}\) is the subset of indices \(n \ge J\) such that \(\epsilon \abs{y - \bar{z}_{n}} > r\) and \(\mathcal{C}_{J, r}\) is the subset of indices \(n \ge J\) such that \(\epsilon \abs{y - \bar{z}_{n}} \le r\).{}
 
The class  \(\mathcal{F}_{J, r}\) keeps track of the cubes which are \emph{far from \(y\)}.{}
We claim that, for every \(n \in \mathcal{F}_{J, r}\)\,,
\begin{equation}
	\label{eqCubesFar}
	\norm{K * b_{n} - T_{y}^{1}(K * b_{n})}_{L^{\infty}(B_{r}(y))}
	\le \C \, r \int_{Q_{n}}{\frac{d(z, F)}{\abs{y - z}^{N + 1}} \dif\abs{\mu}(z)}.
\end{equation}
Indeed, for every \(x \in B_{r}(y)\) and \(n \in \mathcal{F}_{J, r}\)\,,
\[{}
\abs{x - y} < r < \epsilon \abs{y - \bar{z}_{n}}.
\]
By \Cref{lemmaDecayEstimateClose} (reversing the roles of \(x\) and \(y\)) and \eqref{eqMarcinkiewicz}, we then have
\begin{equation}
\label{eqOrderZeroAny}
\begin{split}
\abs{K * b_{n}(x) - K * b_{n}(y)}
& \le \frac{C''' \ell_{n}}{\abs{y - \bar{z}_{n}}^{N + 1}} \norm{b_{n}}_{\cM(\R^{N})} \abs{x - y}\\
& \le \C \, r \int_{Q_{n}}{\frac{d(z, F)}{\abs{y - z}^{N + 1}} \dif\abs{\mu}(z)}.
\end{split}
\end{equation}
Combining \eqref{eqOrderZeroAny} and \eqref{eqGradientAnyCube}, we deduce \eqref{eqCubesFar}.
The latter implies that, for every \(n \in \mathcal{F}_{J, r}\)\,,
\begin{equation}
	\label{eqCubesFarWeak}
	\norm{K * b_{n} - T_{y}^{1}(K * b_{n})}_{L^{\frac{N}{N-1}}\lw(B_{r}(y))}
	\le \C \, r^{N} \int_{Q_{n}}{\frac{d(z, F)}{\abs{y - z}^{N + 1}} \dif\abs{\mu}(z)}.
\end{equation}

The class \(\mathcal{C}_{J, r}\) gathers the cubes which are \emph{close to \(y\)}.{}
We claim in this case that, for every \(n \in \mathcal{C}_{J, r}\)\,,
\begin{equation}
	\label{eqCubesClose}
	\norm{K * b_{n} - T_{y}^{1}(K * b_{n})}_{L^{\frac{N}{N-1}}\lw(B_{r}(y))}
	\le \C \biggl(\abs{\mu}(Q_{n}) + r^{N} \int_{Q_{n}}{\frac{d(z, F)}{\abs{y - z}^{N + 1}} \dif\abs{\mu}(z)}\biggr).
\end{equation}
Indeed, the decay assumption~\eqref{eqKernel} on \(K\) implies that \(K \in L^{\frac{N}{N-1}}\lw (\R^{N})\).{}
Then, by Young's inequality \eqref{eqYoung} and \eqref{eqEstimateBadCubes} we get 
\begin{equation}
\label{eqFunctionCubeCloseWeakNorm}
\norm{K * b_{n}}_{L^{\frac{N}{N-1}}\lw(B_{r}(y))}
\le \norm{K}_{L^{\frac{N}{N-1}}\lw(\R^{N})} \norm{b_{n}}_{\cM(\R^{N})}
\le \C \abs{\mu}(Q_{n}).
\end{equation}
Using \Cref{lemmaDecayEstimateFar} and \eqref{eqMarcinkiewicz}, we also have
\[{}
\abs{K * b_{n}(y)}
\le \frac{C'' \ell_{n}}{\abs{y - \bar{z}_{n}}^{N}} \norm{b_{n}}_{\cM(\R^{N})}
\le \Cl{cte-1410} \abs{y - \bar{z}_{n}} \int_{Q_{n}}{\frac{d(z, F)}{\abs{y - z}^{N+1}} \dif\abs{\mu}(z)}.
\]
Since \(\epsilon\abs{y - \bar{z}_{n}} \le r\), we thus have
\begin{equation}
\label{eqFunctionCubeClose}
\abs{K * b_{n}(y)}
\le \frac{\Cr{cte-1410} r}{\epsilon} \int_{Q_{n}}{\frac{d(z, F)}{\abs{y - z}^{N + 1}} \dif\abs{\mu}(z)}.
\end{equation}
Estimate \eqref{eqCubesClose} now follows from the combination of \eqref{eqFunctionCubeCloseWeakNorm}, \eqref{eqFunctionCubeClose} and \eqref{eqGradientAnyCube}.

Note that for every \(n \in \mathcal{C}_{J, r}\) we have \(Q_{n} \subset B_{\gamma r}(y)\), for some constant \(\gamma > 0\) depending on \(N\).{}
Since \(\bigcup\limits_{n \in \mathcal{C}_{J, r}}{Q_{n}} \subset B_{\gamma r}(y) \setminus F\) and the cubes \(Q_{n}\) are disjoint, we deduce from \eqref{eqCubesFarWeak} for  \(n \in \mathcal{F}_{J, r}\) and \eqref{eqCubesClose} for  \(n \in \mathcal{C}_{J, r}\) that
\begin{multline*}
\sum_{n = J}^{\infty}{\norm{K * b_{n} - T_{y}^{1}(K * b_{n})}_{L^{\frac{N}{N-1}}\lw(B_{r}(y))}}\\
\le \Cl{cte-1304} \biggl(\abs{\mu}(B_{\gamma r}(y) \setminus F) + r^{N} \int_{\bigcup\limits_{n = J}^{\infty}{Q_{n}}}{\frac{d(z, F)}{\abs{y - z}^{N + 1}} \dif\abs{\mu}(z)}\biggr).
\end{multline*}
Recall that for almost every \(y \in F\) we have
\begin{equation}
\label{eqConditionsF}
\lim_{r \to 0}{\frac{\abs{\mu}(B_{\gamma r}(y) \setminus F)}{r^{N}}}
= 0
\quad \text{and} \quad
\int_{\R^{N} \setminus F}{\frac{d(z, F)}{\abs{y - z}^{N + 1}} \dif\abs{\mu}(z)} < \infty.
\end{equation}
The first assertion follows from  the Besicovitch differentiation theorem, while the second one is a consequence of
inequality \eqref{eqMarcinkiewiczRemark} in \Cref{remarkMarcinkiewicz} satisfied by the Marcinkiewicz integral.
At a point \(y\) where the first property in \eqref{eqConditionsF} holds, we have
\[{}
\limsup_{r \to 0}{\frac{\sum\limits_{n = J}^{\infty}{\norm{K * b_{n} - T_{y}^{1}(K * b_{n})}_{L^{\frac{N}{N-1}}\lw(B_{r}(y))}}}{r^{N}}}
\le \Cr{cte-1304} \int_{\bigcup\limits_{n = J}^{\infty}{Q_{n}}}{\frac{d(z, F)}{\abs{y - z}^{N + 1}} \dif\abs{\mu}(z)}.
\]  
It thus follows from \eqref{eqTaylorSeries}, \eqref{eqDiffLpGood} and \eqref{eqDiffLpBad} that
\[{}
\limsup_{r \to 0}{\frac{\norm{K * \mu - P_{y}}_{L^{\frac{N}{N-1}}\lw(B_{r}(y))}}{r^{N}}}
\le \Cr{cte-1304}\int_{\bigcup\limits_{n = J}^{\infty}{Q_{n}}}{\frac{d(z, F)}{\abs{y - z}^{N + 1}} \dif\abs{\mu}(z)}.
\]
At a point \(y\) where the second property in \eqref{eqConditionsF} holds, the integral in the right-hand side converges to zero as \(J \to \infty\) and we deduce that the limsup in the left-hand side vanishes.{}
Hence, by uniqueness of the Taylor approximation we have 
\[{}
P_{y} = T_{y}^{1}(K * \mu){}
\]
and \(K * \mu\) is \(L^{\frac{N}{N-1}}\lw\)-differentiable at almost every \(y \in F\).
Since \(F = F_{t}\) satisfies \eqref{eqWeakMaximalInequality}, the conclusion of the theorem then follows by letting \(t \to \infty\).
\end{proof}

\section{Existence of $\apDD{u}$ when $\Delta u$ is a mesure}\label{5}

Let \(E : \R^{N} \setminus \{0\} \to \R\) be the fundamental solution of \(-\Delta\) defined in dimension \(N \ge 3\) by
\[{}
E(x) = \frac{1}{(N-2)\sigma_{N}} \frac{1}{\abs{x}^{N-2}},
\]
where \(\sigma_{N}\) denotes the area of the unit sphere in \(\R^{N}\).{}
Since $E \in (L^1+L^\infty)(\R^{N})$, the Newtonian potential \(E * \mu\) is defined almost everywhere for a finite Borel measure \(\mu\) in \(\R^{N}\) and belongs to \(L^{1}\loc(\R^{N})\).
In addition, \(E * \mu \in W^{1, p}\loc(\R^{N})\) for every \(1\leq p< \frac{N}{N-1} \) and one has
\[{}
- \Delta (E * \mu){}
= \mu{}
\quad \text{in the sense of distributions in \(\R^{N}\);}
\]
see Example~2.12 in \cite{Ponce2016}.
Since 
\(\nabla(E * \mu)= (\nabla E) * \mu\) and
\[{}
\nabla E(x) 
= - \frac{1}{\sigma_{N}} \frac{x}{\abs{x}^{N}}
\]
satisfies \eqref{eqKernel}, we have by \Cref{theoremADSingularIntegral} that \(\nabla(E * \mu)\) is approximately differentiable almost everywhere and its approximate derivative \(\apDD{(E * \mu)} = \apD{((\nabla E) * \mu)}\) satisfies
\begin{equation}
	\label{eqEstimateNewtonian}
	\seminorm{\apDD{(E * \mu)}}_{L^{1}\lw(\R^{N})}
	\le C \norm{\mu}_{\cM(\R^{N})}.
\end{equation}
We apply this estimate to identify the trace of \(\apDD{(E * \mu)}\) in terms of \(\mu\):

\begin{proposition}\label{propositionEqualityLaplacian}
Let \(N \ge 3\).{}
For every finite Borel measure $\mu$ in \(\R^{N}\), the absolutely continuous part of \(\mu\) with respect to the Lebesgue measure satisfies
\[{}
\mu\la = - \Trace{( \apDD(E * \mu))} \dif x.{}
\]
\end{proposition}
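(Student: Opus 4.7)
I would split $\mu$ via its Lebesgue decomposition $\mu = f\,\dif x + \mu_{s}$, where $f \in L^{1}(\R^{N})$ is the density of $\mu\la$ and $\mu_{s}$ is mutually singular with Lebesgue measure. By linearity of the convolution and of the approximate derivative, $\apDD{(E*\mu)} = \apDD{(E*f)} + \apDD{(E*\mu_{s})}$ almost everywhere, so the proposition reduces to proving $\Trace{(\apDD{(E*f)})} = -f$ and $\Trace{(\apDD{(E*\mu_{s})})} = 0$, both almost everywhere.

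First I would dispose of the absolutely continuous part by truncation. Set $f_{n} \vcentcolon= f\chi_{\{\abs{f}\le n\}} \in L^{1}\cap L^{\infty}$. Classical $L^{p}$ Calder\'on-Zygmund theory gives $E*f_{n} \in W^{2,p}\loc(\R^{N})$ for every $p \in (1,\infty)$, so $\apDD{(E*f_{n})}$ coincides with the classical Hessian $D^{2}(E*f_{n})$ almost everywhere, and $\Trace{(D^{2}(E*f_{n}))} = \Delta(E*f_{n}) = -f_{n}$ almost everywhere. Applying the weak-$L^{1}$ estimate \eqref{eqEstimateNewtonian} to the signed measure $(f - f_{n})\,\dif x$,
\[
\seminorm{\apDD{(E*f)} - \apDD{(E*f_{n})}}_{L^{1}\lw(\R^{N})} \le C \norm{f - f_{n}}_{L^{1}(\R^{N})} \to 0,
\]
hence $\Trace{(\apDD{(E*f_{n})})} \to \Trace{(\apDD{(E*f)})}$ in measure. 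Extracting a common subsequence along which both $\Trace{(\apDD{(E*f_{n})})}$ and $f_{n}$ converge pointwise a.e., I conclude that $\Trace{(\apDD{(E*f)})} = -f$ almost everywhere.

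For the singular part, set $v_{s} \vcentcolon= E*\mu_{s}$. By the Besicovitch differentiation theorem applied to $|\mu_{s}|$, at Lebesgue-a.e.\ $x$ one has $|\mu_{s}|(B_{r}(x))/|B_{r}(x)| \to 0$ as $r\to 0$. Restricting to such $x$ at which $\apDD{v_{s}}(x)$ also exists, I would exploit $\nabla v_{s} \in L^{1}\loc(\R^{N};\R^{N})$ and $-\Delta v_{s} = \mu_{s}$ distributionally to derive, via smooth approximation of $\chi_{B_{r}(x)}$, the Gauss-Green identity
\[
\mu_{s}(B_{r}(x)) = - \int_{\partial B_{r}(x)} \nabla v_{s}\cdot \nu\,\dif\H^{N-1}
\]
for a.e.\ $r>0$. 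Substituting the approximate Taylor expansion $\nabla v_{s}(y) = \nabla v_{s}(x) + \apDD{v_{s}}(x)[y-x] + \rho(y)$, the constant term integrates to zero by spherical symmetry and the linear term to $|B_{r}(x)|\,\Trace{(\apDD{v_{s}}(x))}$; dividing by $|B_{r}(x)|$ and letting $r\to 0$ gives $\Trace{(\apDD{v_{s}}(x))} = 0$, provided the spherical integral of $\rho$ is of order $o(r^{N})$.

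The main obstacle is precisely this last error estimate on the sphere, since approximate differentiability yields vanishing of the exceptional set only in $N$-dimensional Lebesgue density, not on $(N-1)$-dimensional hypersurfaces. To overcome it, my plan is to first establish the ball-integrated bound $\int_{B_{R}(x)}\abs{\rho(y)}\,\dif y = o(R^{N+1})$ by splitting $B_{R}(x)$ along the superlevel sets of $\abs{\rho(y)}/\abs{y-x}$ and using the pointwise Lipschitz-type bound $|\rho(y)| \le (I(x)+I(y)+|\apDD{v_{s}}(x)|)|y-x|$ from Theorem~\ref{theoremADSingularIntegralAlternative} with $I \in L^{1}\lw(\R^{N})$; then Fubini applied on an annulus $B_{2r}(x)\setminus B_{r}(x)$ will extract a sequence of good radii $r_{n}\to 0$ along which $\int_{\partial B_{r_{n}}(x)}\abs{\rho}\,\dif\H^{N-1} = o(r_{n}^{N})$, as required.
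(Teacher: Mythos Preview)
Your treatment of the absolutely continuous part matches the paper's: approximate $f$ in $L^{1}$ by functions for which the identity holds classically, then pass to the limit via the weak-$L^{1}$ estimate \eqref{eqEstimateNewtonian}. The paper uses $f_{n}\in C_{c}^{\infty}(\R^{N})$ while you truncate; either choice works.

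For the singular part your route diverges from the paper's, which is considerably simpler and mirrors the first step: by inner regularity choose compact Lebesgue-null sets $K_{n}$ with $\abs{\mu_{s}}(\R^{N}\setminus K_{n})\to 0$; since $E*(\mu_{s}\lfloor_{K_{n}})$ is harmonic on the full-measure set $\R^{N}\setminus K_{n}$, its approximate Hessian has zero trace almost everywhere, and \eqref{eqEstimateNewtonian} applied to $\mu_{s}-\mu_{s}\lfloor_{K_{n}}$ forces $\Trace(\apDD(E*\mu_{s}))=0$ almost everywhere. No Gauss--Green, no spheres, no good-radius selection.

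Your spherical-flux argument can be made to work, but the justification you sketch for the key estimate $\int_{B_{R}(x)}\abs{\rho}=o(R^{N+1})$ has a gap. From the Lipschitz bound $\abs{\rho(y)}\le (I(x)+I(y)+\abs{\apDD v_{s}(x)})\abs{y-x}$ with $I\in L^{1}\lw(\R^{N})$ you would, after splitting off the good set $\{\abs{\rho}/\abs{y-x}\le\epsilon\}$, still need to control $\int_{A_{\epsilon}\cap B_{R}(x)}I$; but weak-$L^{1}$ functions are in general not locally integrable (e.g.\ $\abs{y}^{-N}$), so no bound on this integral follows from $[I]_{L^{1}\lw}$ and $\abs{A_{\epsilon}\cap B_{R}(x)}$ alone. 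What you actually need here is the $L^{1}$-differentiability of $\nabla v_{s}=(\nabla E)*\mu_{s}$ almost everywhere, which does hold but requires either Theorem~\ref{theoremDifferentiability} (weak-$L^{N/(N-1)}$ differentiability implies $L^{1}$-differentiability on balls since $N/(N-1)>1$) or the Alberti--Bianchini--Crippa result cited in the introduction. With that input in place of Theorem~\ref{theoremADSingularIntegralAlternative}, your Fubini extraction of good radii and the spherical computation go through.
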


	Observe that for \(f \in C_{c}^{\infty}(\R^{N})\), the Newtonian potential \(E * f\) is a smooth function.
	Thus, \(\apDD(E * f)\) is the classical second-order derivative of \(E * f\).{}
	Since \(E * f\) solves the Poisson equation with density \(f\), we get
	\[{}
	f 
	= - \Delta (E * f)
	= - \Trace{( \apDD(E * f))}
	\quad \text{in \(\R^{N}\)},
	\]
	which is \Cref{propositionEqualityLaplacian} for smooth functions.
	
	Next, for a finite Borel measure \(\mu\), the Newtonian potential \(E * \mu\) is smooth and harmonic in \(\R^{N} \setminus \supp{\mu}\).{}
	Thus,
	\[{}
	\Trace{(\apDD(E * \mu))}
	= \Delta(E * \mu){}
	= 0
	\quad \text{in \(\R^{N} \setminus \supp{\mu}\).}
	\]
	In particular, when the support \(\supp{\mu}\) is negligible with respect to the Lebesgue measure, one has
	\[{}
	\Trace{(\apDD(E * \mu))}
	= 0
	\quad \text{almost everywhere in \(\R^{N}\).}
	\]
	The proof of \Cref{propositionEqualityLaplacian} is based on an approximation argument that relies on 
	estimate \eqref{eqEstimateNewtonian} and these two cases.
	
\resetconstant	
\begin{proof}[Proof of \Cref{propositionEqualityLaplacian}]
	We first assume that \(\mu\) is absolutely continuous with respect to the Lebesgue measure.
	Thus, \(\mu = \mu\la = f \dif x\) for some \(f \in L^{1}(\R^{N})\).{}
	Take a sequence \((f_{n})_{n \in \N}\) in \(C_{c}^{\infty}(\R^{N})\) that converges to \(f\) in \(L^{1}(\R^{N})\).{}
	For every \(n \in \N\), 
	\[{}
	f_{n} 
	= - \Trace{( \apDD(E * f_{n}))}
	\quad \text{in \(\R^{N}\).}
	\]
	Thus, by the triangle inequality and linearity of the approximate derivative,
	\[{}
	\bigabs{\Trace{(\apDD{(E * f)})}  + f_{n}}
	\le \bigabs{\apDD(E * f) - \apDD{(E * f_{n})}}
	= \bigabs{\apDD(E * (f - f_{n}))}.
	\]
	Applying estimate \eqref{eqEstimateNewtonian} to \(f - f_{n}\), we thus have
	\[{}
	\begin{split}
	\bigseminorm{\Trace{(\apDD{(E * f)})}  + f_{n} }_{L^{1}\lw(\R^{N})}
	& \le \bigseminorm{\apDD(E * (f - f_{n}))}_{L^{1}\lw(\R^{N})}\\
	& \le C \norm{(f - f_{n}) \dif x}_{\cM(\R^{N})}
	= C \norm{f - f_{n}}_{L^{1}(\R^{N})}.
	\end{split}
	\]
	Hence, the sequence \((f_{n})_{n \in \N}\) converges in measure simultaneously to \(f\) and \(-\Trace{(\apDD{(E * f)})}\).{}
	By uniqueness of the limit, we deduce that 
	\[{}
	- \Trace{( \apDD(E * f))}
	= f
	\quad \text{almost everywhere in \(\R^{N}\),}
	\]
	when \(\mu = f \dif x\) is absolutely continuous with respect to the Lebesgue measure.
	
	We now assume that \(\mu\) is singular with respect to the Lebesgue measure.
	Let \(S \subset \R^{N}\) be a negligible Borel set such that \(\abs{\mu}(\R^{N} \setminus S) = 0\).{}
	By inner regularity of \(\abs{\mu}\), there exists a sequence of compact sets \(K_{n} \subset S\) such that 
	\[{}
	\lim_{n \to \infty}{\abs{\mu}(S \setminus K_{n})} = 0.{}
	\]
	Each measure \(\mu_{n} = \mu\lfloor_{K_{n}}\) is supported in the negligible set \(K_{n}\).{}
	Since \(E * \mu_{n}\) is harmonic in \(\R^{N} \setminus K_{n}\), we thus have
	\[{}
	\Trace{(\apDD(E * \mu_{n}))}
	= 0
	\quad \text{almost everywhere in \(\R^{N}\).}
	\]
	Again, by linearity of the approximate derivative,
	\[{}
	\bigabs{\Trace{(\apDD{(E * \mu)})}}
	\le \bigabs{\apDD(E * \mu) - \apDD{(E * \mu_{n})}}	
	= \bigabs{\apDD(E * (\mu - \mu_{n}))}
	\]
	almost everywhere in \(\R^{N}\).
	Estimate~\eqref{eqEstimateNewtonian} applied to \(\mu - \mu_{n}\) then implies
	\[{}
	\bigseminorm{\Trace{(\apDD{(E * \mu)})}}_{L^{1}\lw(\R^{N})}
	\le C \norm{\mu - \mu_{n}}_{\cM(\R^{N})}
	= \abs{\mu}(\R^{N} \setminus K_{n})
	= \abs{\mu}(S \setminus K_{n}).
	\]	
	As \(n \to \infty\), the right-hand side converges to zero.
	Hence, 
	\[{}
	\Trace{(\apDD{(E * \mu)})} = 0
	\quad \text{almost everywhere in \(\R^{N}\),}
	\]
	when \(\mu\) is singular with respect to the Lebesgue measure.
	
	The proof now follows from the linearity of \(E * \mu\) since any finite Borel measure \(\mu\) has a decomposition of the form
	\[{}
	\mu{}
	= \mu\la + \mu\ls{}
	= f \dif x + \mu\ls
	\quad \text{with \(f \in L^{1}(\R^{N})\),}
	\]
	where \(\mu\ls\) is the singular part of \(\mu\) with respect to the Lebesgue measure.
	By the two cases considered above, we have
	\[{}
	\Trace{(\apDD{(E * \mu)})}
	= \Trace{(\apDD{(E * f)})} + \Trace{(\apDD{(E * \mu\ls)})}
	= - f
	\]
	almost everywhere in \(\R^{N}\).
\end{proof}

In dimension \(N = 2\), the fundamental solution of \(-\Delta\) is
\[{}
E(x) = \frac{1}{2\pi} \log{\frac{1}{\abs{x}}}
\]
and the Newtonian potential \(E * \mu\) is well-defined for every finite Borel measure with \emph{compact support} in \(\R^{2}\).{}
The counterpart of \Cref{propositionEqualityLaplacian} holds for these measures, with the same proof.

In every dimension \(N \ge 2\), \Cref{propositionEqualityLaplacian} has a counterpart for solutions of the Dirichlet problem
\begin{equation}
	\label{eqDirichletProblem}
\left\{
\begin{alignedat}{2}
	- \Delta u & = \mu && \quad \text{in \(\Omega\),}\\
	u & = 0 && \quad \text{in \(\partial\Omega\),}
\end{alignedat}
\right.
\end{equation}
involving a finite Borel measure \(\mu\) in a smooth bounded open subset \(\Omega \subset \R^{N}\).
By a solution of \eqref{eqDirichletProblem}, we mean that \(u \in W_{0}^{1, 1}(\Omega)\) satisfies	
	\[{}
	- \Delta u = \mu{}
	\quad \text{in the sense of distributions in \(\Omega\).}
	\]
Littman, Stampacchia and Weinberger~\cite{LittmanStampacchiaWeinberger1963} proved that the Dirichlet problem above has a unique solution for every \(\mu\).{}
This solution has additional imbedding properties that can be formulated in terms of weak-Lebesgue spaces.
For example,  using Stampacchia's truncation method one shows in dimension \(N \ge 3\) that
\[{}
\seminorm{u}_{L^{\frac{N}{N-2}}\lw(\Omega)} + \seminorm{\nabla u}_{L^{\frac{N}{N-1}}\lw(\Omega)}
\le C \norm{\Delta u}_{\cM(\Omega)},
\]
where \(C > 0\) depends on \(N\); see \cite{Ponce2016}*{Proposition~5.7}.
A second-order counterpart of this inequality is

\begin{proposition}
	\label{propositionDirichletProblem}
	Let \(N \ge 2\) and let \(\Omega \subset \R^{N}\) be a smooth bounded open set.
	For every finite Borel measure \(\mu\) in \(\Omega\), the solution \(u\) of the Dirichlet problem \eqref{eqDirichletProblem} has a second-order approximate derivative \(\apDD{u}\) almost everywhere in \(\Omega\) that satisfies
	\[{}
	(\Delta u)\la = \Trace{( \apDD u)} \dif x
	\quad \text{and} \quad
	\seminorm{\apDD{u}}_{L^{1}\lw(\Omega)}
	\le C' \norm{\Delta u}_{\cM(\Omega)},
	\]
	for some constant \(C' > 0\) depending on \(N\).
\end{proposition}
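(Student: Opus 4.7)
\smallskip

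The plan is to reduce \Cref{propositionDirichletProblem} to \Cref{propositionEqualityLaplacian} by splitting $u$ into a Newtonian potential plus a harmonic correction. Extend $\mu$ by zero to a finite Borel measure $\mu_{0}$ on $\R^{N}$ with $\norm{\mu_{0}}_{\cM(\R^{N})} = \norm{\mu}_{\cM(\Omega)} = \norm{\Delta u}_{\cM(\Omega)}$, and set $w \vcentcolon= E * \mu_{0}$ when $N \ge 3$, respectively the log potential adapted to a bounded set when $N = 2$ (as noted in the paragraph following \Cref{propositionEqualityLaplacian}, this is well defined because $\mu_{0}$ has compact support in $\bar{\Omega}$). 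By \Cref{propositionEqualityLaplacian} together with its $N=2$ counterpart,
\[
(\mu_{0})\la = -\Trace{(\apDD w)} \dif x
\quad \text{and} \quad
\seminorm{\apDD w}_{L^{1}\lw(\R^{N})}
\le C \norm{\mu}_{\cM(\Omega)}.
\]

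Define $h \vcentcolon= u - w$ in $\Omega$. Since $-\Delta u = \mu$ and $-\Delta w = \mu_{0} = \mu$ in $\Omega$ in the sense of distributions, $\Delta h = 0$ distributionally in $\Omega$. By Weyl's lemma, $h$ agrees almost everywhere with a smooth harmonic function in $\Omega$, and thus $\apDD u = \apDD w + D^{2} h$ almost everywhere in $\Omega$. Taking traces and using that $\Delta h = 0$,
\[
\Trace{(\apDD u)} \dif x
= \Trace{(\apDD w)} \dif x
= -(\mu_{0})\la \restriction_{\Omega}
= -\mu\la
= (\Delta u)\la,
\]
which is the first assertion of the proposition. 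This also shows existence of $\apDD u$ a.e.\ in~$\Omega$.

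The main obstacle is the weak-$L^{1}$ estimate, which by the triangle inequality $\abs{\apDD u} \le \abs{\apDD w} + \abs{D^{2} h}$ reduces to bounding the harmonic correction $D^{2} h$ in $L^{1}\lw(\Omega)$ by $\norm{\mu}_{\cM(\Omega)}$. I would proceed by approximation: take mollifications $\mu_{k} = \rho_{k} * \mu_{0} \in C_{c}^{\infty}(\Omega')$ in a slightly larger smooth set $\Omega' \supset \bar{\Omega}$, with $\norm{\mu_{k}}_{L^{1}(\Omega)} \le \norm{\mu}_{\cM(\Omega)}$ and $\mu_{k} \rightharpoonup \mu_{0}$ weakly-$\ast$. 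The corresponding solutions $u_{k}$ of \eqref{eqDirichletProblem} belong to $C^{\infty}(\bar{\Omega})$ and decompose as $u_{k} = w_{k} + h_{k}$, with $w_{k} = E * \mu_{k}$ and $h_{k}$ classically harmonic in $\Omega$ with boundary values $-w_{k}\vert_{\partial\Omega}$. By the Poisson integral representation,
\[
D^{2} h_{k}(x)
= - \int_{\partial\Omega}{D_{x}^{2} P_{\Omega}(x, z) \, w_{k}(z) \dif\cH^{N-1}(z)},
\]
so $D^{2} h_{k}$ is a singular integral of the boundary trace of $w_{k}$ against the second derivative of the Poisson kernel $P_{\Omega}$ of $\Omega$. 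Standard Calderón-Zygmund estimates for this kernel on the smooth boundary $\partial\Omega$, combined with control of $\norm{w_{k}\vert_{\partial\Omega}}_{L^{1}(\partial\Omega)}$ by $\norm{\mu_{k}}_{\cM(\Omega)}$ coming from the pointwise bound $\abs{w_{k}(z)} \le \int_{\Omega}{\abs{E(z - y)}\abs{\mu_{k}}\dif y}$ and Fubini, yield a uniform estimate $\seminorm{D^{2} h_{k}}_{L^{1}\lw(\Omega)} \le C'' \norm{\mu_{k}}_{\cM(\Omega)}$.

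Finally, passing to the limit $k \to \infty$: by $L^{p}$-stability of the Dirichlet problem with measure data ($p < \frac{N}{N-1}$), $u_{k} \to u$ in $W^{1,p}(\Omega)$, which implies $\apDD u_{k} \to \apDD u$ in measure on $\Omega$; closure of weak-$L^{1}$ balls under convergence in measure then transfers the uniform bound to $\apDD u$. Combining with the $L^{1}\lw$ bound on $\apDD w$ from Step~1 completes the proof of the weak-$L^{1}$ estimate. The most delicate step is the uniform Calderón-Zygmund bound on $D^{2} h_{k}$ via the Poisson kernel of $\Omega$; the rest of the argument is essentially a packaging of \Cref{propositionEqualityLaplacian} together with elementary properties of harmonic functions.
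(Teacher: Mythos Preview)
Your argument for the trace identity $(\Delta u)\la = \Trace(\apDD u)\dif x$ via the decomposition $u = w + h$ with $h$ harmonic is correct and close in spirit to the paper.

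The weak-$L^{1}$ estimate, however, has a genuine gap. The step you call ``standard Calder\'on--Zygmund estimates'' for the second derivatives of the Poisson extension is not standard, and in fact the map $g \in L^{1}(\partial\Omega) \mapsto D^{2}(\text{harmonic extension of }g) \in L^{1}\lw(\Omega)$ is \emph{unbounded}. Already for the half-plane $\R^{2}_{+}$ and $g_{\epsilon} = (2\epsilon)^{-1}\chi_{[-\epsilon,\epsilon]}$ one has $\norm{g_{\epsilon}}_{L^{1}(\R)} = 1$, yet near each jump point the extension behaves like a multiple $(2\epsilon)^{-1}$ of the angle function, so $\abs{D^{2}h_{\epsilon}} \sim (\epsilon r^{2})^{-1}$ and $\seminorm{D^{2}h_{\epsilon}}_{L^{1}\lw(\R^{2}_{+})} \gtrsim \epsilon^{-1}$. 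Hence controlling $\norm{w_{k}}_{L^{1}(\partial\Omega)}$ by $\norm{\mu_{k}}_{\cM}$ (which is fine, though with a constant depending on $\Omega$) does not yield the claimed bound on $D^{2}h_{k}$. The limiting step is also unjustified: $W^{1,p}$ convergence of $u_{k}$ does not by itself give convergence in measure of the approximate second derivatives, and since $\norm{\mu_{k}-\mu}_{\cM}$ need not tend to zero you cannot invoke the weak-$L^{1}$ estimate for the difference either.

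The paper bypasses the harmonic correction entirely. Extending $u$ by zero to $U$ on $\R^{N}$, Poincar\'e's balayage gives that $\Delta U$ is a finite measure with compact support and $\norm{\Delta U}_{\cM(\R^{N})} \le 2\norm{\Delta u}_{\cM(\Omega)}$; the boundary layer is absorbed into $\Delta U$. Since $U$ has compact support, $U = E * (-\Delta U)$, and both the trace identity and the weak-$L^{1}$ bound follow at once from \Cref{propositionEqualityLaplacian} and \eqref{eqEstimateNewtonian}, with a constant depending only on $N$. If you want to salvage your decomposition, the right way to estimate $D^{2}h$ is to observe that $h$ (extended as $-w$ outside $\Omega$) is itself the Newtonian potential of the boundary part of $-\Delta U$, whose mass is controlled by balayage---but at that point you have essentially reproduced the paper's argument.
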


\resetconstant
\begin{proof}
	Let \(U : \R^{N} \to \R\) be the function defined by
	\[{}
	U(x)= 
	\begin{cases}
	u 	& \text{if \(x \in \Omega\),}\\
	0 	& \text{if \(x \in \R^{N} \setminus \Omega\).}
	\end{cases}
	\]
	From Poincaré's balayage method, see \cite{Ponce2016}*{Corollary~7.4}, one has that \(\Delta U\) is a finite Borel measure with compact support in \(\R^{N}\) and
	\[{}
	\norm{\Delta U}_{\cM(\R^{N})}
	\le 2 \norm{\Delta u}_{\cM(\Omega)}.
	\]
	Since \(U\) has compact support in \(\R^{N}\), we have 
	\[{}
	U = E * (-\Delta U)
	\quad \text{in \(\R^{N}\).}
	\]
	This identity is indeed true for functions in \(C_{c}^{\infty}(\R^{N})\) and the general case follows by approximation using \(\rho_{n} * U\), where \((\rho_{n})_{n \in \N}\) is a sequence of mollifiers in \(C_{c}^{\infty}(\R^{N})\).
	We deduce from \Cref{propositionEqualityLaplacian} (and its counterpart in dimension \(2\)) that \(u = U\) has a second-order approximate derivative almost everywhere in \(\Omega\) with
	\[{}
	(\Delta u)\la = (\Delta U)\la = \Trace{( \apDD U)} \dif x = \Trace{( \apDD u)} \dif x
	\]
	 and
	\[{}
	\seminorm{\apDD{u}}_{L^{1}\lw(\Omega)}
	\le \Cl{cte-1229} \norm{\Delta U}_{\cM(\R^{N})}
	\le 2\Cr{cte-1229} \norm{\Delta u}_{\cM(\Omega)}.
	\qedhere
	\]	
\end{proof}

\section{Proof of \Cref{th:main1}}
\label{sectionProofThm}

Given a bounded open subset $\omega \Subset \Omega$, let $\varphi \in C^\infty_c(\Omega)$ be such that $\varphi = 1$ on $\omega$. 
The measure  \(\mu = -\Delta(u\varphi)\) is finite, has compact support in \(\Omega\) and can be written as 
\[{}
\mu 
= - \bigl( \Delta u \, \varphi + 2 \nabla u \cdot \nabla \varphi +u \Delta \varphi \bigr){}
\quad \text{in the sense of distributions in \(\R^{N}\);}
\]
see \cite{Ponce2016}*{Proposition~6.11}. 
In particular, 
\begin{equation}
	\label{eqMuSubset}
	\mu = -\Delta u
	\quad \text{in \(\omega\).}
\end{equation}

We extend the measure \(\mu\) to \(\R^{N}\) as zero on subsets of \(\R^{N} \setminus \Omega\).
The Newtonian potential \(E * \mu\) and \(u\varphi\) satisfy the same Poisson equation in \(\Omega\).{}
Thus, by Weyl's lemma we have
\[{}
E * \mu{}
= u \varphi + h
\quad \text{almost everywhere in \(\Omega\),}
\]
where $h$ is a harmonic function in $\Omega$. 
By \Cref{propositionEqualityLaplacian} (and its counterpart in dimension \(2\)), we deduce that $\nabla(u \varphi)$ is approximately differentiable almost everywhere in $\Omega$ and
\begin{equation}
	\label{eqEqualityLaplacianLocal}
	\mu\la{}
= - \Trace{(\apDD (u \varphi))} \dif x.
\end{equation}
Since $\nabla u = \nabla(u \varphi)$ in $\omega$ and the notion of approximate derivative is local, \(\nabla u\) is approximately differentiable almost everywhere in \(\omega\) and
\(\apDD u
= \apDD (u \varphi)\) in \(\omega\).
It then follows from \eqref{eqMuSubset} and \eqref{eqEqualityLaplacianLocal} that
\[
(\Delta u)\la
= - \mu\la{}
= \Trace{(\apDD u)} \dif x
\quad \text{in \(\omega\)}.
\]
Since this is true for every bounded open subset $\omega \Subset \Omega$, we have \((\Delta u)\la
= \Trace{(\apDD u)} \dif x\) in \(\Omega\).

To conclude the proof we rely on \eqref{eqDensityPoint_ter}, that implies
\[{}
\apDD{u}
= \apD{(\nabla u)}
= 0
\quad \text{almost everywhere on $\{\nabla u=e \}$}
\]
for every \(e \in \R^{N}\).
Thus,
\begin{equation}
\label{eqProofLevelSets}
(\Delta u)\la{}
= \Trace{(\apDD u)} \dif x
=0 \quad \text{almost everywhere on $\{\nabla u=e \}$}.
\end{equation}
Since $u \in W^{1,1}\loc(\Omega)$, we also have $\nabla u=0$ almost everywhere on every level set $\{u=\alpha \}$ with \(\alpha \in \R\).
Hence, applying \eqref{eqProofLevelSets} with \(e = 0\) we also get 
\[{}
(\Delta u)\la=0{}
\quad \text{almost everywhere on $\{u=\alpha \}$.}
\] 
This completes the proof of the theorem.
\qed

\section{Two applications}\label{7}

\subsection{Level sets of subharmonic functions}
As a first application of \Cref{th:main1} we give a new proof of Theorem~B.1 of Frank and Lieb~\cite{FrankLieb} about level sets of subharmonic functions.

\begin{proposition}\label{FrankLieb}
Let \(\Omega \subset \R^{N}\) be an open set and let $u\in L^1\loc(\Omega)$ be such that $\Delta u$ is a locally finite Borel measure in \(\Omega\).
If
\begin{equation}\label{new_hypo}
|\Delta u|\geq\theta\dif x,
\end{equation}
where \(\theta\) is a Borel-measurable function in \(\Omega\) such that
\begin{equation}\label{new_hypo-bis}
\theta>0
\quad \text{almost everywhere in $\Omega$,}
\end{equation} 
then the level set $\{u=\alpha\}$ is Lebesgue-negligible for every $\alpha \in \R$. 
\end{proposition}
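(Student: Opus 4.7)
The plan is to deduce \Cref{FrankLieb} almost immediately from \Cref{th:main1}, using only standard facts about the Lebesgue decomposition of signed measures. The essential point is that the pointwise lower bound $|\Delta u| \geq \theta\dif x$ transfers to a lower bound on the absolutely continuous part of the signed measure $\Delta u$, while \Cref{th:main1} forces that same part to vanish on every level set of $u$.

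First, I would fix $\alpha \in \R$ and let $\Delta u = (\Delta u)\la\, \dif x + (\Delta u)\ls$ denote the Lebesgue decomposition of the locally finite signed Borel measure $\Delta u$. Since the total-variation measure decomposes compatibly, one has $|\Delta u| = |(\Delta u)\la| \dif x + |(\Delta u)\ls|$, where $|(\Delta u)\ls|$ is singular with respect to Lebesgue measure. The hypothesis $|\Delta u| \geq \theta \dif x$ then yields
\[
\bigabs{(\Delta u)\la} \geq \theta \quad \text{almost everywhere in } \Omega,
\]
because $\theta\dif x$ is absolutely continuous and the absolutely continuous part of $|\Delta u|$ dominates it (uniqueness of the Lebesgue decomposition).

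Next, I would invoke \Cref{th:main1}, which asserts that $(\Delta u)\la = \Trace{(\apDD u)}\dif x$ and that $(\Delta u)\la = 0$ almost everywhere on $\{u = \alpha\}$. Combining this with the previous displayed inequality, on $\{u = \alpha\}$ we would have almost everywhere
\[
0 = \bigabs{(\Delta u)\la} \geq \theta,
\]
which contradicts the standing assumption $\theta > 0$ a.e.\ in $\Omega$ on any subset of positive Lebesgue measure. Hence $\{u = \alpha\}$ must be Lebesgue-negligible.

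There is no real obstacle here: the whole argument is a short contradiction once \Cref{th:main1} is available, and the only subtle point is the first step, where one must remember that the map $\mu \mapsto \mu\la$ preserves the pointwise inequality $|\mu| \geq \theta \dif x$ by the uniqueness of the Lebesgue decomposition. The structure of the argument mirrors the original proof of Frank and Lieb, the novelty being that \Cref{th:main1} provides the required annihilation property $(\Delta u)\la = 0$ on $\{u = \alpha\}$ directly, without invoking the $L^1$-differentiability results used in earlier approaches.
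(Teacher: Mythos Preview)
Your proposal is correct and takes essentially the same approach as the paper's proof: both use the Lebesgue decomposition of $\Delta u$, invoke \Cref{th:main1} to get $(\Delta u)\la = 0$ a.e.\ on $\{u=\alpha\}$, and derive a contradiction with $\theta>0$ a.e. The only cosmetic difference is that you first extract the pointwise inequality $\abs{(\Delta u)\la}\ge\theta$ from uniqueness of the Lebesgue decomposition, whereas the paper integrates $\theta$ over $\{u=\alpha\}\setminus S$ (with $S$ carrying the singular part) and bounds that integral by zero; the substance is identical.
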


\begin{proof}
Let $\Delta u= (\Delta u)\la +(\Delta u)\ls$ be the decomposition of $\Delta u$ in terms of an absolutely continuous and a singular part with respect to the Lebesgue measure, and let $S \subset \Omega$ be a Borel-measurable set such that $|S|=0$ and 
$\abs{(\Delta u)\ls} (\Omega \setminus S)=0$.
For every \(\alpha \in \R\), we have
\begin{equation*}
\{u = \alpha\}
\subset \bigl(\{u = \alpha\} \setminus S\bigr) \cup S.
\end{equation*}
Since \(S\) is negligible, it suffices to prove that \(\{u = \alpha\} \setminus S\) is negligible.
To this end, we write the estimate
\[{}
\int_{\{u=\alpha\} \setminus S} \theta\dif x\leq \int_{\{u=\alpha\} \setminus S}{|\Delta u|}
\le \int_{\{u=\alpha\}}{\abs{(\Delta u)\la}} + \int_{\Omega \setminus S}{\abs{(\Delta u)\ls}}.
\]
Both integrals in the right-hand side vanish: The first one because of \Cref{th:main1} and the second one by the choice of \(S\).
Hence, the integral in the left-hand side also vanishes, and then, by assumption on \(\theta\), the set \(\{u=\alpha\} \setminus S\) 
must be negligible.
\end{proof}

Assumptions \eqref{new_hypo} and \eqref{new_hypo-bis} are implied by
\begin{equation}\label{old_hypo}
\int_K\Delta u\neq 0
\quad\text{for any compact set $K \subset \Omega$ with \(|K|>0\),}
\end{equation} 
which is weaker than the assumption made in \cite{FrankLieb}, namely positivity of \(\int_K\Delta u\). 
Indeed, let $S \subset \Omega$ be a Lebesgue-negligible set where the singular part of $|\Delta u|$ with respect to the Lebesgue measure $\mathrm{d}x$ is concentrated.
Condition \eqref{old_hypo} implies, by the Hahn decomposition and inner approximation of \(\Delta u\), 
\[
\int_B |\Delta u|>0
\quad \text{for any Borel set $B \subset \Omega$ with \(|B|>0\).}
\] 
We now take $\theta$ as the density of the absolutely continuous part of $|\Delta u|$ with respect to $\mathrm{d}x$.
Since the integral above vanishes with $B=\{\theta=0\}\setminus S$, we see that $\{\theta=0\}\setminus S$ and then
$\{\theta=0\}$ must be Lebesgue-negligible, which gives \eqref{new_hypo} and \eqref{new_hypo-bis}. 

As a consequence of \Cref{FrankLieb} above, we deduce Proposition~A.1 from \cite{FrankLieb}:

\begin{corollary}
\label{FrankLieb2}
Let $\Omega \subset \R^N$ be an open set and let $u : \Omega \to \R$ be a continuous function.
If there exists \(\epsilon > 0\) such that
\begin{equation}
	\label{eqViscosity}
\Delta u \ge \epsilon{}
\quad \text{in the viscosity sense in \(\Omega\)},
\end{equation}
then $\{u=\alpha\}$ is Lebesgue-negligible for every $\alpha \in \R$. 
\end{corollary}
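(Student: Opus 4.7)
The plan is to reduce \Cref{FrankLieb2} to \Cref{FrankLieb} by showing that the viscosity hypothesis upgrades to the distributional inequality $\Delta u \ge \epsilon \dif x$ in the sense of measures. Once this is established, the distributional Laplacian $\Delta u$ is automatically a locally finite Borel measure in $\Omega$ whose absolutely continuous density is bounded below by the positive constant $\epsilon$. Taking $\theta \equiv \epsilon$ in \Cref{FrankLieb}, assumptions \eqref{new_hypo} and \eqref{new_hypo-bis} are both satisfied, and the conclusion follows.

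The key intermediate step is the classical equivalence, for continuous functions, between being a viscosity subsolution of $\Delta v \ge 0$ and being subharmonic in the distributional sense. Specifically, I would first translate \eqref{eqViscosity} by setting
\[
v(x) \vcentcolon= u(x) - \frac{\epsilon}{2N}\,\abs{x}^{2},
\]
and observe that if $\psi \in C^{2}$ touches $v$ from below at some point $x_{0} \in \Omega$, then $\varphi(x) \vcentcolon= \psi(x) + \frac{\epsilon}{2N}\abs{x}^{2}$ touches $u$ from below at $x_{0}$; the viscosity hypothesis \eqref{eqViscosity} gives $\Delta\varphi(x_{0}) \ge \epsilon$, that is $\Delta\psi(x_{0}) \ge 0$. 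Hence $v$ is a continuous viscosity subsolution of $-\Delta v \le 0$.

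The main technical point is then to invoke the classical fact that a continuous viscosity subsolution of the Laplace equation is subharmonic in the ordinary sense (it satisfies the sub-mean-value inequality on every ball compactly contained in $\Omega$), which in turn implies that its distributional Laplacian is a nonnegative locally finite Borel measure. This can be seen either via a standard viscosity-theoretic argument comparing $v$ on balls with the harmonic function having the same boundary data, or by mollification: if $\rho_{n}$ are radial mollifiers, then $v_{n} \vcentcolon= \rho_{n} * v \ge v$ on shrinking open sets by the sub-mean-value inequality, and $v_{n} \to v$ locally uniformly, so for every nonnegative $\varphi \in C_{c}^{\infty}(\Omega)$ one passes to the limit in $\int v_{n}\Delta\varphi = \int \Delta v_{n}\, \varphi \ge \text{(lower bound)}$ to conclude $\int v\,\Delta\varphi \ge 0$.

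Once $\Delta v \ge 0$ is known to hold as a Radon measure, we deduce that
\[
\Delta u = \Delta v + \epsilon \dif x
\]
is a locally finite Borel measure in $\Omega$, and its absolutely continuous part with respect to the Lebesgue measure satisfies $(\Delta u)\la \ge \epsilon \dif x$. In particular $\abs{\Delta u} \ge \epsilon \dif x$, so hypotheses \eqref{new_hypo} and \eqref{new_hypo-bis} of \Cref{FrankLieb} hold with the constant function $\theta \equiv \epsilon$, and that proposition yields the negligibility of $\{u=\alpha\}$ for every $\alpha\in\R$. The main obstacle is the regularity-theoretic passage from viscosity subsolution to distributional subsolution, which is classical but must be invoked carefully since the Calderón--Zygmund machinery developed in the paper is not directly available for viscosity formulations.
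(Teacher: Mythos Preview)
Your approach is essentially the paper's: pass from the viscosity inequality $\Delta u \ge \epsilon$ to the distributional inequality $\Delta u \ge \epsilon \dif x$ (the paper simply cites Ishii~\cite{Ishii1995} and related references for this step), then apply \Cref{FrankLieb} with $\theta \equiv \epsilon$.

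There is, however, a slip in your touching direction. To verify that $v$ is a viscosity subsolution of $-\Delta v \le 0$, you must take $\psi$ touching $v$ from \emph{above} at $x_{0}$, not from below: then $\varphi \vcentcolon= \psi + \tfrac{\epsilon}{2N}\abs{\cdot}^{2}$ touches $u$ from above, and \eqref{eqViscosity} (which, as the paper recalls, tests against functions $\varphi \ge u$) yields $\Delta\varphi(x_{0}) \ge \epsilon$, hence $\Delta\psi(x_{0}) \ge 0$. With ``below'' the hypothesis gives nothing. Also note that your mollification sketch presupposes the sub-mean-value inequality for $v$, which is exactly what needs to be established first; the comparison argument with the harmonic extension on balls is the step that actually does the work, after which mollification (or a direct Riesz-type argument) gives $\Delta v \ge 0$ as a distribution.
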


\begin{proof}
We recall that a continuous function $u$ satisfies \eqref{eqViscosity} whenever one has 
\(\Delta \varphi(x) \geq \epsilon\) for every $x \in \Omega$ and every $\varphi \in C^2(\Omega)$ such that $\varphi \ge u$ in a neighborhood of $x$ and \(\varphi(x) = u(x)\).
By the relation between viscosity and distributional solutions, see \cite{Ishii1995}*{Theorem~1} and also \cites{Lions1983,JuutinenLindqvistManfredi2001}, we have
\[{}
\Delta u \ge \epsilon{}
\quad
\text{in the sense of distributions in \(\Omega\).}
\] 
This implies that \(\Delta u \) is a locally finite Borel measure in $\Omega$ with \(\Delta u\geq\epsilon \dif x\),
so that \Cref{FrankLieb} with $\theta=\epsilon$ gives the result.
\end{proof}

As Frank and Lieb point out in their paper, the conclusion of \Cref{FrankLieb2} is false under the assumption $\Delta u \geq 0$, without strict inequality, as shown by the example $u(x)=\max{\{x_1,0\}}$ in \(\R^{N}\). 

\subsection{Limiting vorticities of the Ginzburg-Landau system}
The Ginzburg-Laudau model can be used to describe the phenomenon of 
superconductivity in some materials at low temperature subject to a magnetic field; see \cite{SandierSerfaty}.
The state of a superconducting sample in a domain \(\Omega \subset \R^{2}\) is then described by an order parameter $u: \Omega \rightarrow \mathbb{C}$ and a magnetic potential $A : \Omega \rightarrow \R$ that are local minimizers or merely critical points of the energy functional
\begin{equation}\label{eq:GLenergy}
G_\epsilon(u, A) = \frac{1}{2} \int_{\Omega}{|\nabla^{A} u|^2} + \int_{\Omega}{(h-h_{\text{ex}})^2} + \frac{1}{4\epsilon^2} \int_{\Omega}{(1-|u|^2)^2}.
\end{equation}
Here, $\epsilon > 0$ is a small parameter, the constant $h_{\text{ext}} > 0$ is the intensity of the applied magnetic field, 
\[{}
h \vcentcolon= \curl{A} = -\Div{A^{\perp}}
\]
is the induced magnetic field and 
\[{}
\nabla^{A} u \vcentcolon= \nabla u - i A u
\]
is the covariant gradient.
Regions where the order parameter \(u\) satisfies \(|u| \approx 1\) are in superconductor phase, while the material behaves as a normal conductor in places where \(|u| \approx 0\).
The Euler-Lagrange equation satisfied by a critical point \((u_{\epsilon}, A_{\epsilon})\) of  \eqref{eq:GLenergy} gives (see Proposition~3.6 in \cite{SandierSerfaty})
\begin{equation}\label{eq:2ndGLequation}
-\nabla^\perp h_{\epsilon} = (i u_{\epsilon} | \nabla^{A} u_{\epsilon})
\quad 
\text{in the sense of distributions in \(\Omega\),}
\end{equation}
where 
\[{}
\nabla^\perp h_{\epsilon}
\vcentcolon = \Bigl(-\frac{\partial h_{\epsilon}}{\partial x_{2}}, \frac{\partial h_{\epsilon}}{\partial x_{1}} \Bigr)
\]
and 
\[{}
(i u_{\epsilon} | \nabla^{A_{\epsilon}} u_{\epsilon})
\vcentcolon = \frac{i u_{\epsilon} \, \overline{\nabla^{A_{\epsilon}} u_{\epsilon}} + \overline{i u_{\epsilon}} \, \nabla^{A_{\epsilon}}u_{\epsilon}}{2} \in \R^{2}
\]
is the superconductivity current.
By computing the curl on both sides of \eqref{eq:2ndGLequation}, one obtains
\begin{equation}\label{eq:London}
-\Delta h_\epsilon + h_\epsilon = \mu_\epsilon{}
\quad 
\text{in the sense of distributions in \(\Omega\),}
\end{equation}
where the intrinsic vorticity \(\mu_{\epsilon}\) associated to \((u_{\epsilon}, A_{\epsilon})\) is given by
\[{}
\mu_{\epsilon} \vcentcolon= \curl{(iu_\epsilon | \nabla^{A_{\epsilon}} u_\epsilon)} + \curl{A_{\epsilon}}.
\]
This quantity is an analogue of the distributional Jacobian that is invariant under the gauge transformation
\[{}
(u, A) \longmapsto (u \e^{i f}, A + \nabla f), 
\] 
for any smooth function \(f\).

Under certain conditions in the regime where \(\epsilon \to 0\), a suitable renormalization of \(h_{\epsilon}\) and \(\mu_{\epsilon}\) yields a nontrivial solution of the equation
\begin{equation}
	\label{eqVorticity}
-\Delta h +h=\mu{}
\quad 
\text{in the sense of distributions in \(\Omega\)}
\end{equation}
where $h \in W\loc^{1, 2}(\Omega)$ is called the limiting induced magnetic field and the locally finite Borel measure \(\mu\) is the limiting vorticity.
The limiting magnetic field \(h\) satisfies in addition
\begin{equation}
	\label{eqVorticityBis}
\Div{T_h}=0
\quad 
\text{in the sense of distributions in \(\Omega\),}
\end{equation}
where \(\Div{T_h} = (\Div{T_{h, 1}}, \Div{T_{h, 2}})\) and \(T_{h, i} : \Omega \to \R^{2}\) is defined by
 \begin{equation*}
 T_{h, i, j}
 \vcentcolon= -\frac{\partial h}{\partial x_i}\frac{\partial h}{\partial x_j} + \frac{1}{2}(|\nabla h|^2+h^2)\delta_{ij} \quad \text{for } i, j = 1, 2.
 \end{equation*}

The equation \(\Div{T_h} = 0\) means that $h$ is a critical point of the functional 
\[{}
v \in W^{1, 2}(\Omega) \longmapsto \int_{\Omega}{(|\nabla v|^2 + v^2 )}
\]
with respect to {inner variations of the domain}, i.e.\@ variations of the form $v_t(x) = v(x+tX(x))$ around \(t = 0\) for every vector field $X \in C^\infty_c(\Omega; \R^2)$. 
When \(h\) is a smooth function in \(\Omega\), one finds that
\begin{equation}
	\label{eqDivergenceSmooth}
\Div{T_h} = 
(-\Delta h + h) \nabla h.
\end{equation}
For $h$ that merely belongs to $W^{1, 2}\loc(\Omega)$, \(T_{h}\) belongs to \(L^{1}\loc(\Omega; \R^{2} \times \R^{2})\).{}
In this case, $\Div{T_h}$ is well-defined in the sense of distributions, but the distributional meaning of the right-hand side in \eqref{eqDivergenceSmooth} becomes unclear.

As an application of Theorem~\ref{th:main1}, we identify the absolutely continuous part of the limiting vorticity \(\mu\), in connection with \eqref{eqDivergenceSmooth}, as follows:

\begin{proposition}
	\label{propositionVorticity}
	Let \(\Omega \subset \R^{2}\) be an open set and let \(\mu\) be a locally finite Borel measure in \(\Omega\).
	If \(h \in W\loc^{1, 2}(\Omega)\) satisfies \eqref{eqVorticity} and \eqref{eqVorticityBis},
	then its precise representative \(\Lebesgue{h}\) is locally Lipschitz-continuous in \(\Omega\) and 
	\[{}
	\mu\la = h\chi_{\{ \nabla h=0\}} \dif x.
	\]
\end{proposition}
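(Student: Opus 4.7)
My plan combines the regularity theory of Caffarelli--Salazar~\cite{CaffarelliSalazar:2002} with \Cref{th:main1} and the stress--energy identity $\Div T_h = 0$. First I would invoke \cite{CaffarelliSalazar:2002} to get that $\tilde h$ is locally Lipschitz in $\Omega$. Once this is known, $\tilde h\in L^\infty\loc(\Omega)$ and $-\Delta h=\mu-h$ is a locally finite Borel measure, so \Cref{th:main1} applies to $h$ and yields $(\Delta h)\la=\Trace(\apDD{h})\dif x$. Combined with the Poisson equation $-\Delta h+h=\mu$ this gives
\begin{equation*}
\mu\la=\bigl(h-\Trace(\apDD{h})\bigr)\dif x.
\end{equation*}

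On $\{\nabla h=0\}$, property~\eqref{eqDensityPoint_ter} applied to $v=\nabla h$ gives $\apDD{h}=\apD{(\nabla h)}=0$ almost everywhere, so the display above reduces to $\mu\la=h\dif x$ there. It remains to show that $\mu\la=0$ almost everywhere on $\{\nabla h\ne 0\}$; the two together yield the desired $\mu\la=h\chi_{\{\nabla h=0\}}\dif x$.

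For this remaining step I would exploit the inner-stationarity $\Div T_h=0$, whose formal expansion for smooth $h$ gives $\Div T_h=(h-\Delta h)\nabla h=\mu\nabla h$. To promote this to a rigorous vector-valued identity $\mu\nabla h=0$ in the Lipschitz setting, I would mollify $h_\varepsilon\vcentcolon=\rho_\varepsilon*h$, observe that $\Div T_{h_\varepsilon}=(h_\varepsilon-\Delta h_\varepsilon)\nabla h_\varepsilon$ holds classically, test against an arbitrary $X\in C^\infty_c(\Omega;\R^{2})$, and pass to the limit: the left-hand side tends to $-\int T_h:\nabla X\dif x=0$ by strong $L^{p}\loc$ convergence of $T_{h_\varepsilon}$, while the right-hand side tends to $\int X\cdot\nabla h\dif\mu$ by narrow convergence of $\rho_\varepsilon*\mu$ paired with the convergence of $\nabla h_\varepsilon$. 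Arbitrariness of $X$ then gives $\mu\nabla h=0$ as a vector-valued measure, and taking absolutely continuous parts forces $\mu\la\,\nabla h=0$ almost everywhere, i.e.~$\mu\la=0$ on $\{\nabla h\ne 0\}$.

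The main obstacle is the last limit passage: the sequence $\Delta h_\varepsilon$ converges only narrowly to the measure $\Delta h$, while $\nabla h_\varepsilon\to \nabla h$ holds a priori only in $L^{p}\loc$, so pairing them cleanly against $X$ calls for either a continuity property of $\nabla h$ that should come as part of the regularity statement in~\cite{CaffarelliSalazar:2002}, or a compensated-compactness argument exploiting the curl-free structure of $\nabla h$.
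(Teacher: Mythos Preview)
Your overall strategy---mollify, use $\Div T_{h_\varepsilon}=(h_\varepsilon-\Delta h_\varepsilon)\nabla h_\varepsilon$, pass to the limit, then invoke \Cref{th:main1} on $\{\nabla h=0\}$---is exactly the paper's. But there are two genuine gaps.

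\medskip
\textbf{Lipschitz regularity.} Your first step is circular. The results of \cite{CaffarelliSalazar:2002} concern viscosity solutions of $\Delta u=u$ on $\{\nabla u\neq 0\}$; to put $h$ into that framework via \Cref{prop:viscosity} you would already need $\mu\la=h\chi_{\{\nabla h=0\}}$ (and indeed $\mu\ls=0$), which is precisely what you are proving. The paper obtains Lipschitz continuity \emph{directly from $\Div T_h=0$ alone}: writing the components of $T_h$ via the vector field $X_h$ in \eqref{eqVectorXh}--\eqref{eqComponentsT}, the condition $\Div T_h=0$ becomes a div--curl system for $X_h$ with right-hand side in $L^2\loc$; elliptic $L^p$ estimates plus the algebraic identity $|\nabla h|^4=4|X_h|^2$ bootstrap $\nabla h$ into $L^\infty\loc$. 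Only later, in \Cref{corollaryVorticity} where $\mu\in L^1\loc$, does Caffarelli--Salazar enter.

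\medskip
\textbf{The limit passage.} You correctly flag the difficulty, but your proposed fixes (continuity of $\nabla h$, compensated compactness) are neither available nor needed. The paper's resolution is: once Lipschitz continuity is in hand, $(\nabla h_\varepsilon)$ is \emph{uniformly bounded} on compact sets. Now split $\mu_\varepsilon=\rho_\varepsilon*\mu\la+\rho_\varepsilon*\mu\ls$. The first term converges to the density $f$ of $\mu\la$ in $L^1\loc$, so $(\rho_\varepsilon*\mu\la)\nabla h_\varepsilon\to f\nabla h$ strongly in $L^1\loc$ by dominated convergence. The second term satisfies $|(\rho_\varepsilon*\mu\ls)\nabla h_\varepsilon|\le C\,\rho_\varepsilon*|\mu\ls|$, so any weak-$*$ subsequential limit $\gamma$ is dominated by $|\mu\ls|$ and hence \emph{singular}. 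The identity $f\nabla h\,\dif x+\gamma=0$ then forces $f\nabla h=0$ almost everywhere (absolutely continuous and singular parts must vanish separately). Note that the paper never asserts the limit is ``$\mu\nabla h$'' as a measure---that product is not well defined on the singular part, and your claimed convergence to $\int X\cdot\nabla h\,\dif\mu$ would require exactly the continuity of $\nabla h$ that you do not have.
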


When \(\mu \in L\loc^{1}(\Omega)\), one thus has
\begin{equation}
\label{eqMuL1}
\mu = h\chi_{\{ \nabla h=0\}}
\quad \text{almost everywhere in \(\Omega\).}
\end{equation}
Such a representation of \(\mu\) was only known for \(\mu \in L\loc^{p}(\Omega)\) with \(p > 1\); see \cite{SandierSerfaty}*{Theorem~13.1}.
As a consequence of \eqref{eqMuL1}, \(h\) satisfies the Schrödinger equation
\[{}
-\Delta h + Vh = 0
\quad \text{in the sense of distributions in \(\Omega\),}
\]
where the potential \(V \vcentcolon= \chi_{\{\nabla h \ne 0\}}\) takes its values in \(\{0, 1\}\). 
A description of the singular part of the limiting vorticity when $\mu$ is merely a measure has been investigated in the paper~\cite{Rodiac2018} by the third author.

The Lipschitz continuity of \(h\) was established by Sandier and Serfaty in \cite[pp.~278--279]{SandierSerfaty}, including the case where \(\mu\) is a measure.
We focus on the new property concerning the identification of \(\mu\la\)\,, but we also present a sketch of their regularity result that is needed in our proof. 

\resetconstant
\begin{proof}[Proof of \Cref{propositionVorticity}]
Using the vector field
\begin{equation}
\label{eqVectorXh}
X_{h} 
\vcentcolon= \frac{1}{2} \biggl( \Bigl(\frac{\partial h}{\partial x_{2}} \Bigr)^{2} - \Bigl(\frac{\partial h}{\partial x_{1}} \Bigr)^{2}, -2 \frac{\partial h}{\partial x_{1}}\frac{\partial h}{\partial x_{2}}  \biggr),
\end{equation}
one can write the components \(T_{h, 1}\) and \(T_{h, 2}\) of \(T\) as
\begin{equation}
	\label{eqComponentsT}
T_{h, 1}
= X_{h} + \frac{1}{2} (h^{2}, 0)
\quad \text{and} \quad{}
T_{h, 2}
= -X_{h}^{\perp} + \frac{1}{2} (0, h^{2}).
\end{equation}
In addition, 
\begin{equation}
	\label{eqNormX}
\abs{\nabla h}^{4} = 4\abs{X_{h}}^{2}.
\end{equation}
By \eqref{eqComponentsT} and the assumption on \(\Div{T_{h}}\), we have that \(X_{h}\) satisfies the div-curl system
\begin{equation*}
\left\{
\begin{aligned}
	\Div{X_{h}} & = - h \, \frac{\partial h}{\partial x_{1}},\\
	\Curl{X_{h}} & =  - h \, \frac{\partial h}{\partial x_{2}},
\end{aligned}
\right.
\end{equation*}
where \(\Curl{X_{h}} = - \Div{X_{h}^{\perp}}\).
Since $h \in W^{1,2}_{\text{loc}}(\Omega)\), one can apply elliptic $L^p$~estimates and a bootstrap argument based on \eqref{eqNormX} to deduce that \(X_{h} \in W^{1, p}\loc(\Omega; \R^{2})\) for every \(1 < p < \infty\).{}
Hence, \(\abs{\nabla h} \in L^{\infty}\loc(\Omega)\) and then $\Lebesgue{h}$ is locally Lipschitz-continuous in \(\Omega\).

We now identify \(\mu\la\) in terms of \(h\).
To this end, take open subsets \(\omega \Subset O \Subset \Omega\) and a sequence of mollifiers  \((\rho_{n})_{n \in \N}\) in \(C_{c}^{\infty}(\R^{2})\) such that \(\omega - \supp{\rho_{n}} \subset O\) for every \(n \in \N\).{}
Denoting $h_n = \rho_n * h$ and \(\mu_{n} = \rho_{n} * \mu\), by linearity of the equation in \eqref{eqVorticity} we then have
\[{}
-\Delta h_{n} + h_{n} = \mu_{n}
\quad
\text{in \(\omega\).}
\]
In this case, \eqref{eqDivergenceSmooth} can be written as
\[{}
\Div{(T_{h_{n}})} = \mu_{n} \nabla h_{n}
\quad
\text{in \(\omega\).}
\]
Since \((h_{n})_{n \in \N}\) converges to \(h\) in \(W^{1, 2}(\omega)\), the sequence \((T_{h_{n}})_{n \in \N}\) converges to \(T_{h}\) in \(L^{1}(\omega; \R^{2} \times \R^{2})\).{}
We thus have
\begin{equation}
	\label{eqLimitDistributions}
\mu_{n} \nabla h_{n} = \Div T_{h_{n}} \overset{*}{\rightharpoonup} \Div T_h = 0
\quad \text{in the sense of distributions in \(\omega\).}
\end{equation}

To give an alternative identification the limit of the sequence \((\mu_{n} \nabla h_{n})_{n \in \N}\) in terms of \(h\) and \(\mu\), we write
\begin{equation*}
\mu_{n} = \rho_{n} * (\mu\la) + \rho_{n} * (\mu\ls).
\end{equation*}
The sequence \((\rho_{n} * (\mu\la))_{n \in \N}\) converges to \(f\) in \(L^{1}(\omega)\), where \(f\) is the density of \(\mu\la\) with respect to the Lebesgue measure, i.e.~\(
\mu\la = f \dif x
\). 
From the Lipschitz-regularity part of the proof, the sequence \((\nabla h_{n})_{n \in \N}\) is uniformly bounded in \(\omega\) and converges to \(\nabla h\) in \(L^{1}(\omega)\).{}
Passing to a subsequence if necessary, we may assume that \((\nabla h_{n})_{n \in \N}\) converges almost everywhere to \(\nabla h\) in \(\omega\). We then write 
\begin{equation}
\rho_n * (\mu\la) \nabla h_n= (\rho_n *(\mu\la)-f)\nabla h_n +f\nabla h_n.
\end{equation}
Since \((\nabla h_n)_{n \in \N}\) is uniformly bounded, the Dominated convergence theorem thus implies that
\begin{equation*}
\rho_{n} * (\mu\la) \, \nabla h_{n} \to f\, \nabla h 
\quad \text{in \(L^{1}(\omega)\).}
\end{equation*}
By uniform boundedness of the sequence \((\nabla h_{n})_{n \in \N}\) in \(\omega\),
\begin{equation}
\label{eqMeasureSingular}
\abs{\rho_{n} * (\mu\ls) \, \nabla h_{n}}
\le \Cl{cte-1924}  \,\rho_{n} * \abs{\mu\ls}.
\end{equation}
Passing to a subsequence if necessary, we may assume that \((\rho_{n} * (\mu\ls) \, \nabla h_{n})_{n \in \N}\) converges weak\(^{*}\) in \((C_{0}(\overline{\omega}))'\) to a finite measure \(\gamma\).{}
We thus have
\[{}
\mu_{n} \nabla h_{n} \overset{*}{\rightharpoonup} f \, \nabla h \dif x + \gamma
\quad \text{weakly$^*$ as measures in \(\omega\).}
\]

By uniqueness of the limit in \eqref{eqLimitDistributions}, we conclude that
\begin{equation}
	\label{eqZero}
	f \, \nabla h \dif x + \gamma = 0
	\quad \text{in the sense of distributions in \(\omega\)}.
\end{equation}
This identity also holds in the sense of measures in \(\omega\); see for instance
 \cite{Ponce2016}*{Proposition~6.12}. 
 By \eqref{eqMeasureSingular}, \(\gamma\) satisfies
\[{}
\abs{\gamma}
\le \Cr{cte-1924} \abs{\mu\ls}
\quad \text{in \(\omega\),}
\]
and in particular is singular with respect to the Lebesgue measure.
We deduce from \eqref{eqZero} that \(f \, \nabla h \dif x = 0 \) and \(\gamma = 0\).{}
Thus,
\[{}
f = 0
\quad \text{almost everywhere in \(\{\nabla h \neq 0\} \cap \omega\).}
\]
On the other hand, by \Cref{th:main1} and \eqref{eqVorticity},{}
\[{}
f = h
\quad \text{almost everywhere in \(\{\nabla h = 0\}\).}
\]
Hence, \(f = h \chi_{\{\nabla h = 0\}}\) almost everywhere in \(\omega\).{}
Since \(\omega \Subset \Omega\) is an arbitrary open subset, the conclusion follows.
\end{proof}

We deduce from \Cref{propositionVorticity} that a limiting vorticity \(\mu \in L^{1}\loc(\Omega)\) is fully described by at most countably many open sets that confine clouds of vortices where the limiting induced magnetic field \(h\) is constant:

\begin{corollary}
\label{corollaryVorticity}
	Let \(\Omega \subset \R^{2}\) be an open set.
	If \(h \in W\loc^{1, 2}(\Omega)\) satisfies \eqref{eqVorticity} and \eqref{eqVorticityBis} with \(\mu \in L^{1}\loc(\Omega)\), then
	\[{}
	\mu = \sum_{j \in J}{m_{j} \chi_{U_{j}}}
	\quad \text{almost everywhere in \(\Omega\),}
	\]
	where \((m_{j})_{j \in \N}\) is the collection of values in \(\R \setminus \{0\}\) such that the level sets \(\{\Lebesgue{h} = m_{j}\}\) have a non-empty interior \(U_{j}\) for every \(j \in J\).
\end{corollary}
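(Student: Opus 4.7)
The plan is to combine \Cref{propositionVorticity} with a careful analysis of the level sets of \(\tilde{h}\), using the Caffarelli-Salazar regularity theory to upgrade measure-theoretic statements about level sets to open-set statements.

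First, I would apply \Cref{propositionVorticity} to obtain that \(\tilde{h}\) is locally Lipschitz-continuous in \(\Omega\) and, thanks to the assumption \(\mu \in L^{1}\loc(\Omega)\) (so that \(\mu\la = \mu\)),
\[
\mu = \tilde{h} \chi_{\{\nabla h = 0\}} \quad \text{almost everywhere in \(\Omega\).}
\]
Denote \(\Sigma \vcentcolon= \{\nabla h = 0\}\). My next step is to decompose \(\Sigma\) according to the values taken by \(\tilde{h}\): the pushforward of the Lebesgue measure restricted to \(\Sigma\) by \(\tilde{h}\) is a \(\sigma\)-finite Borel measure on \(\R\), and hence has at most countably many atoms. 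Let \((\alpha_{j})_{j \in J}\) be the atoms contained in \(\R \setminus \{0\}\). For any \(\alpha \in \R \setminus (\{0\} \cup \{\alpha_{j}\}_{j \in J})\) the set \(\{\tilde{h} = \alpha\} \cap \Sigma\) has Lebesgue measure zero, while on \(\{\tilde{h} = 0\}\) the representation above gives \(\mu = 0\). Consequently,
\[
\mu = \sum_{j \in J} \alpha_{j} \chi_{\{\tilde{h} = \alpha_{j}\} \cap \Sigma} \quad \text{almost everywhere in \(\Omega\).}
\]

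The crucial remaining step is to identify each set \(\{\tilde{h} = \alpha_{j}\} \cap \Sigma\) (up to a Lebesgue-negligible set) with the interior \(U_{j} \vcentcolon= \Int{\{\tilde{h} = \alpha_{j}\}}\). This is exactly the content of the free-boundary regularity results of Caffarelli and Salazar~\cite{CaffarelliSalazar:2002}: for an equation of the form \(-\Delta \tilde{h} + \tilde{h} = \tilde{h} \chi_{\{\nabla h = 0\}}\), the coincidence set \(\{\tilde{h} = \alpha_{j}\}\), whenever \(\alpha_{j} \ne 0\) and this set has positive Lebesgue measure, essentially coincides with its interior, i.e.\@ \(\bigabs{\{\tilde{h} = \alpha_{j}\} \setminus U_{j}} = 0\) and \(U_{j} \ne \emptyset\). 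Conversely, any value \(\alpha \ne 0\) for which \(\Int{\{\tilde{h} = \alpha\}}\) is non-empty automatically contributes a positive mass to the pushforward measure above, hence is one of the atoms \(\alpha_{j}\). Thus the collection \((\alpha_{j})_{j \in J}\) coincides with the collection \((m_{j})_{j \in J}\) from the statement.

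Finally, on each open set \(U_{j}\) one has \(\tilde{h} \equiv m_{j}\), so \(\nabla h = 0\) almost everywhere in \(U_{j}\); combining with the previous step,
\[
\{\tilde{h} = m_{j}\} \cap \Sigma = U_{j} \quad \text{up to a Lebesgue-negligible set,}
\]
and substitution yields \(\mu = \sum_{j \in J} m_{j} \chi_{U_{j}}\) almost everywhere in \(\Omega\). The disjointness of the family \((U_{j})_{j \in J}\) is automatic from the disjointness of the level sets of \(\tilde{h}\). The main obstacle, and the one requiring the external input \cite{CaffarelliSalazar:2002}, is precisely the upgrade from "the level set \(\{\tilde{h} = m_{j}\}\) has positive measure" to "it coincides with a non-empty open set up to a null set"; everything else is a bookkeeping consequence of \Cref{propositionVorticity} and \(\sigma\)-finiteness.
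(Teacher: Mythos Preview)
Your pushforward argument has a genuine gap. Knowing that each individual slice $\{\tilde{h}=\alpha\}\cap\Sigma$ is Lebesgue-null for every non-atomic $\alpha$ does \emph{not} let you conclude $\mu=\sum_{j}\alpha_{j}\chi_{\{\tilde{h}=\alpha_{j}\}\cap\Sigma}$, because there are uncountably many such $\alpha$ and their union may well have positive measure. In other words, the pushforward $\tilde{h}_{*}(\mathcal{L}^{2}\lfloor_{\Sigma})$ could a priori have a nontrivial continuous part on $\R\setminus\{0\}$, and on the corresponding subset of $\Sigma$ one has $\mu=\tilde{h}\neq 0$, so your displayed identity fails there. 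What you really need is that $\tilde{h}$ takes only countably many nonzero values on $\Sigma$ up to a null set; this is not bookkeeping, it is precisely the substance that requires the Caffarelli--Salazar input, and your later invocation of \cite{CaffarelliSalazar:2002} (phrased as ``level set coincides with its interior'') does not retroactively fill this hole.

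The paper avoids the detour through atoms altogether. After upgrading to $\tilde{h}\in C^{1}(\Omega)$ via elliptic regularity, it decomposes the closed set $G=\{\nabla\tilde{h}=0\}$ as $\Int G\cup\partial G$; on $\Int G$ the function $\tilde{h}$ is locally constant, so only countably many values $m_{j}$ occur and $\mu=\tilde{h}\chi_{\Int G}=\sum_{j}m_{j}\chi_{U_{j}}$ there automatically. The remaining task is to show $\partial G\cap\{\tilde{h}\neq 0\}$ is negligible, and this is what \cite{CaffarelliSalazar:2002} actually provides: the free boundary $\partial\{\nabla\tilde{h}\neq 0\}$ has finite $\mathcal{H}^{1}$ measure inside $\{\tilde{h}>0\}$ (and similarly for $\{\tilde{h}<0\}$). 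Note that this is a statement about $\partial G$, not about level sets of $\tilde{h}$ per se, and that applying it requires first checking (as the paper does in \Cref{prop:viscosity}) that the distributional equation makes $\tilde{h}$ a viscosity solution in the Caffarelli--Salazar sense---a bridge you also omit.
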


Before proving the corollary, we first recall the meaning used by Caffarelli and Salazar in \cite{CaffarelliSalazar:2002} of a viscosity solution \(u: \Omega \to \R\) of the equation
\begin{equation}
	\label{eqViscositySchrodinger}
\Delta u = u 
\quad \text{in \(\{\nabla u \ne 0\}\).}
\end{equation}

\begin{definition}\label{def:viscosity_sense_improved}
A continuous function \(u : \Omega \to \R\) satisfies \eqref{eqViscositySchrodinger} in the viscosity sense whenever both properties hold:
\begin{enumerate}[(i)]
	\item{}
	\label{item-2479}
	for every \(x \in \Omega\) and every polynomial \(P\) of degree at most \(2\) such that \(P \ge u\) in a neighborhood of \(x\) with \(P(x) = u(x)\) and \(\nabla P(x) \ne 0\), 
	\[{}
	\Delta P(x) \ge u(x),{}
	\]
	\item{}
	\label{item-2481} 
	for every \(x \in \Omega\) and every polynomial \(P\) of degree at most \(2\) such that \(P \le u\) in a neighborhood of \(x\) with \(P(x) = u(x)\) and \(\nabla P(x) \ne 0\), 
	\[{}
	\Delta P(x) \le u(x).
	\]
\end{enumerate}
\end{definition}

We then apply the regularity theory developed in \cite{CaffarelliSalazar:2002} to prove the decomposition of the limiting vorticity \(\mu\).{}
To this end, we need the following lemma that clarifies the connection between \eqref{eqViscositySchrodinger} and the equation satisfied by the limiting induced magnetic field.

\begin{lemma}\label{prop:viscosity}
If \(u \in (W^{1, 1}\loc \cap C^{0})(\Omega)\) is such that 
\[{}
-\Delta u + \chi_{\{\nabla u \ne 0\}} u = 0
\quad \text{in the sense of distributions in \(\Omega\),}
\]
then \(u : \Omega \to \R\) is a viscosity solution of \eqref{eqViscositySchrodinger} in the sense of \Cref{def:viscosity_sense_improved}.
\end{lemma}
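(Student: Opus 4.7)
The plan is to bootstrap the regularity of $u$ from the equation so that $u$ becomes classical on the open set $A := \{\nabla u \neq 0\}$, and then to observe that in \Cref{def:viscosity_sense_improved} the touching point $x$ necessarily lies in $A$, which reduces the verification to the textbook Hessian comparison.

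First, I would upgrade the regularity of $u$. Since $u$ is continuous in $\Omega$, the right-hand side $\chi_{\{\nabla u \neq 0\}}\, u$ is locally bounded, so the distributional equation exhibits $\Delta u$ as an $L^{\infty}\loc(\Omega)$ function. Standard Calderón-Zygmund elliptic regularity (for instance via the Newtonian-potential representation developed in \Cref{5}) then gives $u \in W^{2,p}\loc(\Omega)$ for every finite $p$, and hence $u \in C^{1,\alpha}\loc(\Omega)$ for every $\alpha < 1$ by Sobolev embedding. Identifying $\nabla u$ with its continuous representative, the set $A$ is open, and on $A$ the equation becomes $\Delta u = u$ in the distributional sense. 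A routine Schauder bootstrap then yields $u \in C^{\infty}(A)$.

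Now fix $x \in \Omega$ and a polynomial $P$ of degree at most two with $P \geq u$ in a neighborhood of $x$, $P(x) = u(x)$, and $\nabla P(x) \neq 0$. The function $w := P - u$ is $C^{1}$ near $x$ with $w \geq 0$ and $w(x) = 0$, so $\nabla w(x) = 0$, forcing $\nabla u(x) = \nabla P(x) \neq 0$ and hence $x \in A$. On a neighborhood of $x$ contained in $A$, both $P$ and $u$ are $C^{2}$, so $w$ attains a classical local minimum at $x$ and $D^{2} w(x)$ is positive semidefinite. Taking traces,
\[
\Delta P(x) - \Delta u(x) = \Delta w(x) \geq 0,
\]
and combined with $\Delta u(x) = u(x)$ this gives condition (i) of \Cref{def:viscosity_sense_improved}. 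Condition (ii) follows from the same argument with $-w$ in place of $w$: $P \leq u$ near $x$ again forces $x \in A$, $w$ has a local maximum at $x$ with $D^{2} w(x)$ negative semidefinite, and the reverse inequality $\Delta P(x) \leq u(x)$ follows.

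The only delicate step is the initial regularity upgrade, because the right-hand side of the equation involves $\chi_{\{\nabla u \neq 0\}}$, which depends on the distributional gradient of $u$ and is defined only up to Lebesgue-null sets. This ambiguity is harmless: as a Borel function pointwise bounded by $|u|$, the product $\chi_{\{\nabla u \neq 0\}}\, u$ unambiguously belongs to $L^{\infty}\loc(\Omega)$, so the $W^{2,p}$ theory applies at the first step and everything downstream is classical.
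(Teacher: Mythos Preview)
Your proof is correct and follows essentially the same route as the paper: bootstrap $\Delta u\in L^\infty\loc$ to $u\in W^{2,p}\loc\hookrightarrow C^1$, use first-order optimality at the touching point to force $\nabla u(x)=\nabla P(x)\neq 0$, upgrade to $C^\infty$ on the open set $\{\nabla u\neq 0\}$ via the equation $\Delta u=u$, and conclude with the Hessian comparison. The only cosmetic difference is that you name the open set $A$ in advance and comment on the a.e.\ ambiguity of $\chi_{\{\nabla u\neq 0\}}$, neither of which changes the argument.
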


\begin{proof}[Proof of \Cref{prop:viscosity}]
Since \(\Delta u \in L^{\infty}\loc(\Omega)\), by elliptic regularity theory we have \(u \in W^{2, p}\loc(\Omega)\) for every \(1 < p < \infty\) and then
\(u \in C^{1}(\Omega)\).{}
To prove that \eqref{item-2479} in \Cref{def:viscosity_sense_improved} is satisfied, take a polynomial \(P\) of degree at most \(2\) such that \(P \ge u\) in a neighborhood of \(x \in \Omega\), with \(P(x) = u(x)\) and \(\nabla P(x)\neq 0\).{}
By differentiability of \(u\), we then have 
\[{}
\nabla u(x) = \nabla P(x) \ne 0.
\]
Since \(\nabla u\) is continuous, there exists \(r > 0\) such that \(\nabla u \ne 0\) in \(B_{r}(x)\).{}
It then follows from the equation satisfied by \(u\) and elliptic regularity theory that \(u\) is smooth in \(B_{r}(x)\) and
\[{}
\Delta u = u
\quad \text{in \(B_{r}(x)\).}
\] 
By local minimality of \(P - u\) at \(x\), we then have 
\[{}
\Delta P(x) 
\ge \Delta u(x)
= u(x).
\]
Hence, \(u\) satisfies the first condition in \Cref{def:viscosity_sense_improved}.
The second one is proved in a similar way.
\end{proof}

\begin{proof}[Proof of \Cref{corollaryVorticity}]
The precise representative \(\Lebesgue h\) satisfies \(\Lebesgue{h} = h\) and \(\nabla\Lebesgue{h} = \nabla h\) almost everywhere in \(\Omega\).{}
Since \(\mu \in L^{1}\loc(\Omega)\), by \Cref{propositionVorticity} we then have
\begin{equation}\label{eqMuL1Precise}	
	\mu= \Lebesgue{h} \chi_{\{\nabla \Lebesgue{h}=0 \}}
	\quad \text{almost everywhere in \(\Omega\).}
	\end{equation}
By the equation satisfied by \(h\), we get
\[{}
-\Delta \Lebesgue h + \chi_{\{\nabla \Lebesgue{h} \ne 0\}} \Lebesgue{h} = 0
\quad \text{in the sense of distributions in \(\Omega\).}
\]	
	From elliptic regularity theory, \(\Lebesgue{h}\) thus belongs to \(C^1(\Omega)\). 

	We now decompose the relatively closed set \(G \vcentcolon= \{\nabla \Lebesgue{h} = 0\}\) as a disjoint union:
	\begin{equation}
	\label{eqZeroSetg}
	G
	= \Int{G} \cup \bigl(\partial G \cap \{\Lebesgue{h} = 0\}\bigr) \cup \bigl(\partial G \cap \{\Lebesgue{h} \neq 0\}\bigr),
	\end{equation}
	where the boundary \(\partial\) is computed with respect to the relative topology in \(\Omega\), and so yields a subset of \(\Omega\).{}
	The open set \(\Int{G}\) is a finite or countably infinite 
	union of open connected components \((U_{j})_{j \in J}\). 
	Since  \(\nabla \Lebesgue{h} = 
	0\) in each \(U_j\), then \(\Lebesgue{h}\) is a constant \(m_{j} \in \R\) in \(U_{j}\).{}
	
	By \Cref{prop:viscosity}, \(\Lebesgue{h} : \Omega \to \R\) is a viscosity solution of \eqref{eqViscositySchrodinger} in the sense of \Cref{def:viscosity_sense_improved}. 
	One then has by \cite{CaffarelliSalazar:2002}*{Lemma~9}, see also \cite{CaffarelliSalazarShahgholian:2004}, that \(\Lebesgue{h}\) is locally a \(C^{1, 1}\) function in the open set \(\{\Lebesgue{h} > 0\}\) where \(\Lebesgue{h}\) is positive.
	As the boundary  \(\partial\) is computed in the relative topology in \(\Omega\) and \(\nabla\Lebesgue{h}\) is continuous, we have \(\partial G = \partial\{\nabla \Lebesgue{h} \ne 0\}\).{}
	We can then apply \cite{CaffarelliSalazar:2002}*{Corollary~14} to deduce that the free boundary
	\[{}
	B_{+} \vcentcolon= \partial G \cap \{\Lebesgue{h} > 0\}
	\]
	has finite Hausdorff measure \(\cH^{N - 1}\).{}
	In particular, \(B_{+}\) is negligible with respect to the Lebesgue measure.
	The same argument applied to \(- \Lebesgue{h}\) in \(\{\Lebesgue{h} < 0\}\) implies that 
	\[{}
	B_{-} \vcentcolon= \partial G \cap \{\Lebesgue{h} < 0\}
	\]
	is also negligible and then so is \(B_{+} \cup B_{-}\).
	We now deduce from \eqref{eqMuL1Precise} and \eqref{eqZeroSetg} that
	\[{}
	\mu 
	=  \Lebesgue{h} \chi_{\Int{G}}
	= \sum_{j \in J}{m_{j} \chi_{U_{j}}} \quad \text{almost everywhere in \(\Omega\).}
	\qedhere
	\]
\end{proof}

\section*{Acknowledgements}
The authors would like to thank G.~Mingione, P.~Mironescu, S.~Mosconi, B.~Raita, D.~Spector and J.~Verdera for stimulating discussions.
We also thank the referee whose comments helped us improve the presentation.
L.~Ambrosio was supported by the MIUR PRIN 2015 project ``Calculus of Variations''.
A.~C. Ponce was supported by the Fonds de la Recherche scientifique (F.R.S.--FNRS) under the Crédit de recherche J.0020.18., ``Local and nonlocal problems involving Sobolev functions''.
R.~Rodiac was supported by the F.R.S.--FNRS under the Mandat d’Impulsion scientifique F.4523.17, ``Topological singularities of Sobolev maps''.

\bibliographystyle{plain}
\begin{bibdiv}
\begin{biblist}

\bib{Alberti}{article}{
      author={Alberti, Giovanni},
       title={A Lusin type property of gradients},
        date={1991},
     journal={J. Funct. Anal.},
      volume={100},
       pages={110\ndash 118},
}

\bib{AlbertiBianchiniCrippa2014a}{article}{
      author={Alberti, Giovanni},
      author={Bianchini, Stefano},
      author={Crippa, Gianluca},
       title={On the {$L^p$}-differentiability of certain classes of
  functions},
        date={2014},
     journal={Rev. Mat. Iberoam.},
      volume={30},
       pages={349\ndash 367},
}

\bib{AFP}{book}{
      author={Ambrosio, Luigi},
      author={Fusco, Nicola},
      author={Pallara, Diego},
       title={Functions of bounded variation and free discontinuity problems},
      series={Oxford Mathematical Monographs},
   publisher={Oxford University Press},
       place={Oxford},
        date={2000},
}

\bib{BenilanBrezisCrandall}{article}{
   author={B\'enilan, {Ph}ilippe},
   author={Brezis, Haim},
   author={Crandall, Michael G.},
   title={A semilinear equation in $L^{1}(\R^{N})$},
   journal={Ann. Scuola Norm. Sup. Pisa Cl. Sci. (4)},
   volume={2},
   date={1975},
   %~ number={4},
   pages={523--555},
   %~ review={\MR{0390473}},
}

\bib{CaffarelliSalazar:2002}{article}{
   author={Caffarelli, Luis},
   author={Salazar, Jorge},
   title={Solutions of fully nonlinear elliptic equations with patches of
   zero gradient: existence, regularity and convexity of level curves},
   journal={Trans. Amer. Math. Soc.},
   volume={354},
   date={2002},
   %~ number={8},
   pages={3095--3115},
   %~ issn={0002-9947},
   %~ review={\MR{1897393}},
   %~ doi={10.1090/S0002-9947-02-03009-X},
}

\bib{CaffarelliSalazarShahgholian:2004}{article}{
   author={Caffarelli, Luis},
   author={Salazar, Jorge},
   author={Shahgholian, Henrik},
   title={Free-boundary regularity for a problem arising in
   superconductivity},
   journal={Arch. Ration. Mech. Anal.},
   volume={171},
   date={2004},
   %~ number={1},
   pages={115--128},
   %~ issn={0003-9527},
   %~ review={\MR{2029533}},
   %~ doi={10.1007/s00205-003-0287-0},
}

\bib{CalderonZygmund}{article}{
      author={Calderón, A.~P.},
      author={Zygmund, A.},
       title={On the existence of certain singular integrals},
        date={1952},
     journal={Acta Math.},
      volume={88},
       pages={85\ndash 139},
}

\bib{CalderonZygmund1961}{article}{
      author={Calderón, A.~P.},
      author={Zygmund, A.},
   title={Local properties of solutions of elliptic partial differential
   equations},
   journal={Studia Math.},
   volume={20},
   date={1961},
   pages={171--225},
   %~ issn={0039-3223},
   %~ review={\MR{0136849}},
   %~ doi={10.4064/sm-20-2-181-225},
}

\bib{CalderonZygmund1962}{article}{
      author={Calderón, A.~P.},
      author={Zygmund, A.},
   title={On the differentiability of functions which are of bounded
   variation in Tonelli's sense},
   journal={Rev. Un. Mat. Argentina},
   volume={20},
   date={1962},
   pages={102--121},
   %~ issn={0041-6932},
   %~ review={\MR{0151557}},
}

\bib{CufiVerdera}{article}{
   author={Cuf\'\i , Juli\`a},
   author={Verdera, Joan},
   title={Differentiability properties of Riesz potentials of finite measures and non-doubling Calderón-Zygmund theory},
   journal={Ann. Scuola Norm. Sup. Pisa (5)},
   volume={18},
   date={2018},
   pages={1081--1123},
}

\bib{DeVoreSharpley}{article}{
   author={DeVore, Ronald A.},
   author={Sharpley, Robert C.},
   title={Maximal functions measuring smoothness},
   journal={Mem. Amer. Math. Soc.},
   volume={47},
   date={1984},
   number={293},
   pages={viii+115},
   %~ pages={viii+115},
   %~ issn={0065-9266},
   %~ review={\MR{727820}},
   %~ doi={10.1090/memo/0293},
}
\bib{EvansGariepy}{book}{
      author={Evans, Lawrence~C.},
      author={Gariepy, Ronald~F.},
       title={Measure theory and fine properties of functions},
      series={Textbooks in Mathematics},
   publisher={CRC Press, Boca Raton, FL},
        date={2015},
}

\bib{FrankLieb}{article}{
     author = {Frank, Rupert},
     author = {Lieb, Elliott},
      title = {A ``liquid-solid'' phase transition in a simple model for swarming, based on the ``no flat-spots'' theorem for subharmonic functions},
    journal = {Indiana Univ. Math. J.},
     volume = {67},
       year = {2018},
     %~ number = {4},
      pages = {1547--1569},
}	

\bib{GmeinederRaita}{article}{
	author={Gmeineder, Franz},
	author={Raita, Bogdan},
	title={On critical \(L^{p}\)-differentiability of BD-maps},
	note={To appear in Rev. Mat. Iberoam},
}

\bib{Grafakos}{book}{
   author={Grafakos, Loukas},
   title={Classical Fourier analysis},
   series={Graduate Texts in Mathematics},
   volume={249},
   edition={3},
   publisher={Springer, New York},
   date={2014},
   %~ pages={xviii+638},
   %~ isbn={978-1-4939-1193-6},
   %~ isbn={978-1-4939-1194-3},
   %~ review={\MR{3243734}},
   %~ doi={10.1007/978-1-4939-1194-3},
}

\bib{Hajlasz1996}{article}{
      author={Haj{\l}asz, Piotr},
       title={On approximate differentiability of functions with bounded
  deformation},
        date={1996},
     journal={Manuscripta Math.},
      volume={91},
       pages={61\ndash 72},
}

\bib{Ishii1995}{article}{
      author={Ishii, Hitoshi},
       title={On the equivalence of two notions of weak solutions, viscosity
  solutions and distribution solutions},
        date={1995},
     journal={Funkcial. Ekvac.},
      volume={38},
       pages={101\ndash 120},
}

\bib{JuutinenLindqvistManfredi2001}{article}{
      author={Juutinen, Petri},
      author={Lindqvist, Peter},
      author={Manfredi, Juan~J.},
       title={On the equivalence of viscosity solutions and weak solutions for
  a quasi-linear equation},
        date={2001},
     journal={SIAM J. Math. Anal.},
      volume={33},
       pages={699\ndash 717},
}

\bib{KuusiMingione}{article}{
   author={Kuusi, Tuomo},
   author={Mingione, Giuseppe},
   title={Guide to nonlinear potential estimates},
   journal={Bull. Math. Sci.},
   volume={4},
   date={2014},
   %~ number={1},
   pages={1--82},
   %~ issn={1664-3607},
   %~ review={\MR{3174278}},
   %~ doi={10.1007/s13373-013-0048-9},
}

\bib{Lions1983}{article}{
      author={Lions, P.-L.},
       title={Optimal control of diffusion processes and
  {H}amilton-{J}acobi-{B}ellman equations. {II}. {V}iscosity solutions and
  uniqueness},
        date={1983},
     journal={Comm. Partial Differential Equations},
      volume={8},
       pages={1229\ndash 1276},
}

\bib{LittmanStampacchiaWeinberger1963}{article}{
   author={Littman, W.},
   author={Stampacchia, G.},
   author={Weinberger, H. F.},
   title={Regular points for elliptic equations with discontinuous
   coefficients},
   journal={Ann. Scuola Norm. Sup. Pisa (3)},
   volume={17},
   date={1963},
   pages={43--77},
   %~ review={\MR{0161019}},
}

\bib{MaranoMosconi2015}{article}{
   author={Marano, Salvatore A.},
   author={Mosconi, Sunra J. N.},
   title={Multiple solutions to elliptic inclusions via critical point
   theory on closed convex sets},
   journal={Discrete Contin. Dyn. Syst.},
   volume={35},
   date={2015},
   %~ number={7},
   pages={3087--3102},
   %~ issn={1078-0947},
   %~ review={\MR{3343555}},
   %~ doi={10.3934/dcds.2015.35.3087},
}

\bib{Ponce2016}{book}{
      author={Ponce, Augusto~C.},
       title={Elliptic {PDE}s, measures and capacities. from the {P}oisson
  equations to nonlinear {T}homas-{F}ermi problems},
      series={EMS Tracts in Mathematics},
   publisher={European Mathematical Society (EMS), Z\"urich},
        date={2016},
      volume={23},
}

\bib{Raita2017}{article}{
author={Raita, Bogdan},
title={Critical $L^p$-differentiability of $BV^\mathbb{A}$-maps and canceling operators},
eprint={arXiv:1712.01251},
}

\bib{Rodiac2018}{article}{
      author={Rodiac, R\'emy},
       title={Description of limiting vorticities for the magnetic {2D
  Ginzburg-Landau} equations},
      note={To appear in Ann. Inst. H. Poincar\'e Anal. Non Lin\'eaire},
}

\bib{SandierSerfaty2003}{article}{
      author={Sandier, Etienne},
      author={Serfaty, Sylvia},
       title={Limiting vorticities for the {G}inzburg-{L}andau equations},
        date={2003},
     journal={Duke Math. J.},
      volume={117},
       pages={403\ndash 446},
}

\bib{SandierSerfaty}{book}{
      author={Sandier, Etienne},
      author={Serfaty, Sylvia},
       title={Vortices in the magnetic {G}inzburg-{L}andau model},
      series={Progress in Nonlinear Differential Equations and their
  Applications},
   publisher={Birkh\"auser, Boston, MA},
        date={2007},
      volume={70},
}

\bib{Stein1970}{book}{
      author={Stein, Elias~M.},
       title={Singular integrals and differentiability properties of
  functions},
      series={Princeton Mathematical Series, No.~30},
   publisher={Princeton University Press, Princeton, NJ},
        date={1970},
}

\bib{SteinShakarchi4}{book}{
      author={Stein, Elias~M.},
      author={Shakarchi, Rami},
       title={Functional analysis},
      series={Princeton Lectures in Analysis},
   publisher={Princeton University Press, Princeton, NJ},
        date={2011},
      volume={4},
}

\bib{Verdera}{article}{
author={Verdera, J.},
title={Capacitary differentiability of potentials of finite Radon measures},
eprint={arXiv:1812.11419},
}

\end{biblist}
\end{bibdiv}

\end{document}